\documentclass[smallextended]{svjour3}

\smartqed
\usepackage{amssymb,amsxtra}
\usepackage{dsfont,latexsym,enumerate}
\usepackage{url}
\usepackage{mathrsfs}

\usepackage{color}
\definecolor{citation}{rgb}{0.2,0.6,0.2}
\definecolor{formula}{rgb}{0.1,0.2,0.6}
\definecolor{url}{rgb}{0,0,0.4}
\usepackage[colorlinks=true,linkcolor=formula,urlcolor=url,citecolor=citation]{hyperref}

\textwidth = 13.5 cm
\textheight = 21.2 cm  
\oddsidemargin = 1.3cm
\evensidemargin = 1.3cm 
\topmargin = -1mm
\headheight = 6mm
\headsep = 10mm
%
%

\newlength{\defbaselineskip}
\setlength{\defbaselineskip}{\baselineskip}

\usepackage{amsthm}

\numberwithin{equation}{section}
\setlength{\delimitershortfall}{-0.1pt}
\allowdisplaybreaks[4]

\def\vs{\vspace{1mm}}
\def\dd{d_{\textrm{o}}}

\allowdisplaybreaks
\makeatletter
\DeclareRobustCommand*{\bfseries}{%
	\not@math@alphabet\bfseries\mathbf
	\fontseries\bfdefault\selectfont
	\boldmath
}

\def \e{{\varepsilon}}
\def \p{{\varphi}}
\def \l{{\mathcal{L}}}
\def \r{{\mathds{R}}}
\def \h{{\mathds{H}}}
\def \c{c}
\def\Xint#1{\mathchoice
	{\XXint\displaystyle\textstyle{#1}}%
	{\XXint\textstyle\scriptstyle{#1}}%
	{\XXint\scriptstyle\scriptscriptstyle{#1}}%
	{\XXint\scriptscriptstyle\scriptscriptstyle{#1}} %
	\!\int}
\def\XXint#1#2#3{{\setbox0=\hbox{$#1{#2#3}{\int}$}
		\vcenter{\hbox{$#2#3$}}\kern-.5\wd0}}

\def\dashint{\Xint-}

\def\vs{\vspace{1mm}}
\def\dxieta{\,{\rm d}\xi{\rm d}\eta}
\def\deta{\,{\rm d}\eta}
\def\dxi{\,{\rm d}\xi}
\def\dd{d_{\textrm{o}}}
\allowdisplaybreaks
\makeatletter
\DeclareRobustCommand*{\bfseries}{%
	\not@math@alphabet\bfseries\mathbf
	\fontseries\bfdefault\selectfont
	\boldmath
}

\DeclareMathOperator*{\osc}{osc}
\DeclareMathOperator*{\dist}{dist}
\DeclareMathOperator*{\essinf}{ess \, \inf}
\DeclareMathOperator*{\esssup}{ess \, \sup}
\makeatother

\makeatletter
\makeatother

\begin{document}
\title{The obstacle problem and the Perron Method for nonlinear fractional equations in the Heisenberg group\thanks{The author is member of~\,``Gruppo Nazionale per l'Analisi Matematica, la Probabilit\`a e le loro Applicazioni (GNAMPA)'' of Istituto Nazionale di Alta Matematica (INdAM), with the support of the project ``Fenomeni non locali in problemi locali'', CUP\!\char`_E55F22000270001\,.}}

    \author{Mirco Piccinini}

\institute{M. Piccinini \at 
         Dipartimento di Scienze Matematiche, Fisiche e Informatiche,\\
          Universit\`a di Parma\\
          Parco Area delle Scienze 53/a, Campus, 43124 Parma, Italy\\
          \email{mirco.piccinini@unipr.it}
          }

\titlerunning{Nonlinear fractional equations in~$\h^n$}
\maketitle
\begin{abstract}
	We study the obstacle problem related to a wide class of nonlinear integro-differential operators, whose model is the fractional subLaplacian in the Heisenberg group. We prove both the existence and uniqueness of the solution, and that solutions inherit regularity properties of the obstacle such as boundedness, continuity and H\"older continuity up to the boundary. We also prove some independent properties of weak supersolutions to the class of problems we are dealing with. Armed with the aforementioned results, we finally investigate the Perron-Wiener-Brelot generalized solution by proving its existence for very general boundary data.

    \keywords{Nonlinear nonlocal operators \and Heisenberg group \and obstacle problem \and Perron method \and fractional subLaplacian.}

   \subclass{35A01 \and 35B45 \and  35R03 \and 47G20 \and 35R11 \and 31D05}

\end{abstract}
	
	\setcounter{tocdepth}{2}
	\tableofcontents

\setcounter{equation}{0}\setcounter{theorem}{0}

   \section{Introduction}
   In the present paper we investigate a class of obstacle problems where the leading operator~$\l$ is given by
   \begin{equation}
   	\label{operatore}
   	\l u (\xi) := P.~\!V. \int_{\h^n}\frac{|u(\xi)-u(\eta)|^{p-2}(u(\xi)-u(\eta))}{\dd(\eta^{-1}\circ \xi)^{Q+sp}}\,{\rm d}\eta, \qquad \xi \in \h^n,
   \end{equation}
    with~$\dd$ being a homogeneous norm on the Heisenberg group~$\h^n$, in accordance with forthcoming~Definition~\ref{def_homnorm}, and~$Q$ being the homogeneous dimension of~$\h^n$. 
    \vspace{1mm}
   
    In order to state our main results about the obstacle problem related to~$\l$ some further notation is needed. Let~$\Omega \Subset \Omega'$ be bounded open sets,~$h : \h^n \rightarrow [-\infty,\infty)$ be an extended real-valued function, called the {\it obstacle}, and~$g \in W^{s,p}(\h^n)$. Define the class 
    \begin{equation}\label{obst class}
   	\mathcal{K}_{g,h}(\Omega,\Omega') := \big\{ u \in W^{s,p}(\Omega'): u \geq h \mbox{ a.~\!e. in } \Omega, \, u=g \mbox{ a.~\!e. in } \h^n \smallsetminus \Omega\big\}.
    \end{equation}
    and consider the functional~$\mathbf{A}: \mathcal{K}_{g,h}(\Omega,\Omega') \rightarrow [W^{s,p}(\Omega')]'$ given by
    \begin{eqnarray}\label{A}
    \langle\mathbf{A}(u),v\rangle &:=&  \int_{\h^n}\int_{\h^n}\frac{|u(\xi)-u(\eta)|^{p-2}(u(\xi)-u(\eta)) (v(\xi)-v(\eta))}{\dd(\eta^{-1}\circ \xi)^{Q+sp}} \dxieta\\*
    && - \int_{\Omega} f(\xi,u)v(\xi) \dxi. \nonumber
    \end{eqnarray}
     where the function~$f:\h^n \times \r \rightarrow \r$ satisfies
    \begin{align} 
    	&   f (\cdot,x)\ \mbox{is measurable  for all~$x \in \r$ and} \ f(\xi,\cdot) \ \mbox{is continuous for a.~\!e.~$\xi \in \h^n$;} \label{reg f} \\
    	&	f \equiv f(\cdot,x) \in L^\infty_{\rm loc}(\h^n), \qquad \mbox{for any}~x\in \r, \ \mbox{uniformly in~$\Omega$;}\label{bound f}\\
    	&	\left(f(\xi,x_1)-f(\xi,x_2)\right)(x_1-x_2) \leq 0, \qquad \forall x_1,x_2 \in \r \ \mbox{and for a.~\!e.~}\xi \in \h^n. \label{monotonicity}
    \end{align}
   	We say that~$u \in \mathcal{K}_{g,h}(\Omega,\Omega')$ is a {\it solution to the obstacle problem in~$\mathcal{K}_{g,h}(\Omega,\Omega')$} if
   	$$
   	\langle\mathbf{A}(u),v-u \rangle \geq 0, \qquad \forall v \in \mathcal{K}_{g,h}(\Omega,\Omega').
   	$$
   Our first main result concerns existence and uniqueness.
   \begin{theorem}[{\bf Existence and uniqueness}]\label{t1 obst existence}
   	Under conditions~\eqref{reg f},~\eqref{bound f} and~\eqref{monotonicity}, if the class~$\mathcal{K}_{g,h}(\Omega,\Omega')$ is not empty, then there exists a unique solution to the obstacle problem in~$\mathcal{K}_{g,h}(\Omega,\Omega')$.
    \end{theorem}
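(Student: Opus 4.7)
The plan is to recast the variational inequality as an abstract problem for a monotone, hemicontinuous and coercive operator on a closed convex subset of a reflexive Banach space, and then invoke the Browder--Minty--Stampacchia existence theorem for variational inequalities (see e.g.\ Kinderlehrer--Stampacchia). Since $\mathcal{K}_{g,h}(\Omega,\Omega')$ is an affine constraint set rather than a linear subspace, I would first fix an arbitrary $u_0 \in \mathcal{K}_{g,h}(\Omega,\Omega')$ (available by the nonemptiness hypothesis) and translate, so that the problem becomes a VI for $u \mapsto \mathbf{A}(u_0 + u)$ on the closed convex set $\mathcal{K}_{g,h} - u_0$, which lies in the reflexive separable Banach space of $W^{s,p}(\Omega')$ functions vanishing a.e. in $\h^n\setminus\Omega$.

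The first routine step is to check that $\mathcal{K}_{g,h}(\Omega,\Omega')$ is closed and convex in $W^{s,p}(\Omega')$: convexity is immediate from the pointwise character of the constraints $u\geq h$ and $u=g$, while closedness follows from extraction of an a.e.-convergent subsequence from any $W^{s,p}$-convergent one. One then has to verify that $\mathbf{A}$ lands in $[W^{s,p}(\Omega')]'$: the double integral is handled by H\"older's inequality after decomposing the domain of integration into $\Omega'\times\Omega'$ and its complement, controlling the tail via the fact that $u=g\in W^{s,p}(\h^n)$ outside $\Omega$; the $f$-contribution is controlled by~\eqref{bound f} together with a Sobolev embedding in the Heisenberg setting. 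Monotonicity of $\mathbf{A}$ then comes from two ingredients: for the kernel part, the standard algebraic inequality $(|a|^{p-2}a-|b|^{p-2}b)(a-b)\geq c_p|a-b|^p$ for $p\geq 2$ (with the usual quantitative substitute when $1<p<2$); for the lower order part, condition~\eqref{monotonicity} gives directly $-\!\int_\Omega(f(\cdot,u)-f(\cdot,v))(u-v)\dxi\geq 0$. Hemicontinuity of $t\mapsto\langle\mathbf{A}(u+tv),w\rangle$ follows from dominated convergence, using continuity of $t\mapsto|t|^{p-2}t$ and of $f(\xi,\cdot)$ in~\eqref{reg f}, together with the H\"older/tail dominations already established.

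The delicate step is coercivity on $\mathcal{K}_{g,h}$: I would prove that $\langle\mathbf{A}(u),u-u_0\rangle/\|u-u_0\|_{W^{s,p}(\Omega')}\to\infty$ along sequences in $\mathcal{K}_{g,h}$ with $\|u\|_{W^{s,p}(\Omega')}\to\infty$. The nonlocal leading term is bounded below by $[u-u_0]_{s,p}^p$ (up to lower order tail terms, which can be absorbed using $g\in W^{s,p}(\h^n)$), while the $f$-contribution is at most linear in $u$ thanks to~\eqref{bound f} and boundedness of $\Omega$. Since $u-u_0$ vanishes outside $\Omega$, a fractional Poincar\'e-type inequality on $\h^n$ converts control of the Gagliardo seminorm $[u-u_0]_{s,p}$ into control of $\|u-u_0\|_{L^p}$, giving the required coercivity.

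With monotonicity, hemicontinuity and coercivity in hand, the abstract theorem delivers a solution $u\in\mathcal{K}_{g,h}$. Uniqueness is then the standard byproduct of strict monotonicity of the leading operator: if $u_1,u_2$ are both solutions, testing the inequality for $u_1$ with $v=u_2$ and vice versa and summing gives $\langle\mathbf{A}(u_1)-\mathbf{A}(u_2),u_1-u_2\rangle\leq 0$, while the monotonicity estimate yields the opposite inequality up to the $p$-seminorm of $u_1-u_2$; since $u_1-u_2\equiv 0$ on $\h^n\setminus\Omega$, the Poincar\'e inequality promotes the vanishing of the seminorm to $u_1=u_2$ a.e. The principal obstacle I expect is the coercivity step: the nonlocal nature of $\l$ forces a careful handling of the tail terms generated by the fixed datum $g$, and all estimates must be compatible with the affine (rather than linear) character of the constraint set $\mathcal{K}_{g,h}(\Omega,\Omega')$.
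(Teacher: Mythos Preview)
Your proposal is correct and follows essentially the same route as the paper: both arguments verify that $\mathbf{A}$ is monotone (via the standard algebraic inequality for the kernel part and condition~\eqref{monotonicity} for the $f$-part), continuous in an appropriate sense, and coercive on $\mathcal{K}_{g,h}(\Omega,\Omega')$, and then invoke the abstract existence theory for variational inequalities on closed convex sets (the paper refers to~\cite{KKP16} for the final step). The only notable difference is that the paper verifies strong-to-weak$^*$ continuity of $\mathbf{A}$ (i.e.\ $u_j\to u$ in $W^{s,p}(\Omega')$ implies $\langle\mathbf{A}(u_j)-\mathbf{A}(u),v\rangle\to 0$ for all $v$), splitting $\mathbf{A}=\mathbf{A}_1+\mathbf{A}_2$ and using Lemma~\ref{s1 lem2} to handle the local and tail pieces separately, whereas you opt for the weaker hemicontinuity via dominated convergence; both suffice for the Browder--Minty--Stampacchia machinery, and your choice is arguably more economical.
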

    Moreover, if one considers the Dirichlet problem related to~$\l$ in~\eqref{operatore} in~$\Omega$; i.~\!e.,
      \begin{equation}\label{problema2}
    	   	\begin{cases}
    		   		\l u   = f & \text{in} \ \Omega,\\[0.5ex]
    		   		u   =g & \text{in} \ \h^n \smallsetminus \Omega,
    		   	\end{cases}
    	   \end{equation}
    then, as immediate consequence of the previous theorem, we have
    \begin{corol}\label{sec3.1_corol1}
    	Let~$u$ be the solution to the obstacle problem in~$\mathcal{K}_{g,h}(\Omega,\Omega')$. Then~$u$ is weak supersolution to~\eqref{problema2} in~$\Omega$, according to Definition~{\rm \ref{solution to inhomo pbm}}.
    	
    	Moreover, if~$B_r \equiv B_r(\xi_0)\subset \Omega$ is such that
    	$$
    	\essinf_{B_r}(u-h) >0,
    	$$
    	then~$u$ is a weak solution to~\eqref{problema2} in~$B_r$. In particular, if~$u$ is lower semicontinuous and~$h$ is upper semicontinuous in~$\Omega$, then~$u$ is a weak solution to~\eqref{problema2} in~$\{u >h\} \cap \Omega$.
    \end{corol}

    Our second main result concerns the regularity (up to the boundary) of solutions to the obstacle problem in~$\mathcal{K}_{g,h}(\Omega,\Omega')$.~In the same spirit of regularity results up to the boundary for nonlocal equation, see~\cite{KKP16,KKP17}, a natural measure theoretic condition on the boundary of~$\Omega$ has to be assumed; that is, there exists~$\delta_\Omega \in (0,1)$ and~$r_0>0$ such that for every~$\xi_0 \in \partial \Omega$ 
    \begin{equation}\label{condizione misura}
   	\inf_{r \in (0,r_0)} \frac{|(\h^n \smallsetminus \Omega)\cap B_r(\xi_0)|}{|B_r(\xi_0)|} \geq \delta_\Omega.
    \end{equation}
    We have
   \begin{theorem}[{\bf Regularity up to the boundary}]\label{boundary reg}
   	Let~$s \in (0,1)$, $p \in (1,\infty)$ and, under assumptions~~\eqref{reg f}, \eqref{bound f} and~\eqref{monotonicity}, let~$u$ solves the obstacle problem in~$\mathcal{K}_{g,h}(\Omega,\Omega')$ with~$g \in \mathcal{K}_{g,h}(\Omega,\Omega')$ and $\Omega$ satisfying condition~\eqref{condizione misura}. If~$g$ is locally H\"older continuous (resp. continuous) in~$\Omega'$ and~$h$ is locally H\"older continuous (rips. continuous) in~$\Omega$ or~$h \equiv -\infty$, then~$u$ is locally H\"older continuous (resp. continuous) in~$\Omega'$.
   \end{theorem}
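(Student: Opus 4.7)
The plan is to establish H\"older continuity (resp. continuity) of $u$ throughout $\Omega'$ by treating three regions separately: the open set $\Omega' \smallsetminus \overline{\Omega}$, where $u \equiv g$ so the conclusion follows for free from the regularity of $g$; the interior of $\Omega$, where an obstacle-problem interior regularity argument applies; and the boundary $\partial \Omega$, which is the delicate part and will require the measure-density condition~\eqref{condizione misura}. The goal is then to glue the three moduli together.

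For the interior, I would exploit Corollary~\ref{sec3.1_corol1}: on $\{u>h\}\cap \Omega$ the function $u$ is a weak solution of~\eqref{problema2}, and thus by the (already established) interior H\"older/continuity theory for weak solutions of fractional $p$-Laplace type equations in~$\h^n$ driven by $\l$ under~\eqref{reg f}--\eqref{bound f}, the function $u$ is locally H\"older (resp. continuous) there. On the coincidence set $\{u=h\}$, $u$ inherits the modulus of continuity of $h$ pointwise from below, while from above one compares $u$ with the (continuous) solution of the Dirichlet problem on small non-contact components. A standard two-sided bound then glues these moduli into interior H\"older (resp. continuous) regularity in $\Omega$; in the degenerate case $h\equiv-\infty$ the contact set is empty and only the PDE estimate is needed.

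For boundary regularity, fix $\xi_0 \in \partial \Omega$ and consider balls $B_r = B_r(\xi_0)$ with $r < r_0$. The objective is to derive the oscillation decay
\begin{equation*}
\osc_{B_r \cap \Omega'} u \ \leq \ C r^\alpha \qquad \text{(H\"older case)},
\end{equation*}
or the qualitative analogue $\osc_{B_r\cap \Omega'} u \to 0$ as $r \to 0^+$ for the continuity case. Since $u \equiv g$ on $\h^n \smallsetminus \Omega$ and since, by~\eqref{condizione misura}, $|B_r \smallsetminus \Omega| \geq \delta_\Omega |B_r|$, the values of $u$ on a positive fraction of $B_r$ are pinned to those of $g$. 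The strategy is then to combine the supersolution property of $u$ (Corollary~\ref{sec3.1_corol1}), a nonlocal Caccioppoli inequality for~$\l$ on Heisenberg balls, and a De Giorgi-type iteration on the level sets $\{u < k\}\cap B_r$ and $\{u > k\}\cap B_r$, with tail terms governed by the kernel $\dd(\eta^{-1}\circ \xi)^{-Q-sp}$. This produces the usual dichotomy: either $u$ separates quantitatively from the boundary datum in a smaller ball, or a large portion of $B_r$ satisfies a measure-theoretic inequality that forces the sought reduction of oscillation. The obstacle enters the lower iteration only through $u \geq h$, which is harmless because $h$ carries the same modulus.

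The main obstacle will be the boundary step, and specifically the control of the nonlocal tail after truncating $u$ and subtracting the boundary datum $g$: the Heisenberg geometry forces one to measure these tails in the homogeneous norm $\dd$ and to iterate on $\h^n$-balls, so a careful verification that the Euclidean scheme of~\cite{KKP16,KKP17} transfers via the available fractional Caccioppoli and logarithmic estimates in~$\h^n$ is required. In addition, the nonlinear inhomogeneity $f(\xi,u)$ under~\eqref{reg f}--\eqref{bound f} must be absorbed as a bounded perturbation compatible with the supersolution iteration, and the mixed exterior/obstacle contributions in the level-set argument must be kept separate to avoid losing the quantitative decay rate.
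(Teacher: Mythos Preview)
Your three-region decomposition (exterior, interior, boundary) matches the paper's architecture, and the boundary sketch is in the right spirit. Two points deserve correction.

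\textbf{Interior at contact points.} Your plan ``on the coincidence set $u$ inherits the modulus of $h$ from below, while from above one compares $u$ with the Dirichlet solution on small non-contact components, then glues'' does not work as stated. A contact point $\xi_0$ need not be isolated from other contact points, and there is no reason for a non-contact component to exist in every small ball around $\xi_0$; even when such components exist, a comparison with the free Dirichlet solution there gives no uniform control on $\osc_{B_\varrho(\xi_0)} u$ as $\varrho\to 0$. The paper's argument is different and avoids any decomposition of the contact set: at a contact point it shifts $u$ by $d:=h(\xi_0)-\omega_h(r)$ so that $u_d:=u-d\ge 0$ on $B_r$, applies the local boundedness estimate for the obstacle problem to bound $\sup_{B_\varrho}u_d$, and then invokes the \emph{weak Harnack inequality} (Theorem~\ref{weak harnack}) for the nonnegative supersolution $u_d$ together with $\inf_{B_\varrho}u_d\le \omega_h(r)$ to obtain the oscillation reduction~\eqref{sec4_thm2_e1}. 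You should replace the comparison idea by this supersolution/weak-Harnack step; it is the missing ingredient.

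\textbf{Boundary step.} Your dichotomy via a De~Giorgi iteration is correct in outline, but the mechanism that turns the measure-density condition~\eqref{condizione misura} into oscillation decay in the paper is the \emph{logarithmic estimate} (Lemma~\ref{sec5_lem3}) for the truncated functions $w_\pm$, combined with the Poincar\'e-type inequality of Lemma~\ref{sec5.1_lem1} on $B_r\smallsetminus\Omega$ and Chebyshev to produce a density bound for $\{u\ge \sup_B u-4\vartheta\omega\}$; only then does the boundedness estimate close the iteration (Lemma~\ref{sec5.1_lem2}). A plain Caccioppoli/De~Giorgi level-set iteration gives $L^\infty$ bounds but not the reduction of oscillation without this logarithmic step, so you should name it explicitly. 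The handling of $f(\xi,u)$ as a bounded perturbation and of the tail via the homogeneous norm is exactly as you anticipate.
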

   
   Let us note that if we assume the obstacle~$h \equiv -\infty$, from Corollary~\ref{sec3.1_corol1}, we can see the theorem above as a boundary regularity result for weak solution to~\eqref{problema2}, which completes the properties obtained in~\cite{MPPP21}.
   
   \vspace{2mm}
   Theorems~\ref{t1 obst existence} and~\ref{boundary reg} extend to the non-homogeneous and non-Euclidean setting the results obtained by Korvenp\"a\"a, Kuusi  and Palatucci in~\cite{KKP16} for the obstacle problem related to a class of integro-differential operators whose model is the fractional $p$-Laplacian. As well as they extend to the fractional setting the regularity properties obtained for the obstacle problem in Carnot groups; see for instance~\cite{DGP07,DGS03,PV13} and the references therein. Moreover, as shown in~\cite{KKP16}, solutions to this class of problems inherit up to the boundary the regularity of the obstacle both in the case of local boundedness, continuity and H\"older continuity. It is however worth mentioning that our functional and involved geometrical settings are different than the ones considered in the aforementioned papers. Indeed, since we consider the non-homogeneous case and the function~$f \equiv f(\cdot,u)$ depends on the solution~$u$ itself, we have to take into account the monotonicity assumption in~\eqref{monotonicity} in order to extend the theory presented in~\cite{KKP16}. In particular,~\eqref{monotonicity} is needed in the proof of Theorem~\ref{t1 obst existence} to show that the operator~$\mathbf{A}$, defined in~\eqref{A}, is monotone (e.~\!g.~satisfies condition~\eqref{A monotone}).
  As for conditions~\eqref{reg f} and~\eqref{bound f}, they are crucial in showing that some regularity properties of weak solutions to~\eqref{problema2} such as boundedness, H\"older continuity and Harnack inequality hold true; see the recent papers~\cite{MPPP21,PP21}. In~\cite{PP21} it is proven a generalization of the nonlocal Harnack inequality obtained by Ferrari and Franchi in~\cite{FF15} for the linear case when~$p=2$ via the celebrated Caffarelli-Silvestre extension (see~\cite{CS07}), which in the more general nonlinear framework (when~$p\neq 2$) is not applicable.
   
   \vspace{2mm} 
   Armed with the results in Theorem~\ref{t1 obst existence} and~\ref{boundary reg} we are able to extend the classical Perron Method for the Dirichlet problem~\eqref{problema2}. As well known, this method is a classical procedure in Potential Theory which provides the existence of a solution for the Dirichlet problem related to the Laplace equation in an arbitrary open set~$\Omega \subset \r^n$ and for any boundary datum~$g$, without "regularity assumption". It simply works by finding the least superharmonic function greater or equal to the datum~$g$ on the boundary~$\partial \Omega$ and it can be applied once knowing some helpful properties of superharmonic functions, such as comparison and maximum principles, and some barriers techniques. Some generalization of the Perron method are available in the literature when one considers instead of the classical Laplace operator its nonlinear counterpart. In this sense, we refer the reader to the paper~\cite{GLM86} by Granlund, Lindqvist and Martio and to the monograph~\cite{HKM06} by Heinonen, Kilple\"ainen and Martio and the references therein, for the study of more general nonlinear operators, whose prototype is the $p$-Laplacian.
   
   \vspace{2mm}
   Recently, more and more attention has been focused on the study of nonlocal operators and their related fractional Sobolev spaces. For a deep analysis of the Dirichlet problem related to nonlocal and nonlinear operators whose model is the fractional $p$-Laplacian we recall the papers~\cite{LL16} by Lindgren and Lindqvist,~\cite{KKP17} by Korvenp\"a\"a, Kuusi and Palatucci (where they prove the existence of the generalized solution in the sense of Perron) and the recent one~\cite{Mou17} by Mou. For a brief introduction to fractional Sobolev spaces in the Heisenberg group we refer the interested reader to the works of Adimurthi and Mallick~\cite{AM18}, where the classical Sobolev and Morrey embeddings are proven for the fractional functional setting, the paper~\cite{WD20} for some properties of solutions to the nonlinear fractional Laplacian on~$\h^n$, and the papers~\cite{KS18,KS21} by Kassymov and Surgan. In particular, in~\cite{KS21} the authors prove an existence result for the Dirichlet problem using the Mountain Pass theorem and the Folland-Stein embedding theorem as main tools. Moreover,it is worth mentioning~\cite{FMPPS18}, where the authors proved, via semigroup theory, an asymptotic behaviour of fractional subLaplacians on Carnot groups when the differentiability exponent~$s \nearrow 1$. An analogous result has been obtained in~\cite{PP21} via Taylor polynomials for the particular case of the Heisenberg group and in the same paper has been used to prove a stability property of the Harnack inequality when~$s$ goes to~$1$.
   
   We aim to extend the existence results for the generalized Perron solution proved in~\cite{LL16,KKP17} for nonlocal and nonlinear operators in the Heisenberg group.
  
   \begin{theorem}[{\bf Resolutivity}]\label{perron}
   	Let~$s \in (0,1)$ and~$p \in (1,\infty)$. Then, under assumptions~~\eqref{reg f}, \eqref{bound f} and~\eqref{monotonicity}, it holds that $\overline{H}_g = \underline{H}_g=H_g$, with~$\overline{H}_g,\underline{H}_g$ defined in Definition~{\rm \ref{perron classes and solutions}}.
   	
   	Moreover,~$H_g$ is a continuous weak solution in~$\Omega$ to problem~\eqref{problema2}, according to Definition~{\rm \ref{solution to inhomo pbm}}.
   \end{theorem}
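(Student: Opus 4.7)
The plan is to adapt the Perron-method strategy of Korvenp\"a\"a, Kuusi and Palatucci \cite{KKP17} to the Heisenberg setting, using Theorems \ref{t1 obst existence} and \ref{boundary reg} as the solvability and boundary-regularity inputs that in \cite{KKP17} come from the Euclidean fractional $p$-Laplacian theory. Two general tools are required at the outset: a comparison principle between weak sub- and supersolutions of \eqref{problema2}, obtained by testing the weak formulation with $(u-v)_+$ and absorbing the $f$-terms via the monotonicity hypothesis \eqref{monotonicity}; and a stability statement for the Perron functionals under monotone convergence of the boundary datum, namely $\overline{H}_{g_k} \downarrow \overline{H}_g$ (resp.\ $\underline{H}_{g_k}\uparrow \underline{H}_g$) whenever $g_k \downarrow g$ (resp.\ $g_k \uparrow g$) in a suitable sense.

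The inequality $\underline{H}_g \leq \overline{H}_g$ follows immediately from the comparison principle applied to any admissible sub/supersolution pair. For the opposite inequality I would first treat the case of regular data $g \in C(\h^n) \cap W^{s,p}(\h^n)$ on a domain $\Omega$ that satisfies the measure density condition \eqref{condizione misura}: Theorem \ref{t1 obst existence} applied to the class $\mathcal{K}_{g,-\infty}(\Omega,\Omega')$, together with Corollary \ref{sec3.1_corol1}, produces a unique weak solution $u_g$ to \eqref{problema2}, and Theorem \ref{boundary reg} makes $u_g$ continuous up to $\partial \Omega$ with boundary trace $g$. Such a $u_g$ lies simultaneously in the upper and in the lower Perron class for the datum $g$, so $\overline{H}_g \leq u_g \leq \underline{H}_g$, forcing $H_g=u_g$ and settling the resolutive case for nice data. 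For general admissible $g$ and for general $\Omega$, I would approximate from above and below by monotone sequences $g_k^{\pm}$ of regular data on subdomains smoothly exhausting $\Omega$, use the stability of the Perron functionals to sandwich $\overline{H}_g$ and $\underline{H}_g$ between the common values $u_{g_k^{\pm}}$, and pass to the limit with the aid of the local H\"older and Harnack estimates from \cite{MPPP21,PP21}.

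The main obstacle is precisely this final limit passage: one has to secure locally uniform convergence of $\{u_{g_k^{\pm}}\}$ inside $\Omega$, control their nonlocal tails uniformly on compact subsets of $\Omega$, and transfer this information through the weak formulation \eqref{A}. The uniform tail control will follow from the nonlocal Caccioppoli-type estimates and tail-splitting techniques developed in \cite{MPPP21,PP21} for the Heisenberg framework, while the passage to the limit in the equation relies on the monotonicity of the operator $\mathbf{A}$ via \eqref{monotonicity}, which guarantees strong enough convergence of $f(\cdot,u_{g_k^{\pm}})$ to $f(\cdot,H_g)$. Continuity of $H_g$ is then inherited from the interior regularity in \cite{MPPP21,PP21} combined with Theorem \ref{boundary reg} applied on the exhausting subdomains, and the same semicontinuity-plus-comparison argument shows that the common limit function is both a subsolution and a supersolution, hence a continuous weak solution to \eqref{problema2}.
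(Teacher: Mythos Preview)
Your outline diverges from the paper's argument in a way that leaves a real gap. The paper never approximates the boundary datum and never invokes any stability of the maps $g\mapsto \overline{H}_g$, $g\mapsto\underline{H}_g$; it works directly with the given $g\in W^{s,p}(\h^n)$. The device that replaces your approximation scheme is the \emph{Poisson modification}. One first produces a single element of $\mathcal{U}_g$ by solving the obstacle problem in $\mathcal{K}_{g,g}(\Omega,\Omega')$ (obstacle $h=g$, not $h=-\infty$), obtaining a lower semicontinuous weak supersolution $u\ge g$. One then exhausts $\Omega$ by open sets $D_j$ whose complements satisfy \eqref{condizione misura} and replaces $u$ on each $D_j$ by the weak solution $P_{u,D_j}$ with exterior datum $u$. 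Proposition~\ref{s4 p1} shows that each $P_{u,D_j}$ still lies in $\mathcal{U}_g$: outside $D_j$ it coincides with $u$, so conditions (iii)--(iv) at $\partial\Omega$ are inherited, while the comparison property (ii) is verified via Proposition~\ref{comparison a.e.}. The monotone limit $P_{u,\Omega}=\lim_j P_{u,D_j}$ is a continuous weak solution in $\Omega$ by Corollary~\ref{corol sequence weak solutions}, and since every $P_{u,D_j}\in\mathcal{U}_g$ one gets $\overline{H}_g\le P_{u,\Omega}$; the symmetric construction gives $\underline{H}_g\ge P_{u,\Omega}$, hence $\overline{H}_g\le\underline{H}_g$. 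The reverse inequality uses Proposition~\ref{s4 p2} (both Perron envelopes are already continuous weak solutions in $\Omega$) together with Proposition~\ref{comparison a.e.}.

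The gap in your plan is twofold. First, the stability statement ``$\overline{H}_{g_k}\downarrow\overline{H}_g$ when $g_k\downarrow g$'' is not established anywhere in the paper and is itself a delicate result in nonlocal Perron theory; you cannot take it as a given input. Second, and more structurally, when you solve the Dirichlet problem on a subdomain $D_j\Subset\Omega$ with exterior datum $g_k^\pm$, the resulting function equals $g_k^\pm$ on $\Omega\setminus D_j$, and there is no reason for it to satisfy condition (ii) of Definition~\ref{perron classes and solutions} \emph{relative to} $\Omega$: the datum $g_k^\pm$ is not a supersolution in $\Omega\setminus D_j$, so the function you build need not lie in $\mathcal{U}_g$ (or $\mathcal{L}_g$), and you lose the inequality $\overline{H}_g\le u_{g_k^+}$ that your sandwich argument requires. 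The paper circumvents exactly this by using the supersolution $u\in\mathcal{U}_g$ (rather than $g$) as the exterior datum on each $D_j$, which is what the Poisson-modification machinery is designed to do. Your treatment of the ``nice data on a nice domain'' case is essentially Lemma~\ref{s4 l2}, which the paper records only as a consistency check.
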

   
    A few further comments are in order. We remark that for the proof of Theorem~\ref{perron} above an approach similar to the one used in~\cite{KKP17}, where the class of~$(s,p)$-superharmonic functions is defined, is not possible. Indeed, due to the presence of the function~$f \equiv f(\cdot,u)$ in the Dirichlet problem~\eqref{problema2}, most of the properties of $(s,p)$-superharmonic functions, holding in the classical Euclidean and homogeneous case, are not easily extendable to our context. This has led us to generalized the strategy used in~\cite{LL16} where the author studied the non-homogeneous problem when~$f=f(x)$. The dependence of the function~$f$ of the solution~$u$ is a novelty with respect to the case considered in~\cite{LL16} and this feature changes drastically the background of the problem we are dealing with. Indeed, we are forced to assume~\eqref{monotonicity} in order to prove some classical basic technique in Potential Theory, such as comparison principle (see forthcoming Proposition~\ref{comparison a.e.}) and to apply the regularity results for weak solutions up to the boundary in Theorem~\ref{boundary reg}. We remind that if~$f=f(\xi)$ condition~\eqref{monotonicity} is trivially satisfied and we plainly obtain the same results as the ones  presented in~\cite{LL16}. Hence, our method is consistent.

    \vspace{2mm}
    Moreover, with substantially no modifications on the forthcoming proofs, in the same spirit of the paper~\cite{KMS15} we can replace~$\l$ in~\eqref{operatore} with the more general class of operators
    $$
    \l_ \varPhi u (\xi) = P.~\!V. \int_{\h^n} \varPhi(u(\xi)-u(\eta))K(\xi,\eta)\deta, \qquad \xi \in \h^n,
    $$
    where the kernel~$K : \h^n \times \h^n \rightarrow [0,+\infty)$ is a measurable function  satisfying 
   $$
    \Lambda^{-1} \leq K(\xi,\eta)|\eta^{-1} \circ \xi|_{\h^n}^{-Q-sp} \leq \Lambda, \qquad \mbox{for a.~\!e. } \xi,\eta \in \h^n,
    $$
    for some~$s \in (0,1)$, $p>1$, $\Lambda \geq 1$ and~$|\cdot|_{\h^n}$ defined in~{\rm\eqref{work norm}} and where the real function~$ \varPhi$  is continuous,~$ \varPhi(0)=0$, and it satisfies 
    $$
    \lambda^{-1} |t|^{p} \leq \varPhi(t)t \leq \lambda |t|^p \qquad \mbox{for every} \ t \in \r \smallsetminus \{0\},
    $$
    for some constant~$\lambda >1$.
    
   Note that all the previous conditions are verified when~$K$ does coincide with a homogeneous norm~$\dd$ on~$\h^n$ {\rm (}see Proposition~{\rm\ref{prop1}}{\rm )} and~$ \varPhi(t)=|t|^{p-2}t$. 
  
   \vspace{2mm} 
   {\it Open problems and further developments}.
   The analysis of regularity estimates for double phase equations and their fractional counterpart, as well as mean value properties for solutions to nonlinear fractional operators, and their stability in the linear case when~$p=2$, are still very studied issues; we refer the interested reader to~\cite{DM20,DFP19,BDV20,BS21,SM20} and the references therein. However, to our knowledge, there is not much literature about this problems in the non-Euclidean setting of Carnot groups. Then, we trust that the present paper, together with the aforementioned ones~\cite{MPPP21,PP21}, could be a starting point in investigating nonlocal and nonlinear (or fractional double phase) operators with more complex non-Euclidean underlying geometry and their mean value formulas.   Moreover, we hope that Theorem~\ref{perron} could be an incentive in developing an axiomatic Potential Theory for more general nonlinear and nonlocal operators on Carnot group, as the well known results for subelliptic Laplacians presented in the monograph~\cite{BLU07}.
   
    \vspace{2mm}
    {\it The article is organized as follows}. In Section~\ref{sec_preliminaries} we introduce some basic notion about the Heisenberg group and the functional setting of the problem we are dealing with. We also recall some recent results proved for weak solutions to problem~\eqref{problema2}. In Section~\ref{sec3.1} we prove Theorem~\ref{t1 obst existence} and Theorem~\ref{boundary reg}. In Section~\ref{sec_weak_supersol} we prove some properties about weak supersolutions, and in Section~\ref{sec_perron} we give the proof of Theorem~\ref{perron}.
       
    \section{Preliminaries}\label{sec_preliminaries}
    We begin fixing the notation used throughout the paper. We denote with~$\c$ a general positive constant which can change form line to line. For the sake of readability, dependencies of the constants will be often omitted within the chains of estimates, therefore stated after the estimate. Moreover, throughout the following we indicate with~$u_-:= \max(-u,0)$ the negative part of~$u$ and with~$u_+:=\max(u,0)$ its positive part.
    \subsection{The setting of the main problem}
    Let us introduce the underlying geometrical and functional setting of our problem. For a more detailed presentation of the results cited below about the Heisenberg group we refer the reader to the monograph~\cite{BLU07}.
    
    We denote points in~$\r^{2n+1}$ as
    $$
     \xi := (z,t) = (x_1,\dots,x_n,y_1,\dots,y_n,t).
    $$
   The Heisenberg group~$\h^n$ is defined as the triple $(\r^{2n+1}, \circ, \left\{\Phi_\lambda\right\}_{\lambda >0})$, where the related group law is given by
    $$
	\xi \circ \xi' :=\big(x+x',\, y+y',\, t+t'+2\langle y,x'\rangle-2\langle x,y'\rangle \big)
    $$
   whereas the dilation group~$\left\{\Phi_\lambda\right\}_{\lambda >0}$ is defined as follow
   $$
   \begin{aligned}
	{\Phi}_\lambda :  \r^{2n+1} &\longrightarrow \r^{2n+1}\\
	\xi &\longmapsto {\Phi}_\lambda(\xi):=\big(\lambda z, \,\lambda^2 t \big).
   \end{aligned}
   $$
   We indicate with~$Q:=2n+2$ the homogeneous dimension related to~$\{\Phi_\lambda\}_{\lambda >0}$.
   It can be checked that~$\h^n$ is a Carnot group with the following stratification of its Lie algebra~$\mathfrak{h}^n$
   $$
   \mathfrak{h}^n = \textup{span}\{X_1,\dots,X_{2n}\}\oplus \textup{span}\{T\},
   $$
   where
   $$
   X_j := \partial_{x_j} +2y_j \partial_t, \quad
   X_{n+j}:= \partial_{y_j}-2x_j \partial_t, \quad 1 \leq j \leq n, \quad
   T= \partial_t.
   $$
   Given a domain $\Omega \subset \h^n$, for $u\in C^1(\Omega;\,\r)$ we define the subgradient $\nabla_{\h^n} u$ by
   $$
    \nabla_{\h^n} u (\xi):= \big(X_1u(\xi),\dots, X_{2n}u(\xi)\big),
    $$
   and
   $$
   |\nabla_{\h^n}u|^2 := \sum_{j=1}^{2n}|X_ju|^2.
  $$
   \begin{defn}
	\label{def_homnorm}
	A \textup{homogeneous norm} on $\h^n$ is a continuous function {\rm (}with respect to the  Euclidean topology\,{\rm )} $\dd : \h^n \rightarrow [0,+\infty)$ such that:
	\begin{enumerate}[\rm(i)]
		\item{
			$\dd({\Phi}_\lambda(\xi))=\lambda \dd(\xi)$, for every $\lambda>0$ and every~$\xi \in \h^n$;
		}
		\item{
			$\dd(\xi)=0$ if and only if $\xi=0$.
		}
	\end{enumerate}
	Moreover, we say that the homogeneous norm~$\dd$ is {\rm symmetric} if
	$$
   \dd(\xi^{-1})=\dd(\xi), \qquad \forall\xi \in \h^n.
	$$
   \end{defn}
	Let $\dd$ be a homogeneous norm on $\h^n$. Then the function
	$$
	\texttt{d} : \h^n \times \h^n \longmapsto [0,+\infty), \qquad \texttt{d}(\xi,\eta):=\dd (\eta^{-1}\circ \xi),
	$$
	is a pseudometric on $\h^n$. We will consider throughout the following the standard homogeneous norm on~$\h^n$, 
    \begin{equation}
	\label{work norm}
	|\xi|_{\h^n}= \left(|z|^4 +t^2\right)^\frac{1}{4}, \qquad \mbox{for any}~\xi=(z,t) \in \h^n.
    \end{equation}
    Fixed $\xi_0 \in \h^n$ and $R>0$, the ball~$B_R(\xi_0)$ with center $\xi_0$ and radius $R$ is given by
    $$
    B_R(\xi_0):=\Big\{\xi \in \h^n : |\xi_0^{-1}\circ \xi|_{\h^n} < R\Big\}.
    $$
    We conclude this section with some properties of the homogeneous norm on $\h^n$ that will be useful in the rest of the paper. For a proof of the next proposition we refer to~\cite[Proposition~5.1.4]{BLU07}.
    \begin{prop}
	\label{prop1}
	Let $\dd$ be a homogeneous norm on $\h^n$. Then there exists a constant $\Lambda>0$ such that
	\begin{equation}\label{def_lambda}
	\Lambda^{-1} \, |\xi|_{\h^n}\leq \dd(\xi) \leq \Lambda \, |\xi|_{\h^n}, \qquad \forall \xi \in \h^n,
	\end{equation}
	where~$|\cdot|_{\h^n}$ is defined in~\eqref{work norm}.
    \end{prop}

    \begin{rem}
    In view of the previous proposition, in most of the forthcoming proofs one can simply take into account the pure homogeneous norm defined in~\eqref{work norm} with no modifications at all. Moreover, the homogeneous norm~\eqref{work norm} turns out to be actually a norm, since it satisfies the triangle inequality as proved in~{\rm \cite{BFS17}}.
    \end{rem}
    We conclude this section with two results which turn out very useful in estimating the long and small-range contributions given by the homogeneous norm~$|\cdot|_{\h^n}$. 
   \begin{lemma}\label{lem1}
	Let $\gamma >0$ and let~$|\cdot|_{\h^n}$ be the homogeneous norm on $\h^n$ defined in{\rm~\eqref{work norm}}. Then, there exists~$\c =\c(n,\gamma)>0$ such that
	\begin{equation*}
		\int_{\h^n \smallsetminus B_r(\xi_0)} \frac{{\rm d}\xi}{|\xi_0^{-1}\circ \xi|_{\h^n}^{Q+\gamma}} \leq \c\, r^{-\gamma}.
	\end{equation*}
    \end{lemma}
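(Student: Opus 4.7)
The plan is to reduce to a centered ball by translation invariance and then exploit the dilation structure of $\h^n$ together with the homogeneity of the norm.

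First, I would perform the left translation $\eta \mapsto \xi_0^{-1}\circ \eta$. Since Lebesgue measure on $\r^{2n+1}$ is the bi-invariant Haar measure on $\h^n$, this change of variables has unit Jacobian and the estimate reduces to proving
\[
\int_{\h^n \smallsetminus B_r(0)} \frac{{\rm d}\eta}{|\eta|_{\h^n}^{Q+\gamma}} \leq \c\, r^{-\gamma}.
\]

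Second, I would decompose the exterior of $B_r(0)$ dyadically, writing
\[
\h^n \smallsetminus B_r(0) \;=\; \bigcup_{k=0}^\infty \big(B_{2^{k+1}r}(0)\smallsetminus B_{2^k r}(0)\big).
\]
On each annulus one has $|\eta|_{\h^n} \geq 2^k r$, while $|B_{2^{k+1}r}(0)\smallsetminus B_{2^k r}(0)| \leq |B_{2^{k+1}r}(0)| = c_n (2^{k+1}r)^Q$ by the $Q$-homogeneity of the dilations $\Phi_\lambda$ acting on balls. Hence
\[
\int_{B_{2^{k+1}r}\smallsetminus B_{2^k r}} \frac{{\rm d}\eta}{|\eta|_{\h^n}^{Q+\gamma}} \leq \frac{c_n (2^{k+1}r)^Q}{(2^k r)^{Q+\gamma}} \leq \c(n)\, 2^{-k\gamma}\, r^{-\gamma}.
\]
Summing the geometric series in $k$ (which converges precisely because $\gamma>0$) yields the desired bound with $\c = \c(n,\gamma) = c_n\, 2^Q/(1-2^{-\gamma})$.

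There is no real obstacle here; the only points that require a minimal check are the Haar invariance of Lebesgue measure under left translations on $\h^n$ (standard, since group multiplication adds a term independent of the integration variable's Jacobian in each coordinate) and the volume scaling $|B_{\lambda r}(0)| = \lambda^Q |B_r(0)|$, which is an immediate consequence of property (i) in Definition~\ref{def_homnorm} via the dilation $\Phi_\lambda$ whose Jacobian determinant is $\lambda^Q$. In fact, by Proposition~\ref{prop1} the same argument works verbatim for any homogeneous norm $\dd$ in place of $|\cdot|_{\h^n}$, at the price of a multiplicative constant $\Lambda^{Q+\gamma}$, which may be convenient for later uses.
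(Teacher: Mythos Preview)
Your argument is correct: translation invariance of the Haar measure reduces to the centered case, and the dyadic annular decomposition together with the volume scaling~$|B_\rho(0)|=c_n\rho^Q$ yields a convergent geometric series when~$\gamma>0$. There is nothing to fix.

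As for comparison with the paper: the paper does not actually prove this lemma in-text but defers to~\cite[Lemma~2.6]{MPPP21}. The usual argument there (and in the Carnot group literature generally) is via polar coordinates for homogeneous norms, i.e.\ writing
\[
\int_{\h^n\smallsetminus B_r(0)}\frac{{\rm d}\eta}{|\eta|_{\h^n}^{Q+\gamma}}
= \sigma_n\int_r^\infty \rho^{-Q-\gamma}\,\rho^{Q-1}\,{\rm d}\rho
= \frac{\sigma_n}{\gamma}\,r^{-\gamma},
\]
using the co-area formula on stratified groups. Your dyadic approach is equivalent and arguably more elementary, since it avoids invoking the polar decomposition and only needs the ball-volume scaling; the polar-coordinate route has the slight advantage of giving an explicit constant and an equality rather than an inequality. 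Either way the content is the same.
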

   
    For the proof of the previous lemma we refer to~\cite[Lemma~2.6]{MPPP21}.
    
   \vspace{2mm}
   Let us introduce now our fractional functional setting. We remind that we refer to~\cite{AM18,KS18} for a more detailed presentation of the theorems and definitions stated below.
   
   Let~$p \in (1, \infty)$ and~$s \in (0,1)$ and define the fractional Sobolev spaces~$W^{s,p}(\h^n)$ on the Heisenberg group as the space
   \begin{equation}
	W^{s,p}(\h^n):=\left\{u \in L^p(\h^n):  \frac{|u(\xi)-u(\eta)|}{|\eta^{-1}\circ \xi|_{\h^n}^{\frac{Q}{p}+s}} \in L^p(\h^n \times \h^n)\right\},
   \end{equation}
   endowed with the natural fractional norm
   \begin{align}
	\|u\|_{W^{s,p}(\h^n)} & := \Big(\|u\|_{L^p(\h^n)}^p+[u]_{W^{s,p}(\h^n)}^p\Big)^\frac{1}{p}\notag\\
	& = \left( \int_{\h^n} |u(\xi)|^p \dxi + \int_{\h^n}\int_{\h^n}\frac{|u(\xi)-u(\eta)|^p}{|\eta^{-1}\circ \xi|_{\h^n}^{Q+sp}}\dxieta\right)^\frac{1}{p}, \qquad u \in W^{s,p}(\h^n).
   \end{align}
   In a similar way, given a domain $\Omega \subset \h^n$, one can define the  fractional Sobolev space $W^{s,p}(\Omega)$ in the natural way, as follows
   \begin{equation}
	W^{s,p}(\Omega):=\left\{u \in L^p(\Omega): \,\frac{|u(\xi)-u(\eta)|}{|\eta^{-1}\circ \xi|_{\h^n}^{\frac{Q}{p}+s}} \in L^p(\Omega \times \Omega)\right\},
   \end{equation}
   with norm given by
	 \begin{align}
		\|u\|_{W^{s,p}(\Omega)} & := \left(\|u\|_{L^p(\Omega)}^p+[u]_{W^{s,p}(\Omega)}^p\right)^\frac{1}{p}\notag\\
		& = \left( \int_{\Omega} |u(\xi)|^p \dxi + \int_{\Omega}\int_{\Omega}\frac{|u(\xi)-u(\eta)|^p}{|\eta^{-1}\circ \xi|_{\h^n}^{Q+sp}}\dxieta\right)^\frac{1}{p}, \qquad u \in W^{s,p}(\Omega).
	\end{align}
    We denote with $W^{s,p}_0(\Omega)$ the closure of $C_0^\infty(\Omega)$ in $W^{s,p}(\h^n)$ and recall that if $v \in W^{s,p}(\Omega')$ with  $\Omega \Subset \Omega'$ and $v=0$ outside of $\Omega$ almost everywhere, then $v$ has a representative in $W_0^{s,p}(\Omega)$ as well.
    As in the Euclidean case, a Sobolev inequality and a Poincaré-type one do still hold in the fractional setting in the Heisenberg group. Indeed -- as seen in~\cite[Theorem~2.5]{KS18} by extending the approach in~\cite[Theorem~6.5]{DPV12}; see also the Appendix in~\cite{PSV13} -- one can prove the estimate stated below.
   \begin{theorem}
	\label{sobolev}
	Let $p>1$ and $s \in (0,1)$ such that $sp<Q$. For any measurable compactly supported function $u : \h^n \rightarrow \r$ there exists~$\c=\c(n,p,s)>0$ such that
	\begin{equation*}
		\|u\|^p_{L^{p^*}(\h^n)}\, \leq \, \c \,[u]^p_{W^{s,p}(\h^n)}\,,
	\end{equation*}
	where $p^* = {Qp}/{(Q-sp)}$ is the critical Sobolev exponent.
   \end{theorem}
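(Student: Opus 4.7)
The approach is to transplant the Euclidean fractional Sobolev argument of Di Nezza, Palatucci and Valdinoci to the Heisenberg setting. The structural facts that make this possible are already recorded in the preliminaries: the Haar measure on $\h^n$ coincides with Lebesgue measure on $\r^{2n+1}$; the homogeneous norm $|\cdot|_{\h^n}$ is left-invariant, symmetric, and scales by $|\Phi_\lambda(\xi)|_{\h^n}=\lambda|\xi|_{\h^n}$, so that $|B_r(\xi_0)|=c_Q r^{Q}$ for every $\xi_0\in\h^n$; and the Gagliardo kernel $|\eta^{-1}\circ\xi|_{\h^n}^{-(Q+sp)}$ is invariant under left-translations. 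By density of $C_c^\infty(\h^n)$ in $W^{s,p}(\h^n)$ and the elementary bound $\big||u(\xi)|-|u(\eta)|\big|\le |u(\xi)-u(\eta)|$, which gives $[|u|]_{W^{s,p}(\h^n)}\le[u]_{W^{s,p}(\h^n)}$, I may restrict to $u\in C_c^\infty(\h^n)$ with $u\ge 0$.

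For $k\in\mathbb{Z}$ I introduce the dyadic level sets $A_k:=\{\xi\in\h^n:u(\xi)>2^k\}$ and the truncations $w_k:=\min\{(u-2^k)_+,2^k\}$. The layer-cake representation yields
$$
\|u\|_{L^{p^*}(\h^n)}^{p^*}\ \leq\ \c\sum_{k\in\mathbb{Z}} 2^{kp^*}|A_k|.
$$
The core inequality to be established is
$$
2^{kp}|A_{k+1}|\ \leq\ \c\, |A_{k-1}|^{sp/Q}\,\mathcal{E}_k,\qquad \sum_{k\in\mathbb{Z}}\mathcal{E}_k\ \leq\ \c\,[u]_{W^{s,p}(\h^n)}^p,
$$
where $\mathcal{E}_k$ is an appropriate ``slice'' of the Gagliardo double integral. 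To prove it, pick $r_k$ with $c_Q r_k^Q=2|A_{k-1}|$; by the invariance of the measure under left-translations there exists $\xi_k$ with $|(\h^n\smallsetminus A_{k-1})\cap B_{r_k}(\xi_k)|\ge\tfrac12 |B_{r_k}|$. For $\xi\in A_{k+1}$ and $\eta\in(\h^n\smallsetminus A_{k-1})\cap B_{r_k}(\xi_k)$ one has $u(\xi)-u(\eta)\ge 2^{k-1}$ and $|\eta^{-1}\circ\xi|_{\h^n}\le 2\Lambda r_k$ (using Proposition~\ref{prop1}), so a Chebyshev bound on this product set together with $r_k^Q\sim|A_{k-1}|$ produces the claim.

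Summing the core inequality over $k\in\mathbb{Z}$ and applying the discrete H\"older inequality with exponents $Q/(sp)$ and $Q/(Q-sp)$ gives
$$
\sum_{k}2^{kp^*}|A_k|\ \leq\ \c\,[u]_{W^{s,p}(\h^n)}^p\,\Big(\sum_k 2^{kp^*}|A_k|\Big)^{(p^*-p)/p^*},
$$
which, after rearrangement and the identity $p^*(Q-sp)=Qp$, yields the asserted embedding. The main obstacle will be the bookkeeping needed to ensure that the slices $\mathcal{E}_k$ essentially partition the full Gagliardo double integral with only a bounded multiplicative overlap: this is where the monotonicity of the kernel in the distance $|\eta^{-1}\circ\xi|_{\h^n}$ and the bi-Lipschitz equivalence of homogeneous norms from Proposition~\ref{prop1} become essential. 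All remaining steps are notational adaptations of the DPV argument, with the topological dimension replaced by the homogeneous dimension $Q$.
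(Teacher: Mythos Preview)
The paper does not supply its own proof of this theorem; it simply records that the result follows by transplanting the argument of \cite[Theorem~6.5]{DPV12} to the Heisenberg setting, as carried out in \cite[Theorem~2.5]{KS18} (see also the Appendix of \cite{PSV13}). Your proposal follows exactly this route, so the overall strategy is the intended one.

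There is, however, a concrete error in your sketch of the core inequality. You fix a \emph{single} center $\xi_k$ and then assert that for every $\xi\in A_{k+1}$ and every $\eta\in(\h^n\smallsetminus A_{k-1})\cap B_{r_k}(\xi_k)$ one has $|\eta^{-1}\circ\xi|_{\h^n}\le 2\Lambda r_k$. This is false: nothing forces $\xi$ to lie anywhere near $B_{r_k}(\xi_k)$, so the distance can be arbitrarily large and your Chebyshev step collapses. The correct version centers the ball at $\xi$ itself: for each $\xi\in A_{k+1}$, with $r_k$ chosen so that $|B_{r_k}|=2|A_{k-1}|$, one has $|(\h^n\smallsetminus A_{k-1})\cap B_{r_k}(\xi)|\ge |B_{r_k}|-|A_{k-1}|=\tfrac12|B_{r_k}|$, and now $|\eta^{-1}\circ\xi|_{\h^n}\le r_k$ is automatic for $\eta\in B_{r_k}(\xi)$. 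No appeal to Proposition~\ref{prop1} is needed here. With this fix the core inequality goes through.

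On the ``bookkeeping'' you flag as the main obstacle: the reason the slices $\mathcal{E}_k$ can be summed with bounded overlap is \emph{not} the monotonicity of the kernel in the distance nor the bi-Lipschitz equivalence of norms, as you suggest. If $\mathcal{E}_k$ is the integral over $A_{k+1}\times(\h^n\smallsetminus A_{k-1})$, a fixed pair $(\xi,\eta)$ is counted roughly $\log_2(u(\xi)/u(\eta))$ times, which is unbounded. The device in \cite{DPV12} is instead to restrict the $\xi$-integration to the pairwise disjoint shells $D_{k}:=A_{k}\smallsetminus A_{k+1}$; then each pair contributes to at most one slice and $\sum_k\mathcal{E}_k\le[u]_{W^{s,p}(\h^n)}^p$ is immediate, while a geometric-series computation gives $\sum_k 2^{kp^*}|D_k|\sim\sum_k 2^{kp^*}|A_k|$, so nothing is lost on the left-hand side. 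This step uses only $|B_r|=c_Q r^Q$ and carries over verbatim.
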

   Moreover, applying the same technique used in~\cite{Min03} we can prove a fractional Poincaré-type inequality 
   \begin{prop}\label{poincare}
    Let~$p \geq 1$ and~$s \in (0,1)$ and~$u \in W^{s,p}_{\rm loc}(\Omega)$. Then, for any $B_r \equiv B_r(\xi_0) \Subset \Omega$ we have that
   \begin{equation}
   \int_{B_r}|u-(u)_r|^p \dxi \leq \c \, r^{sp} \int_{B_r}\int_{B_r} \frac{|u(\xi)-u(\eta)|^p}{|\eta^{-1}\circ \xi|_{\h^n}^{Q+sp}} \dxieta,
   \end{equation}
   where~$\c=\c(n,p)>0$ and~$(u)_r = \dashint_{B_r}u \dxi$.
   \end{prop}
   \vspace{2mm}
 
    For any $v \in L^{p-1}_{\rm loc}(\h^n)$ and for any $B_r(\xi_0) \subset \h^n$ define the \textit{nonlocal tail of a function v in the ball}  $B_r(\xi_0)$ the quantity
     \begin{equation}
     	\label{tail}
     	\textup{Tail}(u;\xi_0,r):= \left(r^{sp} \int_{\h^n \smallsetminus B_r(\xi_0)}|v(\xi)|^{p-1}|\xi_0^{-1} \circ \xi|_{\h^n}^{-Q -sp}\,{\rm d}\xi\right)^{\frac{1}{p-1}}.
     \end{equation}
    This quantity is the analogue in the Heisenberg framework of the one introduced in~\cite{DKP16,DKP14}; see also~\cite{MPPP21,PP21}. The tail function has been crucial in studying the long-range interactions that naturally arise when dealing with operator as in~\eqref{operatore}; we refer to~\cite{Pal18} for a survey in such a framework. Let us consider the space of functions with finite tail
    $$
      L^{p-1}_{sp}(\h^n):=\left\{ v\in L^{p-1}_{\rm loc}(\h^n): \int_{\h^n} \frac{|v(\xi)|^{p-1}}{(1+|\xi|_{\h^n})^{Q+sp}}\dxi < \infty\right\}.
    $$
    Clearly, it follows by definition that~$L^\infty(\h^n) \subset L^{p-1}_{sp}(\h^n)$; so the  space above is not empty.
    
    With this bit of notation we can eventually give the definition of  weak (super/sub) solution to problem~\eqref{problema2}.
    
	\begin{defn}\label{solution to inhomo pbm}
		A function $u \in W^{s,p}_{\rm loc}(\Omega)$ such that~$u_- \in L^{p-1}_{sp}(\h^n)$ ( $u_+ \in L^{p-1}_{sp}(\h^n)$, resp.) is a {\rm fractional weak $p$-supersolution} (\textup{$p$-subsolution}, resp.) to~\textup{\eqref{problema2}} if
		\begin{eqnarray*}
		&& \int_{\h^n}\int_{\h^n}\frac{\big|u(\xi)-u(\eta)\big|^{p-2}\big(u(\xi)-u(\eta)\big)\big(\psi(\xi)-\psi(\eta)\big)}{\dd(\eta^{-1}\circ \xi)^{Q+sp}}  \,{\rm d}\xi \,{\rm d}\eta \\*[0.5ex]
		&& \hspace{6cm} \geq \big(\leq,\textrm{resp.}\big) \int_{\h^n}f(\xi,u(\xi))\psi(\xi) \, \,{\rm d}\xi,
	\end{eqnarray*}
		for any nonnegative  $ \psi \in C^\infty_0(\Omega)$.
		\\	A function u is a {\rm fractional weak $p$-solution} if it is both a fractional weak $p$-super and $p$-subsolution.
	\end{defn}
    \vspace{2mm}
    
    We often simply write weak supersolution, subsolution and solution, omitting~$p$ from our notation. Moreover, in the definition above we can replace the condition~$\psi \in C^\infty_0(\Omega)$ with~$\psi \in W^{s,p}_0(D)$ with~$D \Subset \Omega$.
  
    As the next lemma shows, it does not make any difference to assume that~$u \in L^{p-1}_{sp}(\h^n)$ instead of~$u_-$ in Definition~\ref{solution to inhomo pbm}.
    
    \begin{lemma}\label{s1 lem1}
    Let~$u$ be a weak supersolution in~$B_{2r} \equiv B_{2r}(\xi_0)$. Then, there exists~$\c=\c(n,p,s, \Lambda)>0$ such that
    \begin{align*}
    &\textup{Tail}(u_+;\xi_0,r)\\
           & \leq \c \, \left(r^\frac{sp-1-Q}{p-1}[u]_{W^{h,p-1}(B_r)}+r^{-\frac{Q}{p-1}}\|u\|_{L^{p-1}(B_r)} + \textup{Tail}(u_-;\xi_0,r)+ r^\frac{sp}{p-1} \, \|f\|_{L^\infty(B_r)}^\frac{1}{p-1}\right)
    \end{align*}
   with
   $$
   h = \max\left(0,\frac{sp-1}{p-1}\right) < s.
   $$
   In particular, if~$u$ is a weak supersolution in an open set~$\Omega$, then~$ u \in L^{p-1}_{sp}(\h^n)$.
    \end{lemma}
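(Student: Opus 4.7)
The plan is to test the weak supersolution inequality of Definition~\ref{solution to inhomo pbm} against a suitable cutoff and isolate the positive tail of~$u$. I would pick $\psi\in C^\infty_0(B_{r/2})$ with $0\leq \psi\leq 1$, $\psi\equiv 1$ on $B_{r/4}$ and $|\nabla_{\h^n}\psi|\leq c/r$, so that
\[
|\psi(\xi)-\psi(\eta)|\;\leq\; c\,\min\!\bigl(1,\dd(\eta^{-1}\circ\xi)/r\bigr).
\]
Letting $K_u(\xi,\eta):=|u(\xi)-u(\eta)|^{p-2}(u(\xi)-u(\eta))\,\dd(\eta^{-1}\circ\xi)^{-Q-sp}$, the antisymmetry $K_u(\xi,\eta)=-K_u(\eta,\xi)$ together with the fact that $\psi=0$ outside $B_{r/2}\subset B_r$ allows me to split the double integral as
\[
I_{\mathrm{loc}}+2\,I_\times\;\geq\;\int_{B_{r/2}}f(\xi,u(\xi))\psi(\xi)\,d\xi,
\]
with $I_{\mathrm{loc}}:=\int_{B_r}\!\int_{B_r}K_u(\xi,\eta)(\psi(\xi)-\psi(\eta))\,d\xi\,d\eta$ and $I_\times:=\int_{B_{r/2}}\psi(\xi)\int_{\h^n\setminus B_r}K_u(\xi,\eta)\,d\eta\,d\xi$.

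The decisive step is to lower bound $-I_\times$ in terms of $\textup{Tail}(u_+;\xi_0,r)^{p-1}$. For $\xi\in B_{r/2}$ and $\eta\in\h^n\setminus B_r$, the quasi-triangle inequality for $\dd$ combined with Proposition~\ref{prop1} yields $\dd(\eta^{-1}\circ\xi)\simeq \dd(\eta^{-1}\circ\xi_0)\geq r/2$. Then I would rely on the elementary pointwise inequality
\[
|a-b|^{p-2}(b-a)\;\geq\; c_p^{-1}\, b_+^{p-1}\;-\;c_p\bigl(|a|^{p-1}+b_-^{p-1}\bigr),\qquad a,b\in\r,
\]
verified by separating the cases $b\geq 0,\,b<0$ and, within each, the sign of $b-a$. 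Inserting $a=u(\xi)$, $b=u(\eta)$ and using Lemma~\ref{lem1} to estimate $\int_{\h^n\setminus B_r}\dd(\eta^{-1}\circ\xi_0)^{-Q-sp}\,d\eta\leq c\,r^{-sp}$, one obtains
\[
-I_\times\;\geq\; c\,r^{Q-sp}\textup{Tail}(u_+;\xi_0,r)^{p-1}\;-\;c\,r^{-sp}\|u\|_{L^{p-1}(B_r)}^{p-1}\;-\;c\,r^{Q-sp}\textup{Tail}(u_-;\xi_0,r)^{p-1}.
\]
For $I_{\mathrm{loc}}$, interpolating the oscillation bound as $|\psi(\xi)-\psi(\eta)|\leq c(\dd(\eta^{-1}\circ\xi)/r)^\beta$ with $\beta:=\min(sp,1)=sp-h(p-1)\in[0,1]$ matches precisely the kernel of the $W^{h,p-1}$ seminorm, giving $|I_{\mathrm{loc}}|\leq c\,r^{-\beta}[u]^{p-1}_{W^{h,p-1}(B_r)}$; the source term is trivially bounded by $c\,r^Q\|f\|_{L^\infty(B_r)}$ via~\eqref{bound f}.

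Combining these three estimates, absorbing $\textup{Tail}(u_+)^{p-1}$ to the left, dividing by $r^{Q-sp}$ and taking the $(p-1)$-th root via subadditivity $(x_1+\cdots+x_4)^{1/(p-1)}\leq c\sum_i x_i^{1/(p-1)}$ yields the claimed inequality. The final assertion $u\in L^{p-1}_{sp}(\h^n)$ follows by covering $\Omega$ with balls $B_{2r}\Subset\Omega$: the inequality gives locally uniform control of $\textup{Tail}(u_+;\cdot,r)$, and since $u_-\in L^{p-1}_{sp}(\h^n)$ by Definition~\ref{solution to inhomo pbm}, also $|u|=u_++u_-\in L^{p-1}_{sp}(\h^n)$. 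The main technical hurdles are the elementary nonlinear inequality displayed above, needed to disentangle the $u_+$ and $u_-$ contributions in the cross term, and the sharp interpolation of the cutoff oscillation that produces exactly the seminorm~$[u]_{W^{h,p-1}}$ with $h=\max(0,(sp-1)/(p-1))$.
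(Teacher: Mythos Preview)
Your proposal is correct and follows essentially the same strategy as the paper's own proof: the same cutoff $\psi\in C^\infty_0(B_{r/2})$, the same local/nonlocal splitting of the double integral, the same pointwise inequality on $L(a,b)$ to isolate $u_+^{p-1}(\eta)$ in the cross term, and the same interpolated Lipschitz bound on $\psi$ to produce exactly the $W^{h,p-1}$ seminorm with $h=\max\bigl(0,(sp-1)/(p-1)\bigr)$. The only cosmetic difference is that the paper dispatches the final ``in particular'' clause by a one-line appeal to H\"older's inequality (to see that the right-hand side is finite for $u\in W^{s,p}_{\rm loc}$), whereas you phrase it as a covering argument; both are valid.
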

   \begin{proof}
   Let us consider in the weak formulation a function~$\psi \in C^\infty_0(B_{r/2})$ such that~$\psi \equiv 1$ in~$B_{r/4}$, with~$0 \leq \psi \leq 1$ and~$|\nabla_{\h^n} \psi | \leq 8/r$. We have
   \begin{align}\label{s1 lem1 e1}
   \int_{\h^n}f(\xi,u)\psi(\xi) \dxi & \leq \int_{B_r}\int_{B_r}\frac{|u(\xi)-u(\eta)|^{p-2}(u(\xi)-u(\eta))(\psi(\xi)-\psi(\eta))}{\dd(\eta^{-1}\circ \xi)^{Q+sp}}\dxieta\notag\\*
   &\quad +2 \int_{\h^n \smallsetminus B_r}\int_{B_{r/2}}\frac{|u(\xi)-u(\eta)|^{p-2}(u(\xi)-u(\eta))\psi(\xi)}{\dd(\eta^{-1}\circ \xi)^{Q+sp}}\dxieta\notag\\
   & =: I_1+I_2
   \end{align}
   We separately estimate~$I_1$ and~$I_2$ on the righthand side of~\eqref{s1 lem1 e1}. As for~$I_1$ using the fact that~$|\psi(\xi)-\psi(\eta)| \leq |\eta^{-1}\circ \xi|_{\h^n}/r$ we trivially obtain that
   $$
   I_1 \leq \frac{\c}{r^{\min(sp,1)}}[u]_{W^{h,p-1}(B_r)}^{p-1}.
   $$
   On the other hand, for the second integral~$I_2$ we make use of the inequality
   $$
   |u(\xi)-u(\eta)|^{p-2}(u(\xi)-u(\eta)) \leq 2^{p-1}( u_+^{p-1}(\xi)+u_-^{p-1}(\eta))-u_+^{p-1}(\eta),
   $$
   Hence, we obtain that
   $$
   I_2 \leq \c \,  r^{-sp}\|u\|^{p-1}_{L^{p-1}(B_r)} +\c \, r^{Q-sp} \textup{Tail}(u_-;\xi_0,r)^{p-1} -\c \, r^{Q-sp} \textup{Tail}(u_+;\xi_0,r)^{p-1}.
   $$
   As for the lefthand side, using~\eqref{bound f}, we eventually obtain that
   $$
   \int_{\h^n} f(\xi,u) \psi(\xi) \dxi \geq - \c \, r^Q\|f\|_{L^{\infty}(B_r)}.
   $$
   Combining all previous estimates we get the desired inequality. The second statement follows by applying H\"older's Inequality.
   \end{proof}
   
   \subsection{Algebraic inequalities}
   We state here some practical and elementary inequalities. For the sake of readability we adopt the following notation
   \begin{equation}\label{sec3.1_e0}
   L(a,b) := |a-b|^{p-2}(a-b), \qquad a,b \in \r.
   \end{equation}
   For a proof of the following lemma see~\cite[Lemma~2.1-2.2]{KKP16}
   \begin{lemma}\label{s1 lem2}
	Let~$1 < p \leq 2$ and~$a,b,a',b' \in \r$. Then,
	\begin{equation}
		|L(a,b)-L(a',b')| \leq 4|a-a'-b+b'|^{p-1}.
	\end{equation}
	Let~$p \geq 2$ and~$a,b,a',b' \in \r$. Then,
	\begin{equation}
		|L(a,b)-L(a',b')| \leq \c \,|a-a'|^{p-1} +\c \,|a-a'||a-b|^{p-2},
	\end{equation}
	and
	\begin{equation}
		|L(a,b)-L(a',b')| \leq \c \, |b-b'|^{p-1}+\c \,|b-b'||a-b|^{p-2},
	\end{equation}
	where~$\c$ depends only on~$p$.
   \end{lemma}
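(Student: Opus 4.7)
The structural observation is that $L(a,b)=\varphi(a-b)$ with $\varphi(t):=|t|^{p-2}t$, so every $|L(a,b)-L(a',b')|$ reduces to $|\varphi(t)-\varphi(t')|$ with $t:=a-b$, $t':=a'-b'$, and $t-t'=a-a'-b+b'$. This collapses the lemma to one-variable estimates on $\varphi$.

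For $1<p\leq 2$ I would show that $\varphi$ is $(p-1)$-H\"older on $\r$ with constant at most $4$, by case analysis on the signs of $t,t'$. When $tt'\geq 0$, WLOG $t\geq t'\geq 0$, and the bound $t^{p-1}-(t')^{p-1}\leq(t-t')^{p-1}$ follows from the subadditivity $(x+y)^r\leq x^r+y^r$ for $r=p-1\in(0,1]$ applied to $x=t-t'$, $y=t'$. When $tt'<0$ instead, $|\varphi(t)-\varphi(t')|=|t|^{p-1}+|t'|^{p-1}\leq 2^{2-p}(|t|+|t'|)^{p-1}=2^{2-p}|t-t'|^{p-1}\leq 4|t-t'|^{p-1}$, via the complementary concavity inequality $x^{p-1}+y^{p-1}\leq 2^{2-p}(x+y)^{p-1}$ valid for $x,y\geq 0$ and $p-1\in(0,1]$. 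Combining the two cases yields the first displayed estimate.

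For $p\geq 2$, the map $\varphi$ is $C^1(\r)$ with $\varphi'(s)=(p-1)|s|^{p-2}$ non-decreasing in $|s|$, so the mean value theorem gives $|\varphi(\alpha)-\varphi(\beta)|\leq (p-1)|\alpha-\beta|\max(|\alpha|,|\beta|)^{p-2}$ for every $\alpha,\beta\in\r$. To obtain the second displayed inequality I would apply this with $(\alpha,\beta)=(a-b,a'-b)$ (i.e.\ as a three-variable estimate keeping the second argument of $L$ fixed), use $\max(|a-b|,|a'-b|)\leq|a-b|+|a-a'|$, and combine with the elementary convexity bound $(x+y)^{p-2}\leq \c\,(x^{p-2}+y^{p-2})$ for $x,y\geq 0$; the third inequality follows by the symmetric choice $(\alpha,\beta)=(a-b,a-b')$. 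The argument is entirely elementary and I do not foresee any serious obstacle; the only delicate points are choosing the sharp subadditivity/concavity constants in the regime $1<p\leq 2$ and exploiting the monotonicity of $|s|^{p-2}$ in $|s|$ when $p\geq 2$.
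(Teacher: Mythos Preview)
Your argument is correct and is precisely the standard elementary route; the paper does not give its own proof but simply refers to \cite[Lemma~2.1--2.2]{KKP16}, where the same one-variable reduction via $L(a,b)=\varphi(a-b)$ and the same subadditivity/concavity and mean-value estimates are used. You have also correctly spotted that the second and third inequalities, as literally typeset with four free parameters $a,b,a',b'$, cannot hold (take $a=a'$, $b\neq b'$ in the second): they are meant with $b'=b$ and $a'=a$ respectively, which is exactly how they are applied later in the paper (e.g.\ to $L(u_j(\xi),g(\eta))-L(u(\xi),g(\eta))$) and how \cite{KKP16} states them.
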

   
   Finally we would like to recall the following inequalities. The first states that there exists a positive constant~$\c$ depending only on~$p$ such that
   
   \begin{equation}\label{s1 e1}
   	\frac{1}{\c} \leq \frac{(|a|^{p-2}a-|b|^{p-2}b)(a-b)}{(|a|+|b|)^{p-2}(a-b)^2} \leq \c,
   \end{equation}
   when~$a,b \in \r$, $a \neq b$. In particular,
   
   \begin{equation}\label{s1 e2}
   	(|a|^{p-2}a-|b|^{p-2}b)(a-b) \geq 0, \qquad a,b \in \r.
   \end{equation}

   \subsection{Some results about nonlocal fractional equations in $\h^n$}
    In this section we state some recent results about nonlocal fractional equation in the Heisenberg group. All the following theorems are contained in the recent works~\cite{MPPP21,PP21}.
  
   We remark that the assumption~$u \in W^{s,p}(\h^n)$ can be weakened requiring instead~$u \in W^{s,p}_{\rm loc}(\Omega) \cap L^{p-1}_{sp}(\h^n)$ with substantially no modification in the proofs of the next theorems. Moreover, all results hold only requiring that the function~$f$ satisfies conditions~\eqref{reg f} and~\eqref{bound f}.
  
   We begin stating a Caccioppoli-type inequality with tail. 
   
   	\begin{theorem}[{\bf Caccioppoli-type inequality with tail}]\label{teo_caccioppoli}
   	Let  $s \in (0,1)$, $p \in (1,\infty)$, and let $u$ be a weak subsolution to~\eqref{problema2}. Then, for any~$B_r \equiv B_r(\xi_0) \subset \Omega$ and any~nonnegative $\p \in C^\infty_0(B_r)$, the following estimate holds true	
   	\begin{align}\label{caccioppoli}
   		\int_{B_r}& \int_{B_r} |\eta^{-1} \circ \xi|_{\h^n}^{-Q-sp}  |w_+(\xi)\p(\xi)-w_+(\eta)\p(\eta)|^p \, \,{\rm d}\xi \,{\rm d}\eta\notag\\*
   		& \leq \c\int_{B_r}\int_{B_r} |\eta^{-1} \circ \xi|_{\h^n}^{-Q-sp} w_+^p(\xi)|\p(\xi)-\p(\eta)|^p \, \,{\rm d}\xi \,{\rm d}\eta\\*
   		&\quad+ \c \int_{B_r}w_+(\xi)\p^p(\xi) \, \,{\rm d}\xi \biggl(\sup_{\eta \in \textup{supp}\, \p}\int_{\h^n \smallsetminus B_r} |\eta^{-1} \circ \xi|_{\h^n}^{-Q-sp} w_+^{p-1}(\xi) \, \,{\rm d}\xi\notag   \\*
   		& \hspace{4.5cm} +\|f\|_{L^\infty(B_r)}\biggr)\notag
   	\end{align}
   	where $w_+ := (u-k)_+$ and $\c=\c\,(n,p,s,\Lambda)>0$.
   \end{theorem}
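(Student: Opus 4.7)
The strategy is the standard De Giorgi--type test in the nonlocal setting. The plan is to insert the test function $\psi := w_+ \p^p$ into the weak subsolution inequality of Definition~\ref{solution to inhomo pbm}. Since $\p \in C_0^\infty(B_r)$ and $w_+ \in W^{s,p}_{\rm loc}(\Omega) \cap L^\infty_{\rm loc}$ (after a standard truncation/approximation argument, which can be made rigorous as in~\cite[Lemma~2.3]{KKP16}), $\psi$ is an admissible nonnegative test function supported in $B_r$, so
$$
\int_{\h^n}\!\int_{\h^n} \frac{L(u(\xi),u(\eta))(\psi(\xi)-\psi(\eta))}{\dd(\eta^{-1}\circ \xi)^{Q+sp}}\dxieta \leq \int_{B_r} f(\xi,u)\psi(\xi)\dxi,
$$
with $L$ as in~\eqref{sec3.1_e0}. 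Using Proposition~\ref{prop1} we may equivalently work with the norm $|\cdot|_{\h^n}$ up to multiplicative constants depending on $\Lambda$.

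Next I split the double integral into the diagonal block $B_r \times B_r$ and the two symmetric off-diagonal blocks $B_r \times (\h^n\setminus B_r)$ (exploiting the symmetry of the kernel and the fact that $\psi\equiv 0$ outside $B_r$). For the diagonal block, replacing $u$ by $u-k$ (the kernel sees only increments), I would invoke the pointwise algebraic inequality: for every $a,b\in\r$ and all $\p_1,\p_2\geq 0$,
$$
L(a,b)\bigl(a_+\p_1^p - b_+\p_2^p\bigr) \geq \tfrac{1}{\c}|a_+\p_1 - b_+\p_2|^p - \c\,\max(a_+,b_+)^p |\p_1-\p_2|^p,
$$
which is exactly~\cite[Lemma~2.3]{DKP14} and holds for all $p>1$; in the present notation with $a=u(\xi)-k$, $b=u(\eta)-k$, $\p_1=\p(\xi)$, $\p_2=\p(\eta)$, it yields pointwise
$$
\frac{L(u(\xi),u(\eta))\bigl(w_+(\xi)\p^p(\xi)-w_+(\eta)\p^p(\eta)\bigr)}{|\eta^{-1}\circ\xi|_{\h^n}^{Q+sp}} \geq \tfrac{1}{\c}\frac{|w_+(\xi)\p(\xi)-w_+(\eta)\p(\eta)|^p}{|\eta^{-1}\circ\xi|_{\h^n}^{Q+sp}} - \c\,\frac{\max(w_+(\xi),w_+(\eta))^p|\p(\xi)-\p(\eta)|^p}{|\eta^{-1}\circ\xi|_{\h^n}^{Q+sp}}.
$$
Integrating and using symmetry (the second term is symmetric in $\xi,\eta$) produces the first term on the left of~\eqref{caccioppoli} and the first term on the right.

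For the off-diagonal blocks, $\psi(\eta)=0$ when $\eta\in\h^n\setminus B_r$, so the contribution is
$$
2\int_{B_r}\int_{\h^n\setminus B_r}\frac{L(u(\xi),u(\eta))\,w_+(\xi)\p^p(\xi)}{|\eta^{-1}\circ\xi|_{\h^n}^{Q+sp}}\dxieta.
$$
Writing $u(\xi)-u(\eta) = (u(\xi)-k)-(u(\eta)-k)$ and exploiting the elementary bound
$$
-L(u(\xi),u(\eta)) \leq \bigl(u(\eta)-k\bigr)_+^{p-1}+\bigl((u(\xi)-k)_-\bigr)^{p-1} \leq (u(\eta)-k)_+^{p-1}
$$
(recall $w_+(\xi)=(u(\xi)-k)_+$ so the contribution is nontrivial only where $u(\xi)\geq k$), I obtain exactly the $\sup_{\eta\in \textup{supp}\,\p}$ nonlocal term in~\eqref{caccioppoli}, after pulling the supremum outside and using $\int w_+\p^p\,d\xi$ as a multiplier. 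Finally, the right-hand side with $f$ is controlled by $\|f\|_{L^\infty(B_r)}\int_{B_r}w_+(\xi)\p^p(\xi)\dxi$ thanks to~\eqref{bound f}.

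The main technical obstacle is the pointwise algebraic inequality above, because its proof splits into several sign cases for $a,b$ and uses~\eqref{s1 e1}; but it is available in the literature and carries over verbatim to the Heisenberg setting since it involves only real values of $u$ and the kernel depends on $\xi,\eta$ only through the homogeneous distance, whose ball structure enters only through Lemma~\ref{lem1} for the tail supremum. The remaining steps -- the splitting, the symmetrization, the absorption of the $\tfrac{1}{\c}$ term on the left, and the $f$-term on the right -- are all routine.
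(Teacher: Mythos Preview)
Your proposal is correct and follows exactly the standard De Giorgi--type strategy (test with $w_+\p^p$, split diagonal/off-diagonal, invoke the pointwise algebraic lemma of~\cite{DKP14,DKP16}, bound the tail and the $f$-term) that the paper relies on; note that the paper does not supply its own proof of this theorem but merely quotes it from~\cite{MPPP21}, whose argument is precisely the one you outline, transplanted to the Heisenberg kernel via Proposition~\ref{prop1}.
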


   \vspace{2mm} 
   The main consequence of the previous inequality is a local boundedness result (which is new even for the linear case~$p =2$). In particular,
   
   \begin{theorem}[{\bf Local boundedness}]\label{teo_bdd} 
   	Let $s \in (0,1)$ and $p \in (1,\infty)$, let $u$ be a weak subsolution to~{\rm \eqref{problema2}} and let $B_r \equiv B_r(\xi_0)   \subset \Omega$. Then the following estimate holds true, for any $\delta \in (0,1]$,
   	\begin{equation}		\label{eq_bdd}
   		\esssup_{B_{r/2}}u \, \leq\,   \delta \,\textup{Tail}(u_+; \xi_0, r/2) + \c\,\delta^{-\frac{(p-1)Q}{sp^2}}  \left( \,\dashint_{B_r}u_+^p{\rm d}\xi\right )^\frac{1}{p}, 
   	\end{equation}
   	where $\textup{Tail}(u_+;\xi_0,r/2)$ is defined in~\eqref{tail} and the positive constant $\c$ depends only on $n,p,s,\Lambda$ and $\|f\|_{L^\infty(B_r)}$.
   \end{theorem}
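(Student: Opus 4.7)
The approach is a De Giorgi iteration based on the Caccioppoli inequality with tail (Theorem~\ref{teo_caccioppoli}) coupled with the fractional Sobolev embedding (Theorem~\ref{sobolev}), where the truncation level is tuned so that the long-range contribution is absorbed into a small~$\delta$-multiple of the tail. Throughout, abbreviate~$T := \textup{Tail}(u_+;\xi_0,r/2)$ and fix the target level
$$
k := \delta T + \c\,\delta^{-\frac{(p-1)Q}{sp^2}}\Big(\dashint_{B_r} u_+^p\,{\rm d}\xi\Big)^{1/p},
$$
together with dyadic radii~$r_j := r/2 + r/2^{j+1}$, balls~$B_j := B_{r_j}(\xi_0)$, levels~$k_j := k(1-2^{-j-1})$, truncations~$w_j := (u-k_j)_+$, and cutoffs~$\varphi_j \in C^\infty_0\big(B_{(r_j+r_{j+1})/2}(\xi_0)\big)$ satisfying~$\varphi_j\equiv 1$ on~$B_{j+1}$ and~$|\nabla_{\h^n}\varphi_j|\leq \c\,2^j/r$. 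The iterated quantity is~$Y_j := \dashint_{B_j}w_j^p\,{\rm d}\xi$.

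First I would apply Theorem~\ref{teo_caccioppoli} to bound~$[w_j\varphi_j]_{W^{s,p}(\h^n)}^p$. The local Dirichlet-type piece contributes~$\c\,2^{jp}r^{-sp}\int w_j^p$. The long-range supremum is controlled, for~$\xi\in\h^n\smallsetminus B_j$ and~$\eta\in\textup{supp}\,\varphi_j$, via the geometric inequality
$$
|\eta^{-1}\circ \xi|_{\h^n} \geq \c\,2^{-j}\,|\xi_0^{-1}\circ \xi|_{\h^n}
$$
(a consequence of Proposition~\ref{prop1} and the~$\c\,r/2^j$-separation of~$\textup{supp}\,\varphi_j$ from~$\partial B_j$), the pointwise splitting~$(u-k_j)_+^{p-1}\leq \c(u_+^{p-1}+k^{p-1})$, and Lemma~\ref{lem1}, giving the bound~$\c\,2^{j(Q+sp)}r^{-sp}\big(T^{p-1}+k^{p-1}\big)$. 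The~$L^1$-factor~$\int w_j\varphi_j^p$ appearing in this term and in the~$\|f\|_{L^\infty}$-contribution from~\eqref{bound f} is then dominated by~$\c\,(2^j/k)^{p-1}\int w_{j-1}^p$, using the measure bound~$|\{w_j>0\}\cap B_j|\leq \c\,(2^j/k)^p\int w_{j-1}^p$ and H\"older's inequality. Invoking~$k\geq \delta T$ to absorb the~$T^{p-1}$-piece into the constant, we arrive at
$$
[w_j\varphi_j]^p_{W^{s,p}(\h^n)} \leq \c\,\delta^{-(p-1)}\,2^{j(Q+sp)}\,r^{-sp}\int_{B_{j-1}}w_{j-1}^p\,{\rm d}\xi.
$$

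Next, Theorem~\ref{sobolev} applied to~$w_j\varphi_j\in W^{s,p}_0(B_j)$ together with H\"older's inequality between the exponents~$p$ and~$p^*=Qp/(Q-sp)$ on~$\{w_{j+1}>0\}\cap B_{j+1}$ yields the geometric recursion
$$
Y_{j+1}\leq \c\,\delta^{-(p-1)}\,b^j\,k^{-\frac{sp^2}{Q}}\,Y_j^{1+\frac{sp}{Q}},
$$
for some universal~$b>1$. The classical iteration lemma then forces~$Y_j\to 0$ as soon as~$Y_0\leq \c^{-1}\delta^{(p-1)Q/(sp)}k^p$, which is precisely what the choice of~$k$ above guarantees; hence~$u\leq k$ a.e.~on~$B_{r/2}$. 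The main technical obstacle is the bookkeeping of the tail contribution: one must split the long-range piece of the Caccioppoli right-hand side into a~$T^{p-1}$-part (absorbed by the~$\delta T$-component of~$k$, producing the factor~$\delta^{-(p-1)}$ in the recursion constant) and a~$k^{p-1}$-part (harmless, contributing only to the local-type side). Balancing this absorption against the Moser-type smallness threshold is exactly what pins down the sharp exponent~$(p-1)Q/(sp^2)$ on~$\delta$ in the final estimate. A minor secondary point is that the nonlinear source~$f\equiv f(\xi,u)$ enters, by~\eqref{bound f}, only as a bounded~$L^\infty$-term and produces a strictly subcritical correction, handled along the lines of~\cite{MPPP21}.
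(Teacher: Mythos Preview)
The paper does not actually prove Theorem~\ref{teo_bdd}: it is stated in Section~\ref{sec_preliminaries} as a known result, with the sentence ``All the following theorems are contained in the recent works~\cite{MPPP21,PP21}'' preceding it. So there is no in-paper proof to compare against; the reference~\cite{MPPP21} is the intended source.

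That said, your De~Giorgi iteration is exactly the scheme used in~\cite{MPPP21} (which in turn follows~\cite{DKP16}): Caccioppoli-with-tail on the truncations~$w_j$, the geometric long-range estimate via Remark~\ref{sec3.1_rem2}/Proposition~\ref{prop1}, Sobolev embedding to pass from~$p$ to~$p^*$, and the standard fast-convergence lemma. Your bookkeeping of the~$\delta$-exponent is correct: absorbing the~$T^{p-1}$-piece of the tail via~$k\ge \delta T$ produces the factor~$\delta^{-(p-1)}$ in the recursion constant, and the smallness threshold of the iteration lemma then forces~$k\gtrsim \delta^{-(p-1)Q/(sp^2)}\big(\dashint_{B_r}u_+^p\big)^{1/p}$, which is precisely~\eqref{eq_bdd}. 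One small omission: you implicitly use~$p^*=Qp/(Q-sp)$, hence~$sp<Q$; for~$sp\ge Q$ the embedding is into H\"older spaces (cf.~\cite{AM18}) and local boundedness is immediate, or one may run the same iteration with any finite exponent in place of~$p^*$. This is routine and is handled the same way in~\cite{MPPP21}.
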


   \vspace{2mm}
    The parameter~$\delta$ in the theorem above gives a precise interpolation between the local and nonlocal term. Combining together the theorems above with a {\it Logarithmic-Lemma with tail}~\cite[Lemma~1.4]{MPPP21}, we can prove that weak solutions to~\eqref{problema2} enjoy oscillation estimates, that clearly holds H\"older continuity, and a natural (nonlocal) Harnack inequality. 
    
    \begin{theorem}[{\bf H\"older continuity}]\label{teo_holder}
    	Let $s \in (0,1)$, $p \in (1,\infty)$, and let $u$ be a weak solution to~{\rm \eqref{problema2}}. Then $u$ is locally H\"older continuous in~$\Omega$. In particular, if $B_{2r} \equiv B_{2r}(\xi_0) \Subset \Omega$ then there are constants $\alpha < sp/(p-1)$ and $\c>0$, both depending only on $n,p,s,\Lambda$ and $\|f\|_{L^\infty(B_r)}$, such that 
    	\begin{equation}
    		\osc\limits_{B_\varrho} \, u 
    		\, \leq\,  \c\, \left(\frac{\varrho}{r}\right)^\alpha \left[\textup{Tail}(u;\xi_0,r) + \left(\,\dashint_{B_{2r}}|u|^p \,{\rm d}\xi\right)^\frac{1}{p}\right],
    	\end{equation} 
    	for every $\varrho\in (0,r)$, where  $\textup{Tail}(u_+;\xi_0,r/2)$ is defined in~\eqref{tail}.
    \end{theorem}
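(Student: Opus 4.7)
The plan is to adapt the classical iterative oscillation-decay scheme of De~Giorgi--type for nonlocal equations (as developed in the Euclidean setting in~\cite{DKP14,DKP16}) to the Heisenberg geometry, using the three building blocks already available in the paper: the Caccioppoli inequality with tail (Theorem~\ref{teo_caccioppoli}), the local boundedness estimate (Theorem~\ref{teo_bdd}), and the logarithmic lemma~\cite[Lemma~1.4]{MPPP21}. Fix $B_{2r} \equiv B_{2r}(\xi_0)\Subset\Omega$ and set
$$
M := \textup{Tail}(u;\xi_0,r) + \Bigl(\,\dashint_{B_{2r}}|u|^p\,{\rm d}\xi\Bigr)^{1/p}.
$$
The strategy is to build, for a suitably small $\sigma\in(0,1/4)$ and a suitably small exponent $\alpha\in\bigl(0,sp/(p-1)\bigr)$, a sequence of radii $r_j := \sigma^j r$ and nested intervals $[m_j,M_j]$ with $M_j-m_j =: \omega_j = \sigma^{\alpha j}\omega_0$, where $\omega_0 \geq c_\ast M$, such that $u(B_{r_j})\subset[m_j,M_j]$ for every $j\geq 0$. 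The desired oscillation estimate then follows by interpolating between the discrete scales $r_j$ and $r_{j+1}$ in the standard way.

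The base case $j=0$ is obtained by applying Theorem~\ref{teo_bdd} both to $u$ and to $-u$ on $B_r$, which yields $\osc_{B_r}u \leq c\,M \leq \omega_0$ upon choosing $\omega_0$ large enough. For the inductive step, assuming $u(B_{r_j})\subset[m_j,M_j]$, set $v := u - (M_j+m_j)/2$ so that $|v|\leq\omega_j/2$ on $B_{r_j}$. The logarithmic lemma applied to either $v+\omega_j/2 + d_j$ or $-v+\omega_j/2 + d_j$ (with a small shift $d_j>0$) shows that, up to swapping $v$ with $-v$, one has
$$
\bigl|\{\,v \leq -\omega_j/4\,\}\cap B_{r_j/2}\bigr| \geq \gamma\,|B_{r_j/2}|
$$
for some absolute $\gamma>0$, provided the nonlocal tail of $v_{\pm}$ is under control. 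Feeding this information into a De~Giorgi--type iteration based on Theorem~\ref{teo_caccioppoli}, written for the truncations $(v+\omega_j/4 - k)_+$ over geometrically decreasing levels $k_i$ converging to $0$, one deduces that $\sup_{B_{r_{j+1}}} v \leq (1-\eta)\,\omega_j/2$ for some $\eta\in(0,1)$, whence $\osc_{B_{r_{j+1}}} u \leq (1-\eta/2)\,\omega_j$. Choosing $\sigma^\alpha = 1-\eta/2$ closes the induction.

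The main obstacle --- and the place where the Heisenberg geometry plays no role beyond being absorbed into the kernel constants via Proposition~\ref{prop1} --- is the control of the nonlocal tails along the iteration. Since the tail of $v_{\pm}$ at scale $r_j$ a priori carries contributions from the entire $\h^n\setminus B_{r_j}$, one must split the complement into the telescoping annuli $B_{r_{j-k-1}}\setminus B_{r_{j-k}}$ for $k=0,\dots,j-1$ together with the outer region $\h^n\setminus B_r$. On each annulus the inductive bound $|v|\leq \omega_{j-k}/2 + c\omega_j \leq c\,\sigma^{-\alpha k}\omega_j$ is available, and Lemma~\ref{lem1} converts the annular integrals into a geometric series in $\sigma^{sp/(p-1)-\alpha}$, which converges precisely because $\alpha < sp/(p-1)$. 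The outer piece is estimated by $c\,M\leq c\,\omega_0\leq c\,\sigma^{-\alpha j}\omega_j$, and the resulting constant is reabsorbed by choosing $\sigma$ small before $\alpha$. With these quantitative tail bounds in hand, the logarithmic lemma and the Caccioppoli iteration can be applied uniformly in~$j$, and the induction carries through; standard interpolation between $r_{j+1}$ and $r_j$ upgrades the discrete decay to the continuous H\"older-type estimate claimed in the statement.
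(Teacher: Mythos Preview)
The paper does not actually prove Theorem~\ref{teo_holder}: it is stated in Section~\ref{sec_preliminaries} as a preliminary result quoted from the companion papers~\cite{MPPP21,PP21} (the section opens with ``All the following theorems are contained in the recent works~\cite{MPPP21,PP21}''), so there is no proof in the present paper to compare against. That said, your proposal is the standard De~Giorgi oscillation-decay scheme of~\cite{DKP14,DKP16} transported to the Heisenberg setting, which is precisely the strategy carried out in~\cite{MPPP21}; the telescoping tail control via the condition $\alpha<sp/(p-1)$ and the use of the logarithmic lemma to produce the density alternative are exactly the right ingredients, and your sketch is correct.
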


       \begin{theorem}[{\bfseries Nonlocal weak Harnack inequality}]\label{weak harnack}
   	For any $s \in (0,1)$ and any $p \in (1,\infty)$,	 let $u \in W^{s,p}(\h^n)$ be a weak supersolution to~\eqref{problema2} such that $u \geq 0$ in $B_R \subseteq \Omega$.
   	Then, for any $B_r$ such that $B_{6r}\subset B_R$, it holds
   	\begin{equation}\label{eq_weakharnack}
   		\left( \ \dashint_{B_{r}}  u^t\,{\rm d}\xi\,\right)^\frac{1}{t} \ \leq \ c\,\inf_{B_{\frac{3}{2}r}} u 
   		\, +c \left(\frac{r}{R}\right)^\frac{sp}{p-1} {\rm Tail}(u_-;\xi_0,R) \, + c\chi,
   	\end{equation}
   	where
   	$$
   	\chi = 
   	\begin{cases}
   		r^\frac{Qsp}{t(Q-sp)}\|f\|_{L^\infty(B_R)}^\frac{Q}{t(Q-sp)} \qquad \text{for} \ t<\frac{Q(p-1)}{Q-sp} &\quad  \text{if} \ sp<Q,\\[0.8ex]
   		r^\frac{Q(s-\epsilon)}{t\epsilon}\|f\|_{L^\infty(B_R)}^\frac{s}{t\epsilon}  \qquad \text{for any} \ s-{Q}/{p}< \epsilon<s \ \text{and}\  t<\frac{(p-1)s}{\epsilon} &\quad  \text{if} \ sp\geq Q,
   	\end{cases}
   	$$
   	${\rm Tail}(\cdot)$ is defined in~\eqref{tail}, and $u_-:=\max(-u,0)$ is the negative part of the function~$u$. The constant~$c$ depends only on $n$, $s$, $p$, and the structural constant~$\Lambda$ defined in~\eqref{def_lambda}.
   \end{theorem}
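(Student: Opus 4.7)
The plan is to adapt the DiCastro-Kuusi-Palatucci strategy for the nonlocal weak Harnack inequality to the Heisenberg setting, carefully tracking the effect of the inhomogeneous forcing~$f$. Work with the shifted function $\bar u := u + d$ where $d>0$ is chosen to absorb both the nonlocal influence of the negative part of $u$ and the $L^\infty$ norm of $f$; specifically, one sets
$$d \, := \, \left(\tfrac{r}{R}\right)^{\frac{sp}{p-1}}\textup{Tail}(u_-;\xi_0,R) \, + \, \chi,$$
so that $\bar u$ is a nonnegative weak supersolution on $B_{6r}$ whose tail is controlled by~$d$ itself. This reduction isolates the ``dangerous'' tail and forcing contributions in a single additive shift and reduces the problem to a homogeneous-looking weak Harnack estimate for $\bar u$.

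The first key step is to apply the Logarithmic Lemma with tail (Lemma~1.4 in~\cite{MPPP21}) to $\log\bar u$ on concentric balls $B_\varrho\subset B_{3r}$. Together with the definition of~$d$, this produces the BMO-type bound
$$\int_{B_\varrho}\int_{B_\varrho}\bigl|\log\bar u(\xi)-\log\bar u(\eta)\bigr|^p\,|\eta^{-1}\!\circ\xi|_{\h^n}^{-Q-sp}\dxieta \, \leq \, C\,\varrho^{Q-sp}.$$
Combining this with the fractional Poincar\'e inequality of Proposition~\ref{poincare} shows that $\log\bar u$ has bounded mean oscillation on $B_{3r}$, from which a standard John--Nirenberg argument yields the existence of a small exponent $\bar t_0 = \bar t_0(n,p,s,\Lambda)>0$ such that
$$\left(\,\dashint_{B_{2r}}\bar u^{\bar t_0}\,{\rm d}\xi\right)^{\!1/\bar t_0}\left(\,\dashint_{B_{2r}}\bar u^{-\bar t_0}\,{\rm d}\xi\right)^{\!1/\bar t_0}\leq C.$$

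The second key step is a reverse Moser iteration for negative powers. Apply the Caccioppoli estimate of Theorem~\ref{teo_caccioppoli} (after checking that, for a nonnegative supersolution, $\bar u^{1-q}$ with $q>1$ is an admissible test-function up to the usual approximation) on a decreasing sequence of radii between $2r$ and $\tfrac{3}{2}r$, using the local boundedness of Theorem~\ref{teo_bdd} to handle the nonlocal tail terms. Iteration together with a Sobolev/Poincar\'e embedding on $\h^n$ (Theorem~\ref{sobolev}, or the trivial embedding when $sp\ge Q$) produces a reverse-H\"older chain culminating in
$$\inf_{B_{3r/2}}\bar u \, \geq \, c\,\left(\,\dashint_{B_{2r}}\bar u^{-\bar t_0}\,{\rm d}\xi\right)^{\!-1/\bar t_0}.$$
Coupling this with the JN inequality above gives the weak Harnack bound for the exponent $t=\bar t_0$, and a straightforward Moser iteration on positive powers then raises~$\bar t_0$ to any admissible~$t$ in the stated range. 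Returning to~$u = \bar u - d$ yields the claimed inequality, the tail and $\chi$ terms arising exactly from the choice of~$d$.

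The main obstacle will be quantifying the inhomogeneous contribution: the Caccioppoli inequality of Theorem~\ref{teo_caccioppoli} picks up $\|f\|_{L^\infty}$ multiplied by an integral of~$\bar u$, so during the negative-exponent iteration one must carefully absorb this term into~$d$ via the Sobolev embedding, producing the dichotomy in~$\chi$ according to whether $sp<Q$ (where $p^\ast = Qp/(Q-sp)$ is finite and one uses Theorem~\ref{sobolev}) or $sp\geq Q$ (where one trades $s$ for any $\epsilon\in(s-Q/p,s)$ and uses the embedding $W^{\epsilon,p}\hookrightarrow L^{p^\ast_\epsilon}$). A secondary technical issue is that all covering and iteration arguments must be carried out with respect to the homogeneous norm on~$\h^n$; however, Proposition~\ref{prop1} together with the tail estimate of Lemma~\ref{lem1} makes these steps structurally identical to their Euclidean analogues, so no essentially new geometric ingredient is needed beyond what has already been set up in Section~\ref{sec_preliminaries}.
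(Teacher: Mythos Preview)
The paper does not prove this theorem. Theorem~\ref{weak harnack} is stated in Section~\ref{sec_preliminaries} as a known result, with the explicit disclaimer that ``all the following theorems are contained in the recent works~\cite{MPPP21,PP21}''; no proof (or even a sketch) is given here. So there is no ``paper's own proof'' to compare your proposal against.

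That said, your outline is the standard route and is essentially the strategy carried out in~\cite{PP21}: shift by~$d$ to absorb the tail of~$u_-$ and the forcing, apply the logarithmic estimate to get a BMO bound on~$\log\bar u$, use John--Nirenberg to obtain the crossover inequality for a small exponent~$\bar t_0$, and then run a reverse Moser iteration for negative powers to reach the infimum. The dichotomy in~$\chi$ according to whether~$sp<Q$ or~$sp\ge Q$ indeed arises from the Sobolev embedding used in the iteration. One minor inaccuracy: you do not need Theorem~\ref{teo_bdd} to handle the tail terms during the negative-exponent iteration; the tail contribution is controlled directly by the choice of~$d$ and the Caccioppoli/log estimates, not by a local boundedness result. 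Otherwise the sketch is sound and matches the argument in the cited reference.
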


  \section{The obstacle problem}\label{sec3.1}
  We divide this section into two parts; proving first existence of solutions then their regularity (up to the boundary) properties.
  \subsection{Existence of solutions}
  Let us begin splitting the operator~$\mathbf{A}$ in~\eqref{A} as follows
   $$
   \langle\mathbf{A}_1(u),v\rangle := \int_{\Omega'}\int_{\Omega'}\frac{ L (u(\xi),u(\eta))(v(\xi)-v(\eta))}{\dd(\eta^{-1}\circ \xi)^{Q+sp}} \dxieta - \int_{\Omega}f(\xi,u) v(\xi) \dxi,
   $$
   and
   $$
   \langle\mathbf{A}_2(u),v\rangle := 2\int_{\h^n \smallsetminus \Omega'}\int_{\Omega} \frac{L (u(\xi),g(\eta))v(\xi)}{\dd(\eta^{-1}\circ \xi)^{Q+sp}} \dxieta,
   $$
   The following observation is useful in estimating the second term~$\mathbf{A}_2$.

   \begin{rem}\label{sec3.1_rem2}
   	Since~$\Omega \Subset \Omega'$ there exist~$\xi_0 \in \Omega$ and~$ r \in (0, r_0)$, with~$r_0 := \dist(\Omega,\partial \Omega')$, such that~$\Omega \Subset B_r(\xi_0) \Subset \Omega'$. Hence, for any~$\eta \in \h^n \smallsetminus B_r(\xi_0)$ and any~$\xi \in \Omega $, it holds
   	\begin{equation}\label{sec3.1_e3_norme}
   		\frac{|\eta^{-1}\circ \xi_0|_{\h^n}}{|\eta^{-1}\circ \xi|_{\h^n}} \leq 1+ \frac{|\xi^{-1}\circ \xi_0|_{\h^n}}{|\eta^{-1}\circ \xi_0|_{\h^n}-|\xi^{-1}\circ \xi_0|_{\h^n}} \leq  \c.
   	\end{equation}
   	Thus, by the inequality above, considering a function~$g \in L^{p-1}_{sp}(\h^n)$ and a function~$v \in L^p(\Omega)$, we have that
   	\begin{align*}
   		\int_{\h^n \smallsetminus \Omega'}\int_\Omega |g(\eta)|^{p-1}|v(\xi)| & |\eta^{-1}\circ \xi|_{\h^n}^{-Q-sp} \dxieta \\
   		& \leq \c\int_{\h^n \smallsetminus B_r(\xi_0)}\int_\Omega |g(\eta)|^{p-1}|v(\xi)| |\eta^{-1}\circ \xi_0|_{\h^n}^{-Q-sp} \dxieta\\
   		& \leq \int_{\h^n \smallsetminus B_r(\xi_0)}|g(\eta)|^{p-1}|\eta^{-1} \circ \xi_0|^{-Q-sp}\deta \int_{\Omega} |v(\xi)| \dxi\\
   		& \leq \c \, r^{-sp} {\rm Tail}(g;\xi_0,r)^{p-1}\|v\|_{L^p(\Omega)},
   	\end{align*}
   	by Jensen's Inequality, with~$\c=\c(p,\Omega)>0$.
   \end{rem} 

   \begin{proof}[Proof of Theorem~{\rm \ref{t1 obst existence}}]
    We begin proving that the operator~$\mathbf{A}$, defined in~\eqref{A}, is monotone, weakly continuous and coercive on the set~$\mathcal{K}_{g,h}(\Omega,\Omega')$.
    We first prove that the operator~$\mathbf{A}$ in~\eqref{A} is monotone, weakly continuous and coercive.
    
   First we prove that~$\mathbf{A}$ is monotone, i.~\!e. 
   \begin{equation}\label{A monotone}
   \langle \mathbf{A}(u) -\mathbf{A}(v), u-v\rangle \geq 0 \qquad \mbox{for every~$u,v \in \mathcal{K}_{g,h}(\Omega,\Omega')$.}
   \end{equation}
   	
   	Let us fix two functions~$u,v \in \mathcal{K}_{g,h}(\Omega,\Omega')$ and consider separately~$\mathbf{A}_1$ and~$\mathbf{A}_2$. For~$\mathbf{A}_1$ we have
   	\begin{align*}
   		\langle & \mathbf{A}_1(u) -\mathbf{A}_1(v), u-v\rangle \\*
     & = \int_{\Omega'} \int_{\Omega'} (L   (u(\xi),u(\eta)) - L(v(\xi),v(\eta)) )\notag\\*
   		& \hspace{1.5cm} \times(u(\xi) - u(\eta)-v(\xi)+v(\eta)) \dd(\eta^{-1} \circ \xi)^{-Q-sp}\dxieta\notag\\
   		& \quad -\int_{\Omega}(f(\xi,u)-f(\xi,v))(u(\xi)-v(\xi)) \dxi.
   	\end{align*}
   	By inequality~\eqref{s1 e2} we have that we only need to prove that
   	\begin{equation}\label{sec3.1_lem4_e1}
   		\int_{\Omega}(f(\xi,u)-f(\xi,v))(u(\xi)-v(\xi)) \dxi \leq 0.	
   	\end{equation}
   	But by condition~\eqref{monotonicity} there exists a negligible set~$E$ such that for any~$\xi \in \Omega\smallsetminus E$ it holds
   	$$
   	\big(f(\xi,u(\xi))-f(\xi,v(\xi))\big)\big(u(\xi)-v(\xi)\big) \leq 0.
   	$$
   	Hence~\eqref{sec3.1_lem4_e1} follows. Similarly, proceeding as in~\cite{KKP16}, it follows that~$\langle \mathbf{A}_2(u) -\mathbf{A}_2(v), u-v\rangle \geq 0$. Hence,~\eqref{A monotone} is proven.
   	
   	\vspace{2mm}
   	
   	Now, let us show that~$\mathbf{A}$ is weakly continuous. Let~$\{u_j\}$ be a sequence in~$\mathcal{K}_{g,h}(\Omega,\Omega')$ converging to~$u \in \mathcal{K}_{g,h}(\Omega,\Omega')$ in~$W^{s,p}(\Omega')$. We want to prove that
   	\begin{equation}\label{A weak cont}
   		 \langle \mathbf{A}(u_j ) -\mathbf{A}(u),v\rangle \rightarrow 0 \qquad \mbox{for all~$v \in W^{s,p}(\Omega')$.}
   	\end{equation}	 
     Let~$v \in W^{s,p}(\Omega')$. We estimate separately~$\mathbf{A}_1$ and~$\mathbf{A}_2$.
   	\begin{align*}
   		& |\langle \mathbf{A}_1(u_j)  -\mathbf{A}_1( u), v\rangle|\notag\\
   		&  \leq \int_{\Omega'}\int_{\Omega'} |L(u_j(\xi),u_j(\eta))-L(u(\xi),u(\eta))||v(\xi)-v(\eta)| \dd(\eta^{-1} \circ \xi)^{-Q-sp} \dxieta\notag\\
   		& \quad +  \int_{\Omega}|f(\xi,u_j)-f(\xi,u)||v(\xi)| \dxi \notag\\
   		& =: I_1 +I_2.
   	\end{align*}
   	Consider~$I_2$. Since~$u_j$ converges to~$u$ in~$W^{s,p}(\Omega')$, it also converges to~$u$ in~$L^p(\Omega)$.

   	Thus, there exists a subsequence~$\{u_{j_k}\}_k$ which converges to~$u$ almost everywhere in~$\Omega$. Moreover, since by condition~\eqref{reg f} $x \mapsto f(\xi,x)$ is continuous for a.\!~e.~$\xi \in \Omega$, we have that
   	$$
   	f(\xi,u_{j_k}) \rightarrow f(\xi,u) \qquad \mbox{for a.\!~e. } \xi \in \Omega.
   	$$  
   	Hence, since the sequence $\{f(\cdot,u_{j_k})\}$ has $L^{p/(p-1)}(\Omega)$-norms uniformly bounded thanks to~\eqref{bound f}, we have that $f(\xi,u_{j_k}) \rightarrow f(\xi,u)$ weakly in~$L^{p/(p-1)}(\Omega)$. Since the weak limit is independent of the choice of the subsequence it follows that~$f(\xi,u_{j}) \rightarrow f(\xi,u)$ weakly in~$L^{p/(p-1)}(\Omega)$. Then,
   	$$
   	I_2 \xrightarrow{j \rightarrow \infty} 0.
   	$$                  	                                                                   	Let us focus on~$I_1$. By Lemma~\ref{s1 lem2} and H\"older's Inequality we obtain, for~$1<p\leq 2$,
   	\begin{align*}
   		I_1 & \leq \c \int_{\Omega'}\int_{\Omega'}|u_j(\xi)-u(\xi)-(u_j(\eta)-u(\eta))|^{p-1}|v(\xi)-v(\eta)| \frac{\dxieta}{|\eta^{-1}\circ \xi|_{\h^n}^{Q+sp}}\notag\\
   		& \leq \c \, \|u_j-u\|_{W^{s,p}(\Omega')}^{p-1}\|v\|_{W^{s,p}(\Omega')} \xrightarrow{j \rightarrow \infty}0.
   	\end{align*}
   	When~$p \geq 2$ by~\eqref{s1 lem2}, proceeding as done in~\cite{KKP16}, we eventually arrive at
    $$
   		I_1 \leq \c \, (\|u_j\|_{W^{s,p}(\Omega')}+\|u\|_{W^{s,p}(\Omega)})^{p-2}\|u_j-u\|_{W^{s,p}(\Omega')}\|v\|_{W^{s,p}(\Omega')},
   	$$
   	which also vanishes when~$j \rightarrow \infty$. Also for~$\mathbf{A}_2$
   	we split the case~$1 < p \leq 2$ and~$p>2$. For~$1 < p \leq 2$ using again Lemma~\ref{s1 lem2}, we obtain
   	\begin{align*}
   		& |\langle \mathbf{A}_2(u_j)  -\mathbf{A}_2( u), v\rangle|\\
   		& \leq 2\int_{\h^n \smallsetminus \Omega'}\int_\Omega |L(u_j(\xi),g(\eta)) -L(u(\xi),g(\eta))||v(\xi)| \dd(\eta^{-1} \circ \xi)^{-Q-sp} \dxieta\\
   		& \leq \c \int_{\h^n \smallsetminus \Omega'}\int_\Omega |u_j(\xi)-u(\xi)|^{p-1}|v(\xi)|\frac{\dxieta}{|\eta^{-1}\circ \xi|_{\h^n}^{Q+sp}}\\
   		& \leq \c  \,\|u_j-u\|_{W^{s,p}(\Omega')}^{p-1}\|v\|_{W^{s,p}(\Omega')} \xrightarrow{j \rightarrow \infty} 0.
   	\end{align*}
   	When~$p \geq 2$, proceeding as done in~\cite{KKP16}, by Remark~\ref{sec3.1_rem2}, we have that
   	\begin{align*}
   		&|\langle \mathbf{A}_2(u_j) -\mathbf{A}_2 (u), v\rangle|\\*
     & \leq \c \, \|u_j-u\|_{W^{s,p}(\Omega')}^{p-1}\|v\|_{W^{s,p}(\Omega')}\\
   		& \quad +\c \,\|u_j-u\|_{W^{s,p}(\Omega')}\|u\|_{W^{s,p}(\Omega')}^{p-2}\|v\|_{W^{s,p}(\Omega')}\\
   		& \quad + \c \, r^{-sp}{\rm Tail}(g;\xi_0,r)^{p-2}\|u_j-u\|_{W^{s,p}(\Omega')}\|v\|_{W^{s,p}(\Omega')}\xrightarrow{j \rightarrow \infty} 0.
   	\end{align*}
   	Thus,~\eqref{A weak cont} holds.
   	
   	\vspace{2mm}
   	
   	In the end, let us prove the coercivity, i.~\!e. we prove that there exists a function~$v \in \mathcal{K}_{g,h}(\Omega,\Omega')$ such that
   	\begin{equation}\label{A coercive}
   	\frac{\langle \mathbf{A}(u)-\mathbf{A}(v),u-v \rangle}{\|u-v\|_{W^{s,p}(\Omega')}} \rightarrow \infty \quad {\rm as} \quad \|u\|_{W^{s,p}(\Omega')} \rightarrow \infty.
   	\end{equation}
   	Since we have assumed that the class~$\mathcal{K}_{g,h}(\Omega,\Omega')$ is not empty, there exists at least one~$v \in \mathcal{K}_{g,h}(\Omega,\Omega')$. Let us fix such~$v$. Following the same chain of estimates used in~\cite{KKP16} it can be proved that there exists a constant~$\c$ independent of~$u$ such that
   	
   	\begin{equation}\label{sec3.1_lem4_e5}
   		|\langle \mathbf{A}(u)-\mathbf{A}(v),v \rangle| \leq \c \, \|u\|_{W^{s,p}(\Omega')}^{p-1}+\c.
   	\end{equation}
   	We prove that the contribution from~$\langle \mathbf{A}(u)-\mathbf{A}(v),u \rangle$ dominates when~$\|u\|_{W^{s,p}(\Omega')}$ is large. As customary we split the operator~$\mathbf{A}$ into~$\mathbf{A}_1$ and~$\mathbf{A}_2$. Then,
   	\begin{align*}
   		&\langle \mathbf{A}_1(u)  -\mathbf{A}_1 (v), u\rangle\\
   		& =\int_{\Omega'}\int_{\Omega'} (L(u(\xi),u(\eta))-L(v(\xi),v(\eta)))(u(\xi)-u(\eta)) \dd(\eta^{-1} \circ \xi)^{-Q-sp} \dxieta\notag\\
   		& \quad - \int_{\Omega}(f(\xi,u)-f(\xi,v))u(\xi)\dxi\notag\\
   		& \geq \c \int_{\Omega'}\int_{\Omega'}\frac{|u(\xi)-u(\eta)|^p}{|\eta^{-1}\circ \xi|_{\h^n}^{Q+sp}}\dxieta -\c \int_{\Omega'}\int_{\Omega'}\frac{|v(\xi)-v(\eta)|^{p-1}|u(\xi)-u(\eta)}{|\eta^{-1}\circ \xi|_{\h^n}^{Q+sp}}\dxieta\\
   		& \quad- \|f\|_{L^\infty(\Omega)}\int_{\Omega'}|u(\xi)|\dxi\\
   		& \geq \c \, [u-g]_{W^{s,p}(\Omega')}^p -\c \ [g]_{W^{s,p}(\Omega')}^p -\c \ [v]_{W^{s,p}(\Omega')}^{p-1}[u]_{W^{s,p}(\Omega')} - \c  \ \|u\|_{L^p(\Omega')}^{p-1}\\
   		& \geq \c \ \|u\|_{W^{s,p}(\Omega')}^p -\c  \ \|g\|_{W^{s,p}(\Omega')}^p -\c \ \|v\|_{W^{s,p}(\Omega')}^{p-1}\|u\|_{W^{s,p}(\Omega')}
   	    - \c \ \|u\|_{W^{s,p}(\Omega')}^{p-1}
   	\end{align*}
   	where we have used H\"older's Inequality and the Sobolev embedding Theorem~\ref{sobolev} and where the constant~$\c$ depends on~$n,p,s,\|f\|_{L^\infty(\Omega)}$ and~$\Omega'$ and the structural constant~$\Lambda$. On the other hand, proceeding as in~\cite{KKP16} and recalling Remark~\ref{sec3.1_rem2}, for~$\mathbf{A}_2$, we have 
   	\begin{align*}
   		\langle \mathbf{A}_2( u )& -\mathbf{A}_2 (v),u \rangle \\
   		& \geq - \c  \ \|u\|_{L^p(\Omega')}^{p-1}\|v\|_{L^p(\Omega')} -\c  \ r^{-sp}{\rm Tail}(g,\xi_0,r)^{p-1}\|v\|_{L^p(\Omega')} -\|v\|^p_{L^p(\Omega')}.
   	\end{align*}
   	Combining all previous estimates we obtain that
   	$$
   	\langle \mathbf{A}(u) -\mathbf{A}(v),u-v \rangle \geq \c \ \|u\|^p_{W^{s,p}(\Omega')} -\c \ \|u\|^{p-1}_{W^{s,p}(\Omega')}-\c \ \|u\|_{W^{s,p}(\Omega)} -\c,
   	$$
   	where the constant~$\c$ is independent of~$u$. Hence, the proof of~\eqref{A coercive} is complete.
    
    \vspace{2mm}
    Then, since~$\mathbf{A}$ in~\eqref{A} is monotone, weakly continuous and coercive in~$\mathcal{K}_{g,h}(\Omega,\Omega')$, the existence and uniqueness of the solution~$u$ to the obstacle problem can be proved extending to our setting the same argument used in Theorem~1 in~\cite{KKP16}.
    \end{proof}

   We now prove Corollary~\ref{sec3.1_corol1}.
   \begin{proof}[Proof of Corollary~{\rm \ref{sec3.1_corol1}}]
   Let us prove that the solution~$u$ to the obstacle problem in~$\mathcal{K}_{g,h}(\Omega,\Omega')$ is a weak supersolution to~\eqref{problema2} according to Definition~\ref{solution to inhomo pbm}. First of all note that clearly~$u \in W^{s,p}_{\rm loc}(\Omega)  \cap L^{p-1}_{sp}(\h^n)$. Moreover, for any nonnegative~$\psi \in C^\infty_0(\Omega)$ the function  $v:= u+\psi \in \mathcal{K}_{g,h}(\Omega,\Omega')$. Hence,
   	\begin{align*}
   		0 & \leq \langle \mathbf{A}(u), v-u \rangle\\
   		& = \int_{\h^n}\int_{\h^n} L(u(\xi),u(\eta)) (\psi(\xi)-\psi(\eta)) \dd(\eta^{-1} \circ \xi)^{-Q-sp} \dxieta \\*
     &\quad -\int_{\h^n} f(\xi,u) \psi(\xi) 
   		\dxi.                   
   	\end{align*}
   	Then,~$u$ is a weak supersolution according to Definition~\ref{solution to inhomo pbm}. The last statement of Corollary~\ref{sec3.1_corol1} follows repeating the argument of Corollary~1 in~\cite{KKP16}.
    \end{proof}

   \subsection{Proof of Theorem~\ref{boundary reg}}
    We separately consider two cases: the case of the interior regularity and the case of the boundary regularity.
   \subsubsection{Interior regularity}   
   In order to prove that solutions to the obstacle problem are H\"older continuous or simply continuous inside~$\Omega$ we begin proving a local boundedness result.
   
   \begin{theorem}\label{sec4_thm1}
   	Let~$s \in (0,1)$,~$p \in (1,\infty)$ and, under hypothesis~\eqref{reg f},~\eqref{bound f} and~\eqref{monotonicity}, let~$u$ be the solution to the obstacle problem in~$\mathcal{K}_{g,h}(\Omega,\Omega')$. Assume that~$B_r \equiv B_r(\xi_0) \subset \Omega'$ and set
   	$$
   	M := \max \big(\esssup_{B_r \cap \Omega}h, \esssup_{B_r \smallsetminus \Omega} g\big).
   	$$
   	Here the interpretation is that~$\esssup_A \psi = -\infty$ if~$A = \emptyset$. If~$M$ is finite, then~$u$ is essentially bounded from above in~$B_{r/2} \equiv B_{r/2}(\xi_0)$ and
   	\begin{equation}\label{sec4_thm1_e0}
   		\esssup_{B_{r/2}}(u-m)_+ \leq \delta \ {\rm Tail}((u-m)_+;\xi_0,r/2)+ \c \  \delta^{-\gamma} \left( \, \dashint_{B_r} (u-m)_+^t \dxi\right)^\frac{1}{t},
   	\end{equation}
   	holds for all~$m \geq M$,~$t \in (0,p)$ and~$\delta \in (0,1]$ with constant~$\gamma = \gamma(n,p,s,t)>0$ and~$\c=\c(n,p,s,\Lambda,\|f\|_{L^\infty(B_r)})>0$.
   \end{theorem}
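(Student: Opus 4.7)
The plan is first to reduce the theorem to a restricted weak subsolution-type inequality for $(u-m)_+$ in~$B_r$, then to obtain an $L^p$ version of~\eqref{sec4_thm1_e0} via a Caccioppoli inequality with tail followed by a De~Giorgi iteration, and finally to self-improve the integrability exponent on the right-hand side from~$p$ to any $t\in(0,p)$.

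For the reduction, let $\phi$ be nonnegative with support in~$B_r$ and $\phi \leq (u-m)_+$ a.~e. The competitor $v := u-\phi$ lies in $\mathcal{K}_{g,h}(\Omega,\Omega')$: the assumption $m \geq \esssup_{B_r\smallsetminus\Omega}g$ together with $u=g$ outside~$\Omega$ forces $(u-m)_+=0$, and hence $\phi=0$, on $B_r\smallsetminus\Omega$, so $v=g$ in $\h^n\smallsetminus\Omega$; inside $\Omega\cap B_r$ the chain $v=u-\phi \geq u-(u-m)_+ = \min(u,m) \geq m \geq M \geq h$ gives $v\geq h$. Inserting $v-u=-\phi$ into $\langle \mathbf{A}(u),v-u\rangle \geq 0$ produces the one-sided inequality
\begin{equation*}
\int_{\h^n}\int_{\h^n}\frac{L(u(\xi),u(\eta))(\phi(\xi)-\phi(\eta))}{\dd(\eta^{-1}\circ\xi)^{Q+sp}}\dxieta \;\leq\; \int_\Omega f(\xi,u)\phi(\xi)\dxi.
\end{equation*}
For any $k \geq m$ and nonnegative cutoff $\varphi \leq 1$ with compact support in~$B_r$, the standard Caccioppoli test function $\phi := (u-k)_+\varphi^p$ is admissible since $\phi \leq (u-m)_+$. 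Repeating the proof of Theorem~\ref{teo_caccioppoli} verbatim yields the Caccioppoli estimate~\eqref{caccioppoli} for $w_+ := (u-k)_+$, with $\|f\|_{L^\infty(B_r)}$ entering the right-hand side via assumption~\eqref{bound f}.

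Armed with this energy bound, I would run a De~Giorgi iteration on shrinking balls $r_j := r/2 + 2^{-j-1}r$, increasing levels $k_j := m+(1-2^{-j})L$ with $L := \esssup_{B_{r/2}}(u-m)_+$, and cutoffs $\varphi_j$ adapted to $(B_{r_j},B_{r_{j+1}})$ with $|\nabla_{\h^n}\varphi_j| \lesssim 2^{j}/r$. Combining Caccioppoli with the fractional Sobolev embedding of Theorem~\ref{sobolev} applied to $(u-k_{j+1})_+\varphi_j$, controlling $\sup_{\textup{supp}\,\varphi_j}\int_{\h^n\smallsetminus B_{r_j}}(u-k_j)_+^{p-1}|\cdot|_{\h^n}^{-Q-sp}\dxi$ by $\textup{Tail}((u-m)_+;\xi_0,r/2)^{p-1}$ plus a polynomial error scaling like $(k_{j+1}-k_j)^{1-p}2^{j(Q+sp)}$, and bounding $|\{(u-k_{j+1})_+>0\}\cap B_{r_j}|$ through Chebyshev's inequality, one obtains a super-geometric recursion $A_{j+1} \leq C\,b^{j}\,A_j^{1+\kappa}$ for the normalized level-set energies $A_j$. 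The classical fast-decay lemma then yields the $L^p$-version of~\eqref{sec4_thm1_e0}, with the free parameter~$\delta$ arising precisely from the interpolation between local and nonlocal contributions built into the iteration.

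Finally, to lower the exponent from~$p$ to an arbitrary $t\in(0,p)$, I would apply a standard self-improvement step: from the already-established bound $L \leq \delta\,\textup{Tail}+C\delta^{-\gamma_0}(\dashint_{B_r}w^p\dxi)^{1/p}$ with $w:=(u-m)_+$, the trivial interpolation $w^p \leq L^{p-t}w^t$ together with Young's inequality absorbs a suitable fraction of $L$ on the left, delivering~\eqref{sec4_thm1_e0} with $\gamma=\gamma(n,p,s,t)$. The principal obstacle I foresee is the careful bookkeeping of the nonlocal tail contributions throughout the iteration: the tail of $(u-k_j)_+$ at scale $r_j$ must be controlled at each step by $\textup{Tail}((u-m)_+;\xi_0,r/2)$ plus a geometric error that does not destroy the recursion, and the presence of the auxiliary parameter~$\delta$ at the energy-inequality level is exactly what permits this bookkeeping to close while retaining the linear dependence on the tail stated in~\eqref{sec4_thm1_e0}.
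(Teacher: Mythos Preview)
Your reduction to a one-sided Caccioppoli inequality for $(u-k)_+$ with $k\geq m$ is exactly what the paper does: it tests the obstacle inequality with $v=u-(u-m-k)_+\p^p$, checks membership in $\mathcal{K}_{g,h}(\Omega,\Omega')$ just as you describe, and then invokes the proof of Theorem~\ref{teo_caccioppoli} to obtain the energy estimate. Likewise, your De~Giorgi iteration producing the $L^p$-version of~\eqref{sec4_thm1_e0} is precisely the content the paper imports as~\eqref{sec4_thm1_e1} (quoting Theorem~\ref{teo_bdd} from~\cite{MPPP21}). So up to that point the two arguments coincide.

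The gap is in your self-improvement step from exponent~$p$ to~$t$. You propose to use $w^p\leq L^{p-t}w^t$ pointwise with $L=\esssup_{B_{r/2}}w$, but the integral you want to bound is $\dashint_{B_r}w^p\dxi$, and on the annulus $B_r\smallsetminus B_{r/2}$ there is no reason for $w\leq L$. A single application of Young's inequality therefore does not absorb anything. The paper's cure is a covering-plus-iteration argument: it applies the $L^p$-estimate~\eqref{sec4_thm1_e1} not on the fixed pair $B_{r/2}\subset B_r$ but on every small ball $B_{\varrho/2}(\eta)\subset B_{\varrho}(\eta)$ with $\eta\in B_{\sigma'r}$ and $\varrho=(\sigma-\sigma')r$, for $\tfrac12\leq\sigma'<\sigma\leq1$. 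The tail at the moving center is split as in~\eqref{sec4_thm1_e3} into the fixed tail ${\rm Tail}((u_m)_+;\xi_0,r/2)$ plus $\sup_{B_{\sigma r}}(u_m)_+$, and Young's inequality is applied as in~\eqref{sec4_thm1_e4} with $\sup_{B_{\sigma r}}(u_m)_+$ (not $\sup_{B_{r/2}}$) on the right. This yields
\[
\sup_{B_{\sigma'r}}(u_m)_+\leq \tfrac12\sup_{B_{\sigma r}}(u_m)_+ + (\sigma-\sigma')^{-\beta}\big[\,\cdots\,\big],
\]
and a standard iteration lemma in~$\sigma$ then removes the sup on the right. The tail bookkeeping you flag as the main obstacle is indeed delicate, but it occurs \emph{here}, in the self-improvement over radii, rather than only inside the De~Giorgi recursion.
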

   \begin{proof}
   	Let us suppose that~$M <\infty$ and let us choose~$k \geq 0$, $m \geq M$ and a test function~$\p \in C^\infty_0(B_r)$, such that~$ 0 \leq \p \leq 1$. 
    Denote with~$v := u-(u-m-k)_+\p^p \in \mathcal{K}_{h,g}(\Omega,\Omega')$. Since~$u$ solves the obstacle problem, denoting with~$u_m := u-m$, we have that
   	\begin{align*}
   		0 & \leq \int_{\h^n}\int_{\h^n} \frac{L(u(\xi),u(\eta))(v(\xi)-u(\xi)-v(\eta)+u(\eta))}{ \dd(\eta^{-1}\circ \xi)^{Q+sp}}\dxieta\\
   		&	\quad -\int_{\h^n}f(\xi,u)(v(\xi)-u(\xi))\dxi \\
   		& = -\int_{\h^n}\int_{\h^n} \frac{L(u_m(\xi),u_m(\eta))((u_m(\xi)-k)_+\p^p(\xi)-(u_m(\eta)-k)_+\p^p(\eta))}{\dd(\eta^{-1}\circ \xi)^{Q+sp}}\dxieta\\
   		& \quad +\int_{\h^n}f(\xi,u)(u_m(\xi)-k)_+\p^p(\xi) \dxi,
   	\end{align*}
   	which yields
   	\begin{align*}
   		0 & \geq \int_{\h^n}\int_{\h^n} \frac{L(u_m(\xi),u_m(\eta))((u_m(\xi)-k)_+\p^p(\xi)-(u_m(\eta)-k)_+\p^p(\eta))}{\dd(\eta^{-1}\circ \xi)^{Q+sp}}\dxieta\\
   		& \quad -\|f\|_{L^\infty(B_r)}\int_{\h^n}(u_m(\xi)-k)_+\p^p(\xi) \dxi.
   	\end{align*}
   	As carefully explained in~\cite{MPPP21} the inequality above is enough to prove a Caccioppoli-type estimate with tail as Theorem~\ref{teo_caccioppoli}, which subsequently can be used to prove a local boundedness result like Theorem~\ref{teo_bdd}, which yields
   	\begin{equation}\label{sec4_thm1_e1}
   		\esssup_{B_{\varrho/2}(\eta)}(u_m)_+ \leq \tilde{\delta}\, {\rm Tail}((u_m)_m;\eta,\varrho/2)+\c\, \tilde{\delta}^{-\tilde{\gamma}} \left( \, \dashint_{B_\varrho(\eta)} (u_m)_+^p \dxi\right)^\frac{1}{p},
   	\end{equation}
   	where~$B_\varrho(\eta) \subset B_r$,~$\tilde{\delta} \in (0,1]$, the constant~$\tilde{\gamma}= \tilde{\gamma}(n,p,s)>0$ and~$\c$ depending on~$n$,$p$,$s$, $\Lambda$ and the function~$f$. 
   	
   	Now we apply a covering argument. Let us set
   	$$
   	\varrho= (\sigma-\sigma')r, \qquad \frac{1}{2} \leq \sigma' <\sigma \leq 1, \qquad \mbox{and} \qquad \eta \in  B_{\sigma' r}.
   	$$
   	We then proceed to estimate the nonlocal contribution in~\eqref{sec4_thm1_e1}. Let us first note that for any~$\xi \in \h^n \smallsetminus B_{\sigma r}$ we have that, by the choice of~$\eta$, it holds
   	\begin{equation}\label{sec4_thm1_e2}
   		\frac{|\xi^{-1}\circ \xi_0|_{\h^n}}{|\xi^{-1}\circ \eta|_{\h^n}} \leq 1+ \frac{|\eta^{-1}\circ \xi_0|_{\h^n}}{|\xi^{-1}\circ \xi_0|_{\h^n}-|\eta^{-1}\circ \xi_0|_{\h^n}} \leq 1+ \frac{\sigma'r}{\sigma r-\sigma'r}= \frac{\sigma r}{(\sigma-\sigma')r}  
   	\end{equation}
   	Then, by inequality~\eqref{sec4_thm1_e2} we have that the nonlocal tail on the righthand side of~\eqref{sec4_thm1_e1} can be treated as follows
   	\begin{align}\label{sec4_thm1_e3}
   		{\rm Tail}(&(u_m)_+;\eta,\varrho/2)^{p-1} \notag\\
   		& \leq \left(\frac{\varrho}{2}\right)^{sp}\sup_{B_{\sigma r}}(u_m)^{p-1}_+\int_{B_{\sigma r}\smallsetminus B_\varrho(\eta)}|\xi^{-1}\circ \eta|_{\h^n}^{-Q-sp}\dxi \notag\\
   		& \quad + \c \, (\sigma-\sigma')^{-Q} \, {\rm Tail}((u_m)_+;\xi_0,\sigma r)^{p-1}\notag\\
   		& \leq \c \, \sup_{B_{\sigma r}}(u_m)^{p-1}_+ +\c \, (\sigma-\sigma')^{-Q} \, {\rm Tail}((u_m)_+;\xi_0,r/2)^{p-1}.
   	\end{align}
   	Let us study the local contribution in~\eqref{sec4_thm1_e1}. Applying Young's Inequality, with exponent~$p/(p-t)$ and~$p/t$, we obtain that
   	\begin{align}\label{sec4_thm1_e4}
   		&\tilde{\delta}^{-\tilde{\gamma}} \left( \, \dashint_{B_\varrho(\eta)} (u_m)_+^p \dxi\right)^\frac{1}{p} \notag\\
   		& \quad \leq \tilde{\delta}^{-\tilde{\gamma}} \sup_{B_\varrho(\eta)}(u_m)_+^\frac{p-t}{p}\left( \, \dashint_{B_\varrho(\eta)} (u_m)_+^t \dxi\right)^\frac{1}{p}\notag\\
   		& \quad \leq \frac{1}{4} \sup_{B_{\sigma r}}(u_m)_+ + \c \ \tilde{\delta}^{-\frac{\tilde{\gamma}p}{t}} \left( \, \dashint_{B_\varrho(\eta)} (u_m)_+^t \dxi\right)^\frac{1}{t}\notag\\
   		& \quad \leq \frac{1}{4} \sup_{B_{\sigma r}}(u_m)_+ + \c \ \tilde{\delta}^{-\frac{\tilde{\gamma}p}{t}}  (\sigma-\sigma')^{-\frac{Q}{t}} \, \left( \, \dashint_{B_r} (u_m)_+^t \dxi\right)^\frac{1}{t}.
   	\end{align}
   	Thus, by combining~\eqref{sec4_thm1_e1} with~\eqref{sec4_thm1_e3} and~\eqref{sec4_thm1_e4} with~$\tilde{\delta} \leq 1/4\c$, we eventually arrive at
   	\begin{align}
   		\sup_{B_{\sigma'r}}(u_m)_+ & \leq \frac{1}{2}\sup_{B_{\sigma r}}(u_m)_+ + \c\,\tilde{\delta}^{-\frac{\tilde{\gamma}p}{t}}  (\sigma-\sigma')^{-\frac{Q}{t}} \, \left( \, \dashint_{B_r} (u_m)_+^t \dxi\right)^\frac{1}{t}\\
   		& \quad +\c \, \tilde{\delta}(\sigma-\sigma')^{-\frac{Q}{p-1}} \, {\rm Tail}((u_m)_+;\xi_0,r/2).\notag
   	\end{align}
   Extending the same argument of~\cite[Theorem~2]{KKP16} and taking in consideration the datum~$f$ and the non-Euclidean structure of our problem we finally obtain the desired inequality~\eqref{sec4_thm1_e0}. 
   \end{proof}
 
   \begin{theorem}\label{sec4_thm2}
   	Let~$s \in (0,1)$,~$p \in (1,\infty)$ and let us suppose that~$h$ is locally H\"older continuous in~$\Omega$ or~$h \equiv -\infty$. Then, under assumptions~\eqref{reg f},~\eqref{bound f} and~\eqref{monotonicity}, the solution~$u$ to the obstacle problem in~$\mathcal{K}_{g,h}(\Omega,\Omega')$ is locally H\"older continuous in~$\Omega$ as well. Moreover, for every~$\xi_0 \in \Omega$ there is~$r_0 >0$ such that, for any~$B_r \equiv B_r(\xi_0) \Subset B_{r_0} \equiv B_{r_0}(\xi_0)$, it holds, for any~$\varrho \in (0,r/3]$, 
   	\begin{align}\label{sec4_thm2_e0}
   		\osc_{B_\varrho} u & \leq \c \, \left(\frac{\varrho}{r}\right)^\alpha \left[ {\rm Tail}(u-h(\xi_0);\xi_0,r)+ \left( \, \dashint_{B_r} |u-h(\xi_0)|^p \dxi\right)^\frac{1}{p}\right]\\
   		& \quad +\c \int_\varrho^r \left(\frac{\varrho}{\tau}\right)^\alpha \left[ \omega_h\left(\frac{\tau}{\sigma}\right) +\varPsi\left(\frac{\tau}{\sigma}\right)\right] \frac{{\rm d }\tau}{\tau}\notag,
   	\end{align}
   where~$\omega_h(\varrho):= \osc_{B_\varrho}h$, the function~$\varPsi$ is given by
    \begin{equation}\label{chi}
  	\varPsi(\tau) = 
  	\begin{cases}
  		\tau^\frac{Qsp}{t(Q-sp)}\|f\|_{L^\infty(B_r)}^\frac{Q}{t(Q-sp)} \quad \text{for any} \ t<\min\big(p,\frac{Q(p-1)}{Q-sp}\big) \ \text{if} \ sp<Q,\\[0.8ex]
  		\tau^\frac{Q(s-\epsilon)}{t\epsilon}\|f\|_{L^\infty(B_r)}^\frac{\epsilon}{t\epsilon}  \quad \text{for any} \ 0< \epsilon<s \ \text{and}\ \text{any} \ t<\min\big(p,\frac{(p-1)s}{\epsilon}\big)  \ \text{if} \ sp=Q,
  	\end{cases}
  \end{equation}
  and~$\alpha$,~$\sigma$ and~$c$ are positive constants depending only on~$n$,~$p$,~$s$ and the structural constant~$\Lambda$ in~\eqref{def_lambda}.
   \end{theorem}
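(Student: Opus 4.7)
The plan is to adapt the iteration scheme of Korvenp\"a\"a--Kuusi--Palatucci~\cite[Theorem~3]{KKP16} to the present non-Euclidean and non-homogeneous framework, using as building blocks the local boundedness for obstacle solutions (Theorem~\ref{sec4_thm1}), the interior H\"older estimate for weak solutions to~\eqref{problema2} (Theorem~\ref{teo_holder}), the nonlocal weak Harnack inequality (Theorem~\ref{weak harnack}), and the free-boundary identification $\{u>h\} \cap \Omega$ in Corollary~\ref{sec3.1_corol1}.

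First, I would fix $\xi_0 \in \Omega$, choose $r_0>0$ so that $B_{r_0}(\xi_0) \Subset \Omega \cap \Omega'$, and work on a dyadic scale $r_j := \sigma^j r$ for a small parameter $\sigma \in (0,1/3)$ to be tuned later. Setting $m_j := \essinf_{B_{r_j}} u$ and $M_j := \esssup_{B_{r_j}} u$, the aim is to derive a one-step oscillation recursion of the shape
\begin{equation*}
\osc_{B_{r_{j+1}}} u \,\leq\, (1-\lambda)\, \osc_{B_{r_j}} u \,+\, c\,\omega_h(r_j) \,+\, c\,\varPsi(r_j) \,+\, c\,\sigma^{j\alpha}\,\mathrm{Tail}\text{-term},
\end{equation*}
with $\lambda \in (0,1)$ independent of $j$. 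An iteration of this inequality followed by a standard geometric summation (comparing $\sum_j \sigma^{j\alpha} \omega_h(\sigma^j r)$ with $\int_\varrho^r (\varrho/\tau)^\alpha \omega_h(\tau/\sigma) \, \mathrm{d}\tau / \tau$) then yields the quantitative estimate~\eqref{sec4_thm2_e0}.

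To produce the single-step decay I would dichotomize on how close $u$ is to the obstacle inside $B_{r_j}$. In the \emph{free regime}, when $\essinf_{B_{r_j/2}}(u-h) > 0$, Corollary~\ref{sec3.1_corol1} guarantees that $u$ is a weak solution of~\eqref{problema2} in $B_{r_j/2}$, so Theorem~\ref{teo_holder} (applied to the shifted function $u-h(\xi_0)$, whose tail is controlled by the $\mathrm{Tail}(u-h(\xi_0);\xi_0,r)$ term plus cumulated oscillations at larger scales via the computation in Remark~\ref{sec3.1_rem2}) gives the decay with the correct $\varPsi$-contribution arising from $\|f\|_{L^\infty}$. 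In the \emph{contact regime}, when $u$ gets close to $h$ somewhere in $B_{r_j/2}$, one has $m_j \leq \esssup_{B_{r_j}} h \leq h(\xi_0) + \omega_h(r_j)$, which together with Theorem~\ref{sec4_thm1} applied to $u - m$ with $m := \esssup_{B_{r_j}\cap\Omega} h \vee \esssup_{B_{r_j}\smallsetminus\Omega} g$ upgrades the upper bound on $u$, while the weak Harnack inequality applied to the nonnegative supersolution $M_{j-1} - u$ (legitimate by Corollary~\ref{sec3.1_corol1}) supplies the matching lower improvement, producing again a factor $(1-\lambda)$.

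The main obstacle will be the contact regime, because the presence of $f(\xi,u)$ produces the extra $\varPsi$-error in the weak Harnack inequality and, more subtly, because $f$ depends on $u$ itself so one cannot freeze it; here I would rely crucially on~\eqref{bound f} and~\eqref{monotonicity} to keep $\|f(\cdot,u)\|_{L^\infty(B_r)}$ uniformly controlled along the iteration, and on the shift trick (working with $u - h(\xi_0)$) to decouple the tails at successive scales from the essential oscillations. Finally, the geometric summation of the recursion $\omega(r_{j+1}) \leq (1-\lambda)\omega(r_j) + c\omega_h(r_j) + c\varPsi(r_j) + c\sigma^{j\alpha}\mathrm{Tail}$ yields the claimed mixed decay: the purely power-type factor $(\varrho/r)^\alpha$ from the contribution already present at scale $r$, and the Dini-type integral encoding the accumulation of the obstacle and right-hand side oscillations across all intermediate scales, which is exactly~\eqref{sec4_thm2_e0}.
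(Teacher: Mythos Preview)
Your overall strategy---dichotomize into free and contact regimes, then iterate a one-step oscillation decay---matches the paper's, but there is a genuine gap in the contact step. You propose to apply the weak Harnack inequality (Theorem~\ref{weak harnack}) to $M_{j-1}-u$, calling it ``the nonnegative supersolution $M_{j-1}-u$ (legitimate by Corollary~\ref{sec3.1_corol1})''. This is incorrect: Corollary~\ref{sec3.1_corol1} only tells you that the obstacle solution $u$ itself is a weak \emph{super}solution, hence $M_{j-1}-u$ is a weak \emph{sub}solution (to an equation with datum $-f(\cdot,u)$), and Theorem~\ref{weak harnack} does not apply to it. This is not a feature of the inhomogeneous setting; the same obstruction is present already in~\cite{KKP16}, where the argument also runs through a lower shift of $u$, never through $M-u$.

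The paper's remedy, which is the idea missing from your sketch, is to work only from below. One applies weak Harnack to $u_d:=u-d$ with $d:=h(\xi_0)-\omega_h(r)$: since $u\geq h\geq d$ in $B_r$, this is a nonnegative supersolution there (with shifted datum $f_d(\xi,\cdot)=f(\xi,\cdot+d)$, still satisfying~\eqref{bound f}). The paper also dichotomizes \emph{once at the point~$\xi_0$} rather than at each scale: either there is some $r_0$ with $\essinf_{B_{r_0}}(u-h)>0$, in which case Theorem~\ref{teo_holder} finishes the proof directly, or $\essinf_{B_r}(u-h)=0$ for \emph{every} $r$, which gives $\inf_{B_\varrho}u_d\leq\omega_h(r)$ at all scales. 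Weak Harnack then bounds $\big(\dashint_{B_{2\varrho}}u_d^t\big)^{1/t}$ by $c\,\omega_h(r)+c\,\textup{Tail}((u_d)_-)+c\,\varPsi(\varrho)$, and feeding this into the supremum estimate of Theorem~\ref{sec4_thm1} (with $m=d+2\omega_h(r)$) controls $\sup_{B_\varrho}u_d$, hence $\osc_{B_\varrho}u\leq\sup_{B_\varrho}u_d$, by the same quantities plus a $\delta\,\textup{Tail}$ term. This one-sided argument replaces your two-sided alternative and is what lets the iteration close; after choosing $\delta$ and $\sigma$ so that~\eqref{sec4_thm2_e1} holds, the geometric summation you describe goes through as stated.
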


   \begin{proof}
   	We extend the same approach used in~\cite{KKP16} taking in consideration the presence of the datum~$f$ and the underlying structure given by the Heisenberg group.
   	
   	With no loss of generality we assume that~$sp \leq Q$, since if the reverse inequality holds true we have that~$u$ is H\"older continuous, as proven in~\cite{AM18}.
	
   	We divide the proof into two cases: when~$\xi_0$ belongs to the contact set, i.~\!e. when it is such that for every~$r \in (0,R)$, with~$R :=\dist(\xi_0,\partial \Omega)$, we have for~$B_r \equiv B_r(\xi_0)$,
   	$$
   	\essinf_{B_r}(u-h)=0,
   	$$
   	and when~$\xi_0$ does not belongs to the contact set.
   	
   	Note that, in this second scenario, we can find~$r_0 \in (0,R)$ such that
   	$$
   	\essinf_{B_{r_0}}(u-h)>0.
   	$$ 
   	By Corollary~\ref{sec3.1_corol1} it holds that~$u$ is a weak solution to~\eqref{problema2} in $B_{r_0}$. Hence, an application of Theorem~\ref{teo_holder} yields 
   	$$
   	\osc_{B_\varrho} u \leq \c\left(\frac{\varrho}{r}\right)^\alpha \left({\rm Tail}(u;\xi_0,r/2)+ \left(\, \dashint_{B_r}|u|^p \dxi\right)^\frac{1}{p}\right),
   	$$
   	for any~$ r \in (0,r_0)$, $\varrho \in (0,r/2)$. Thus, the desired inequality~\eqref{sec4_thm2_e0} follows. 
   	
    Let us fix~$\xi_0$ on the contact set. We want to show that, for any~$r \in (0,R)$, there exist two positive constants~$\sigma \in (0,1)$ and~$\c$ such that, for~$B_r \equiv B_r(\xi_0)$,
   	\begin{align}\label{sec4_thm2_e1}
   		\osc_{B_{\sigma r}} u & +{\rm Tail}(u-h(\xi_0);\xi_0,\sigma r)\notag\\
   		& \leq \frac{1}{2}\left(\osc_{B_r}u + {\rm Tail}(u-h(\xi_0);\xi_0,r)\right)+\c \, \omega_h(r) +\c \, \varPsi(r).
   	\end{align}
   with~$\varPsi(\cdot)$ defined in~\eqref{chi}. Let us then begin observing that~$u \geq d:= h(\xi_0)-\omega_h(r)$ almost everywhere in~$B_r$. So let us set~$u_d := u-d$. 
   By Theorem~\ref{t1 obst existence} we have that~$u_d$ is a nonnegative weak supersolution in~$B_r$ with datum~$f_d(\xi,\cdot):= f(\xi,\cdot +d)$.
    We can proceed as in Theorem~\ref{sec4_thm1} to derive the boundedness estimate~\eqref{sec4_thm1_e0} for~$u_d$, which, with~$m = d+2\omega_h(r) \geq \sup_{B_{2\varrho}}h$, gives
    \begin{equation}\label{1}
    	\sup_{B_\varrho}u_d \leq 2\omega_h(r)+\delta \ {\rm Tail}((u_d)_+;\xi_0,\varrho)+ \c \  \delta^{-\gamma} \left( \, \dashint_{B_{2\varrho}} u_d^t \dxi\right)^\frac{1}{t},
   \end{equation}	
    for~$\varrho \in (0,r]$,~$t \in (0,p)$ and~$\delta \in (0,1] $.  Moreover, since~$u_d$ is a nonegative weak supersolution of~\eqref{problema2} with datum~$f_d$ and~$\|f_d\|_{L^\infty(B_r)}=\|f\|_{L^\infty(B_r)}$, Theorem~\ref{weak harnack} yields, exchanging the role of~$f_d$ and~$f$, 
    $$
    	\left( \ \dashint_{B_{2\varrho}}  u_d^t\,{\rm d}\xi\,\right)^\frac{1}{t} \ \leq \ c\,\inf_{B_{3\varrho}} u_d 
    	\, +c \left(\frac{\varrho}{r}\right)^\frac{sp}{p-1} {\rm Tail}((u_d)_-;\xi_0,r) \, + c \, \varPsi(\varrho),
    $$
    where~$\varPsi \equiv \varPsi(\varrho)$ is defined in~\eqref{chi} and given by Theorem~\ref{weak harnack},~$\varrho \in (0,r/3)$ and~$c$ depending on~$n$,~$p$,~$s$ and the structural constant~$\Lambda$. Recalling that~$\inf_{B_\varrho}u_d \leq \omega_h(r)$, due to~$\essinf_{B_r}(u-h)=0$, and that~$u_d \geq 0$ on~$B_r$, combining the previous estimates we arrive at
   	\begin{align*}
   		\osc_{B_\varrho}u & \leq \c \, \delta^{-\gamma} \omega_h(r) +\c \, \delta {\rm Tail}(u_d;\xi_0,\varrho) \\
   		& \quad +\c \, \delta^{-\gamma} \left(\frac{\varrho}{r}\right)^\frac{sp}{p-1} {\rm Tail}(u_d;\xi_0,r)+\c \, \delta^{-\gamma} \varPsi(\varrho),
   	\end{align*}
   for~$\varrho \in (0,r/3)$, $\delta \in (0,1)$,~$\c = \c(n,p,s,\Lambda)>0$.
   
   Observe now that
   	\begin{equation}\label{sec4_thm2_e3}
   		{\rm Tail}(u_d;\xi_0,\varrho) \leq \c \, \sup_{B_r}|u_d| + \c \, \left(\frac{\varrho}{r}\right)^\frac{sp}{p-1} {\rm Tail}(u_d;\xi_0,r),
   	\end{equation}
   	and we can also estimate
   	\begin{equation}\label{sec4_thm2_e4}
   		\sup_{B_r}|u_d| =  \sup_{B_r}|u-h(\xi_0)+ \omega_h(r)| \leq \osc_{B_r}u + 2\omega_h(r).
   	\end{equation}
   	Hence, putting together all the estimates above we eventually obtain 
   	\begin{align*}
   		\osc_{B_\varrho}u & \leq \c \, \delta \osc_{B_r} u+\c \, (\delta^{-\gamma}+2\delta) \omega_h(r)\\*
   		& \quad  + \c \, \delta^{-\gamma} \left(\frac{\varrho}{r}\right)^\frac{sp}{p-1} {\rm Tail}(u_d;\xi_0,r)\\*
   		& \quad +\c \, \delta^{-\gamma} \varPsi(\varrho).
   	\end{align*}
   	For any~$\e \in (0,1)$ we choose~$\delta$ and~$\tilde{\sigma} \in (0,1)$ such that
   	$$
   	\c \, \delta \leq \frac{\e}{2}, \qquad  \mbox{and} \qquad \c \, \delta^{-\gamma} \tilde{\sigma}^\frac{sp}{p-1} \leq \frac{\e}{2}.
   	$$
   	Thus, for~$\varrho := \tilde{\sigma}r$ it holds
   	\begin{equation}\label{sec4_thm2_e5}
   		\osc_{B_{\tilde{\sigma}r}}u  \leq \e \left( \osc_{B_r} u + {\rm Tail}(u-h(\xi_0);\xi_0,r) \right) +\c \,\omega_h(r) + \c \, \varPsi(r)
   	\end{equation}
  
   	Applying estimate~\eqref{sec4_thm2_e5} with~\eqref{sec4_thm2_e3}, we can estimate for any~$\sigma \in (0,\tilde{\sigma}$)
   	\begin{align*}
   		{\rm Tail}(u-h(\xi_0);\xi_0,\sigma r) & \leq \c \, \osc_{B_{\tilde{\sigma}r}} u + \c \, \left(\frac{\sigma}{\tilde{\sigma}}\right)^\frac{sp}{p-1}{\rm Tail}(u-h(\xi_0);\xi_0,\tilde{\sigma}r)+ \c \, \omega_h(r)\\
   		& \leq \c \,\e \left( \osc_{B_r} u + {\rm Tail}(u-h(\xi_0);\xi_0,r) \right) +\c \, \omega_h(r) + \c \, \varPsi(r)\\
   		& \quad +\c \,\left(\frac{\sigma}{\tilde{\sigma}}\right)^\frac{sp}{p-1}\left( \osc_{B_r}u + {\rm Tail}(u-h(\xi_0);\xi_0,r)\right).
   	\end{align*}
   	Adding now~\eqref{sec4_thm2_e5} and choosing~$\sigma$ and~$\e$ such that
   	$$
   	\c \left(\frac{\sigma}{\tilde{\sigma}}\right)^\frac{sp}{p-1} \leq \e \quad \mbox{and} \quad (\c+2)\e \leq \frac{1}{2},
   	$$
   	we obtain~\eqref{sec4_thm2_e1}.
   	
   	Iterating~\eqref{sec4_thm2_e1} we have that, for any~$k \in \mathbb{N}$, it holds
   	\begin{align}
   		\osc_{B_{\sigma^k r}} u & + {\rm Tail}(u-h(\xi_0);\xi_0,\sigma^k r)\notag\\
   		& \leq  2^{1-k} \left( \osc_{B_{\sigma r}} u + {\rm Tail}(u-h(\xi_0);\xi_0,\sigma r)\right)\\
   		& \quad + \c \sum_{j=0}^{k-2}2^{-j}\omega_h(\sigma^{k-j-1}r) +\c \, \sum_{j=0}^{k-2}2^{-j}\varPsi(\sigma^{k-j-1}r).\notag
   	\end{align}
   	After noticing that~$\osc_{B_r} u = \osc_{B_r}u_d \leq \sup_{B_r} u_d$ and using the supremum estimate~\eqref{1}, the inequality above yields 
   	\begin{align}\label{sec4_thm2_e6}
   		\osc_{B_{\sigma^k r}} u & + {\rm Tail}(u-h(\xi_0);\xi_0,\sigma^k r)\notag\\
   		& \leq  \c \,2^{1-k} \left( {\rm Tail}(u-h(\xi_0);\xi_0,r)+ \left(\,\dashint_{B_r}|u-h(\xi_0)|^t \dxi\right)^\frac{1}{t} \, \right)\\
   		& \quad + \c \sum_{j=0}^{k-1}2^{-j}\omega_h(\sigma^{k-j-1}r) +\c  \sum_{j=0}^{k-1}2^{-j}\varPsi(\sigma^{k-j-1}r).\notag
   	\end{align}
   	 Thus, estimate~\eqref{sec4_thm2_e0} follows in the case of the contact set for some exponent~$\alpha \leq -\log 2/\log \sigma$ and some easy manipulation.
    \end{proof}
   
   The main consequence of the theorem above is the following corollary whose proof follows directly from the one in~\cite[Theorem~4]{KKP16} using~\eqref{sec4_thm2_e0}.
   
   \begin{corol}\label{sec4_corol1}
   Let~$s \in (0,1)$ and~$p \in (1,\infty)$ and let us suppose that~$h$ is  continuous in~$\Omega$ or~$h \equiv -\infty$. Then, under assumptions~\eqref{reg f},~\eqref{bound f} and~\eqref{monotonicity}, the solution to the obstacle problem in~$\mathcal{K}_{g,h}(\Omega,\Omega')$ is continuous in~$\Omega$ as well.
   \end{corol}

   \subsubsection{Boundary regularity}
    We consider the boundary behaviour of the solution~$u$ to the obstacle problem in~$\mathcal{K}_{g,h}(\Omega,\Omega')$.
 
   We begin stating some helpful lemmas. The first one is a Caccioppoli-type inequality whose proof is a verbatim repetition of the one presented in~\cite[Theorem~1.3]{MPPP21}, whose proof strategy goes back to~\cite{DKP14,DKP16}.
   \begin{lemma}\label{sec5_lem1}
   	Let  $s \in (0,1)$, $p \in (1,\infty)$, and, under assumptions~\eqref{reg f},~\eqref{bound f} and~\eqref{monotonicity}, let $u \in \mathcal{K}_{g,h}(\Omega,\Omega')$ be the solution to the obstacle problem in~$\mathcal{K}_{g,h}(\Omega,\Omega')$. Let~$\xi_0 \in \partial \Omega$ and let~$r \in (0,\dist(\xi_0,\partial \Omega'))$, and suppose that, for~$B_r \equiv B_r(\xi_0)$,
   	$$
   	k_+ \geq \max\left(\esssup_{B_r}g, \esssup_{B_r\cap \Omega}h\right) \quad \mbox{and} \quad k_- \leq \essinf_{B_r} g.
   	$$
   	Then, for~$w_\pm:=(u-k_\pm)_\pm$, we have
   	\begin{align}
   		\int_{B_r} \int_{B_r} & |\eta^{-1} \circ \xi|_{\h^n}^{-Q-sp}  |w_\pm(\xi)\p(\xi)-w_\pm(\eta)\p(\eta)|^p \, \dxieta\notag\\*
   		& \leq \c\int_{B_r}\int_{B_r} | \eta^{-1} \circ \xi|_{\h^n}^{-Q-sp} w_\pm^p(\xi)|\p(\xi)-\p(\eta)|^p \, \dxieta\\*
   		&\quad+\c \int_{B_r}w_\pm(\xi)\p^p(\xi) \, \,{\rm d}\xi \left(\sup_{\eta \in \textup{supp}\, \p}\int_{\h^n \smallsetminus B_r} |\eta^{-1} \circ \xi|_{\h^n}^{-Q-sp} w_\pm^{p-1}(\xi) \,\dxi\right.\notag   \\*
   		& \hspace{4.5cm} \left.  + \, \|f\|_{L^\infty(B_r)}\right)\notag,
   	\end{align}
   	for any~$\p \in C^\infty_0(B_r)$ with~$0 \leq \p \leq 1$ and $c$ denepnding on $n$, $p$, $s$ and the structural constant $\Lambda$.
   \end{lemma}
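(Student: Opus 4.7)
The plan is to test the obstacle variational inequality $\langle \mathbf{A}(u), v - u\rangle \geq 0$ with two admissible perturbations, one for each truncation, and to reduce the resulting inequalities to the pointwise nonlocal algebra underpinning~\cite[Theorem~1.3]{MPPP21}.

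For $w_+$ I would use $v := u - w_+\varphi^p$. Admissibility is the first thing to check: outside $B_r$ the perturbation vanishes since $\varphi \in C^\infty_0(B_r)$; on $B_r \setminus \Omega$ one has $u = g \leq \esssup_{B_r} g \leq k_+$, so $w_+ \equiv 0$ there and $v = u = g$ as required; on $B_r \cap \Omega$, wherever $u > k_+$ one finds $v \geq u - w_+ = k_+ \geq \esssup_{B_r \cap \Omega} h \geq h$, while on $\{u \leq k_+\}$ one simply has $v = u \geq h$. Plugging this $v$ into the variational inequality gives, after rearrangement,
\begin{equation*}
\int_{\h^n}\int_{\h^n}\frac{L(u(\xi),u(\eta))\bigl(w_+(\xi)\varphi^p(\xi) - w_+(\eta)\varphi^p(\eta)\bigr)}{\dd(\eta^{-1}\circ\xi)^{Q+sp}}\dxieta \leq \int_\Omega f(\xi,u)\, w_+(\xi)\varphi^p(\xi)\dxi,
\end{equation*}
which is exactly the starting point of the proof in~\cite[Theorem~1.3]{MPPP21}: one splits the double integral into the local part on $B_r \times B_r$ and the long-range part on $(\h^n \setminus B_r) \times B_r$, applies the standard pointwise algebraic bound to produce a small multiple of $|w_+(\xi)\varphi(\xi) - w_+(\eta)\varphi(\eta)|^p$ minus a controlled $w_+^p\,|\varphi(\xi) - \varphi(\eta)|^p$ contribution on the diagonal, and controls the off-diagonal piece by the tail-like supremum appearing in the statement. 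The right-hand side is dominated by $\|f\|_{L^\infty(B_r)}\int_{B_r} w_+\varphi^p\dxi$ thanks to~\eqref{bound f}, producing the final summand in the conclusion.

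For the lower truncation I would symmetrically take $v := u + w_-\varphi^p$: since $k_- \leq \essinf_{B_r} g$, the perturbation vanishes on $B_r \setminus \Omega$ (so $v = g$ there), and on $\Omega$ one has $v \geq u \geq h$ trivially. The variational inequality then yields an inequality of opposite sign, which reduces to the previous case via the parity identity $L(-a,-b) = -L(a,b)$ together with $w_- = ((-u) - (-k_-))_+$; the $f$-term is again controlled by $\|f\|_{L^\infty(B_r)}\int_{B_r} w_-\varphi^p\dxi$ via~\eqref{bound f}. The only genuinely delicate step is the admissibility verification: the hypotheses on $k_\pm$ are tuned precisely so that the boundary datum $g$ is untouched outside $\Omega \cap B_r$ while the perturbed function still lies above $h$ inside $\Omega$. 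Once this is in place, the rest is a line-by-line adaptation of the Caccioppoli argument of~\cite{MPPP21,DKP14,DKP16} to the Heisenberg geometry through the comparability estimate in Proposition~\ref{prop1}.
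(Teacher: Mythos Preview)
Your proposal is correct and matches the paper's approach exactly: the paper states that the proof is ``a verbatim repetition of the one presented in~\cite[Theorem~1.3]{MPPP21}'', and you have correctly identified that the only new ingredient over the interior Caccioppoli estimate is the admissibility verification $v = u \mp w_\pm\varphi^p \in \mathcal{K}_{g,h}(\Omega,\Omega')$, which is precisely what the hypotheses on $k_\pm$ are designed to guarantee. The same test-function choice appears explicitly in the paper's proof of Theorem~\ref{sec4_thm1}, so your reconstruction is on the mark.
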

  
   We remark that if~$\max(\esssup_{B_r}g, \esssup_{B_r\cap \Omega}h)$ is infinite or~$\essinf_{B_r} g = - \infty$ then the interpretation is that no test function of the type~$w_\pm$ exists.
   
   \vspace{2mm}
   Let us note that since we have a Caccioppoli-type estimate on the boundary of the set~$\Omega$ we can prove a local boundedness estimate following the same iteration argument already used for the proof of Theorem~\ref{teo_bdd}.
   
   \begin{theorem}\label{sec5_thm1_boundedness}
   	Suppose that, under assumptions~\eqref{reg f},~\eqref{bound f} and~\eqref{monotonicity},~$u \in \mathcal{K}_{g,h}(\Omega,\Omega')$ solves the obstacle problem in~$\mathcal{K}_{g,h}(\Omega,\Omega')$ and let~$ s \in (0,1)$ and~$p \in (1,\infty)$. Let~$	\xi_0 \in \partial \Omega$ and~$r \in (0,\dist(\xi_0,\partial \Omega)$. Assume that, for~$B_r \equiv B_r(\xi_0)$, 
   	$$
   	\esssup_{B_r}g + \esssup_{B_r \cap \Omega} h <\infty \quad \mbox{and} \quad \essinf_{B_r} g > -\infty.
   	$$
  Then,~$u$ is essentially bounded close to~$\xi_0$.
   \end{theorem}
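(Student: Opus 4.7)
The proof will proceed by a De Giorgi-type iteration, adapting almost verbatim the argument used for the interior local boundedness (Theorem~\ref{teo_bdd}), with the only modification being that the Caccioppoli inequality employed is the boundary version provided by Lemma~\ref{sec5_lem1}. The key point that makes the iteration work in this setting is the finiteness assumption on $\esssup_{B_r}g$, $\esssup_{B_r\cap\Omega}h$ and $\essinf_{B_r}g$, which ensures the existence of admissible truncation levels $k_\pm$ for which the test functions $w_\pm=(u-k_\pm)_\pm$ are allowed in Lemma~\ref{sec5_lem1}.

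First, I would fix once and for all a level $k_+\geq\max(\esssup_{B_r}g,\esssup_{B_r\cap\Omega}h)$ to be determined in the course of the proof, together with a nested family of concentric balls $B_{r_j}\equiv B_{r_j}(\xi_0)$, $r_j:=(1/2+2^{-j-1})r$, and of cut-off functions $\varphi_j\in C_0^\infty(B_{(r_j+r_{j+1})/2})$ with $\varphi_j\equiv 1$ on $B_{r_{j+1}}$ and $|\nabla_{\h^n}\varphi_j|\leq c\,2^j/r$. Setting $w_j:=(u-k_j)_+$ with $k_j\nearrow k_+ + M$ for a threshold $M>0$ to be chosen, I would apply Lemma~\ref{sec5_lem1} on $B_{r_j}$ with $w_+\equiv w_j$ and $\varphi\equiv\varphi_j$. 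The Sobolev embedding of Theorem~\ref{sobolev}, applied to $w_j\varphi_j$ (which is compactly supported in $\h^n$), upgrades the left-hand side of the Caccioppoli estimate to an $L^{p^\ast}$-bound on $w_j\varphi_j$; then H\"older's Inequality produces a super-linear term $A_j^{1+\beta}$ on the right-hand side, where $A_j:=\int_{B_{r_j}}w_j^p\,\mathrm d\xi$ and $\beta:=sp/(Q-sp)$ (with the obvious modification when $sp\geq Q$).

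The main technical point in the iteration is the control of the nonlocal tail contribution on the right-hand side of Lemma~\ref{sec5_lem1}. Following the argument in Remark~\ref{sec3.1_rem2}, the supremum of $\int_{\h^n\setminus B_r}|\eta^{-1}\circ\xi|_{\h^n}^{-Q-sp}w_+^{p-1}\,\mathrm d\xi$ over $\eta\in\mathrm{supp}\,\varphi_j$ can be bounded, up to geometric constants depending on $r_j-r_{j+1}$, by $r^{-sp}\,\mathrm{Tail}((u-k_+)_+;\xi_0,r/2)^{p-1}$, which is finite because $u=g\in W^{s,p}(\h^n)\subset L^{p-1}_{sp}(\h^n)$ almost everywhere on $\h^n\setminus\Omega$ and $u\in W^{s,p}(\Omega')$. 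The $\|f\|_{L^\infty(B_r)}$ summand in Lemma~\ref{sec5_lem1} is a harmless additive constant depending only on the data. Plugging these estimates in gives a recursion of the form
\begin{equation*}
A_{j+1}\,\leq\,C\,b^{\,j}\,M^{-\beta(p-1)}\,A_j^{\,1+\beta}+C\,b^{\,j}\,M^{-p+1}\bigl[\mathrm{Tail}((u-k_+)_+;\xi_0,r/2)^{p-1}+\|f\|_{L^\infty(B_r)}\bigr]\,A_j,
\end{equation*}
for constants $C=C(n,p,s,\Lambda)$ and $b=b(n,p,s)>1$. A standard iteration lemma then forces $A_j\to 0$ provided $M$ is chosen large enough in terms of $A_0$, of $\mathrm{Tail}((u-k_+)_+;\xi_0,r/2)$ and of $\|f\|_{L^\infty(B_r)}$; this yields $\esssup_{B_{r/2}}u\leq k_++M<\infty$.

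The analogous argument with $w_-:=(u-k_-)_-$, using the lower choice $k_-\leq\essinf_{B_r}g$ (whose finiteness is guaranteed by hypothesis), provides the lower essential bound $\essinf_{B_{r/2}}u\geq k_--M'>-\infty$. Combining the two gives the claim. The main obstacle, as anticipated, is indeed the tail management: without the separate finiteness of $\esssup_{B_r}g$, $\esssup_{B_r\cap\Omega}h$ and $\essinf_{B_r}g$ one would not be allowed to truncate at admissible levels $k_\pm$, and Lemma~\ref{sec5_lem1} would be unavailable; this is exactly the role played by the hypotheses of the theorem.
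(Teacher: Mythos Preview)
Your proposal is correct and follows essentially the same approach as the paper: the paper does not give a detailed proof but simply remarks that, having the boundary Caccioppoli inequality of Lemma~\ref{sec5_lem1} in hand, one repeats the De Giorgi iteration of Theorem~\ref{teo_bdd}. Your outline fills in exactly this argument, correctly identifying that the finiteness hypotheses on $g$ and $h$ are needed to guarantee admissible truncation levels $k_\pm$ in Lemma~\ref{sec5_lem1}, and that the two-sided estimate is available precisely because that lemma provides the Caccioppoli bound for both $w_+$ and $w_-$.
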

   In order to prove the H\"older continuity of the solution to the obstacle problem on the boundary a logarithmic-type estimate is needed. 
   Then, following the same argument of~\cite[Lemma~1.4]{MPPP21}, we have 

   \begin{lemma}\label{sec5_lem3}
   	Suppose that, under assumptions~\eqref{reg f},~\eqref{bound f} and~\eqref{monotonicity}, ~$u \in \mathcal{K}_{g,h}(\Omega,\Omega')$ solves the obstacle problem in~$\mathcal{K}_{g,h}(\Omega,\Omega')$ with~$s \in (0,1)$ and~$p \in ( 1,\infty)$. Let $B_R \equiv B_R(\xi_0) \Subset \Omega'$, $B_r \subset B_{R/2}$ and assume that
   	$$
   	+\infty > k_+ \geq \max \left( \esssup_{B_R}g, \esssup_{B_R\cap \Omega}h\right) \quad \mbox{and} \quad -\infty < k_- \leq \essinf_{B_R}g.
   	$$
   	Then the functions
   	$$
   	w_\pm := \esssup_{B_R}(u-k_\pm)_\pm -(u-k_\pm)_\pm +\e
   	$$
   	satisfy the following estimate
   	\begin{align}
   		\int_{B_r}&\int_{B_r}  \left|\log \frac{w_\pm(\xi)}{w_\pm(\eta)}\right|^p \dd(\eta^{-1} \circ \xi)^{-Q-sp} \dxieta \notag\\
   		& \leq \c \, r^{Q-sp} \left(1+ r^{sp}\e^{1-p}\|f\|_{L^\infty(B_R)}+\e^{1-p} \left(\frac{r}{R}\right)^{sp} {\rm Tail}((w_\pm)_-;\xi_0,R)^{p-1}\right),
   	\end{align}
   	for every~$\e >0$ and $c$ denepnding on $n$, $p$, $s$ and the structural constant $\Lambda$.
   \end{lemma}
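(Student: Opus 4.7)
My plan is to adapt the proof of the standard logarithmic lemma for weak supersolutions (cf.\ \cite[Lemma~1.4]{MPPP21}) to the obstacle-problem setting. The key idea is to use the variational inequality $\langle \mathbf{A}(u), v - u\rangle \geq 0$ in place of the weak equation, exploiting the compatibility conditions on $k_\pm$ to ensure admissibility of the natural log-type competitor. First, I would pick a cutoff $\varphi \in C_0^\infty(B_{3r/2})$ with $\varphi \equiv 1$ on $B_r$, $0 \leq \varphi \leq 1$, $|\nabla_{\h^n}\varphi| \leq c/r$, and consider the competitors $v_+ := u + \tau \varphi^{p} w_+^{1-p}$ and $v_- := u - \tau \varphi^{p} w_-^{1-p}$, with $\tau > 0$ small. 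For $v_+$, admissibility reduces to checking that $v_+ = g$ on $\h^n \setminus \Omega$: the hypothesis $k_+ \geq \esssup_{B_R} g$ forces $u = g \leq k_+$ on $B_R \setminus \Omega$, hence $(u-k_+)_+ \equiv 0$ and $w_+$ is the positive constant $\esssup_{B_R}(u-k_+)_+ + \e$ there. A routine modification of $\varphi$ to vanish on $\h^n \setminus \Omega$ then makes $v_+$ admissible, while altering the estimate only by a harmless term that can be absorbed into the nonlocal tail contribution. The $-$ case is handled symmetrically using $k_- \leq \essinf_{B_R}g$ and $u \geq h$ in $\Omega$ to guarantee that $v_-$ remains above the obstacle after suitable truncation.

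Next, I would plug these competitors into the obstacle-problem inequality and apply the pointwise algebraic inequality of Di Castro--Kuusi--Palatucci type that bounds $L(a,b)\bigl(\rho(a)^{1-p}-\rho(b)^{1-p}\bigr)$ from above by $-c^{-1}|\log(\rho(a)/\rho(b))|^{p}$ plus an absorbable correction, with $\rho$ the truncation defining $w_\pm$. This converts the nonlocal bilinear form into the desired logarithmic integrand on $B_r \times B_r$ together with three remainders: a local correction controlled via $|\nabla_{\h^n}\varphi| \leq c/r$ and Lemma~\ref{lem1}, producing the leading factor $c\, r^{Q-sp}$; a nonlocal contribution from $\h^n \setminus B_R$ handled via the norm comparison~\eqref{sec3.1_e3_norme} and Lemma~\ref{lem1}, producing $c\,\e^{1-p}(r/R)^{sp}{\rm Tail}((w_\pm)_-;\xi_0,R)^{p-1} r^{Q-sp}$; and the source term $\int f(\xi,u)\varphi^{p}w_\pm^{1-p}\,{\rm d}\xi$, estimated via~\eqref{bound f} and the lower bound $w_\pm \geq \e$ to give $c\,r^{sp}\e^{1-p}\|f\|_{L^\infty(B_R)} r^{Q-sp}$.

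\emph{The main obstacle} is the first step. Since $\xi_0$ is allowed to lie on $\partial \Omega$, a cutoff supported in $B_{3r/2}$ does not a priori yield an admissible test for $\mathcal{K}_{g,h}(\Omega,\Omega')$: the competitor must still equal $g$ on $\h^n \setminus \Omega$. The conditions $k_+ \geq \esssup_{B_R}g$ and $k_- \leq \essinf_{B_R}g$ are precisely what makes $w_\pm^{1-p}$ constant on $B_R \setminus \Omega$, so that the log-type weight and its truncation to $\Omega$ differ only by a multiple of $\varphi^{p}\mathbf{1}_{B_R \setminus \Omega}$ which produces an easily estimated remainder. Once this admissibility issue is settled, the remaining computations are a direct transcription of the interior argument in~\cite[Lemma~1.4]{MPPP21}, with the additional source term tracked through~\eqref{bound f}.
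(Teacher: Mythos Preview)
Your overall strategy---adapt the interior logarithmic lemma of \cite[Lemma~1.4]{MPPP21} to the obstacle setting and handle admissibility through the hypotheses on $k_\pm$---is precisely the paper's approach (the paper gives no self-contained argument and just refers to \cite{MPPP21}). The admissibility concern you flag is real, and your observation that $w_\pm$ is constant on $B_R\smallsetminus\Omega$ is the correct key. The cleanest fix, however, is not to truncate $\varphi$ (which destroys regularity) but to subtract the constant value $(M_\pm+\varepsilon)^{1-p}$ from $w_\pm^{1-p}$ in the test function; the resulting $\psi_\pm:=\big(w_\pm^{1-p}-(M_\pm+\varepsilon)^{1-p}\big)\varphi^p$ is nonnegative, vanishes on $\h^n\smallsetminus\Omega$, and the extra constant produces only a harmless Caccioppoli-type remainder.

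There is, however, a genuine sign error in your competitors. Since $w_+$ is a \emph{non-increasing} truncation of $u$ while $w_-$ is \emph{non-decreasing}, one has $L(u(\xi),u(\eta))=-L(w_+(\xi),w_+(\eta))$ on $\{u>k_+\}^2$ but $L(u(\xi),u(\eta))=+L(w_-(\xi),w_-(\eta))$ on $\{u<k_-\}^2$. The Di~Castro--Kuusi--Palatucci algebraic inequality gives an \emph{upper} bound on $L(w,w)\big(w^{1-p}\varphi^p-w^{1-p}\varphi^p\big)$, so to extract the logarithmic term you must pair it with $\langle\mathbf{A}(u),\psi\rangle\ge 0$ for $w_-$ (competitor $v_-=u+\tau\psi_-$) and with $\langle\mathbf{A}(u),-\psi\rangle\ge 0$ for $w_+$ (competitor $v_+=u-\tau\psi_+$). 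Your choices $v_+=u+\tau\psi$ and $v_-=u-\tau\psi$ are exactly reversed: with those signs the variational inequality and the algebraic bound point in the same direction and the $|\log|^p$ term cannot be isolated. The swap also realigns the admissibility checks: $v_-=u+\tau\psi_-\ge u\ge h$ is automatic, while for $v_+=u-\tau\psi_+$ one uses $k_+\ge\esssup_{B_R\cap\Omega}h$ together with $\psi_+=0$ on $\{u\le k_+\}$ and a sufficiently small $\tau$ (of order $\varepsilon^{p}$, which cancels after dividing through) to keep $v_+\ge k_+\ge h$ on $\{u>k_+\}$.
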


    We can now prove the following result.
    \begin{lemma}\label{sec5.1_lem2}
   	Assume that~$\xi_0 =0\in \partial \Omega$ and~$g(0)=0$, where~$\Omega$ satisfies~\eqref{condizione misura} for all~$r \leq R$. Suppose that, under assumptions~\eqref{reg f},~\eqref{bound f} and~\eqref{monotonicity},~$u \in \mathcal{K}_{g,h}(\Omega,\Omega')$ solves the obstacle problem in~$\mathcal{K}_{g,h}(\Omega,\Omega')$ with~$s \in (0,1)$ and~$p \in (1,\infty)$. Let~$\omega>0$. Then, there exist~$\tau_0 \in (0,1)$, $\sigma \in (0,1)$ and $\vartheta \in (0,1)$, all depending on~$n$,~$p$,~$s$ and~$\delta_\Omega$, such that if
   	\begin{equation}\label{sec5.1_lem2_e0}
   		\osc_{B_R} u + \sigma \, {\rm Tail}(u;0,R) \leq \omega, \quad \osc_{B_R}g \leq \frac{\omega}{8}, \quad \mbox{and} \quad \|f\|_{L^\infty (B_R)} \leq (\vartheta \omega)^{p-1},
   	\end{equation}
   	hold for~$B_R \equiv B_R(0)$, then the following decay estimate 
   	\begin{equation}\label{sec5.1_lem2_e1}
   		\osc_{B_{\tau R}} u + \sigma \, {\rm Tail}(u;0,\tau R) \leq (1-\vartheta)\omega ,
   	\end{equation}
   	holds as well for every~$\tau \in (0,\tau_0]$.
   \end{lemma}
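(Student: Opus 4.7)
The strategy is to adapt the boundary oscillation reduction argument of~\cite[Lemma~5.1]{KKP16} to the Heisenberg setting and to the presence of the datum~$f$; the three assumptions in~\eqref{sec5.1_lem2_e0} are precisely calibrated so that, at each iteration step, the source and the long-range contributions behave as absorbable perturbations of the measure-density information coming from~$\partial \Omega$. Write $M := \esssup_{B_R}u$ and $m := \essinf_{B_R}u$, so that $M - m \leq \omega$. Since $u = g$ on $B_R \smallsetminus \Omega$, $g(0) = 0$ and $\osc_{B_R} g \leq \omega/8$, one has $|u| \leq \omega/8$ a.e.\ on $B_R \smallsetminus \Omega$, which by~\eqref{condizione misura} has measure at least $\delta_\Omega |B_R|$. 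In particular $m \leq \omega/8$, $M \geq -\omega/8$, and therefore $|u| \leq 9\omega/8$ on the whole of~$B_R$. Three alternatives are possible: (C) $M < \omega/4$ and $m > -\omega/4$, which already forces $\osc_{B_R} u < \omega/2$; or (A) $M \geq \omega/4$; or the symmetric case (B) $m \leq -\omega/4$.

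Assume we are in case~(A); case~(B) follows by replacing $u$ with $-u$, and case~(C) only requires the tail analysis below. In case~(A) the trace information becomes the positive-density estimate
$$
|\{ u \leq M - \omega/8\} \cap B_R| \geq \delta_\Omega |B_R|.
$$
Since $\esssup_{B_R} g \leq \omega/8$ and $h \leq g$ a.e.\ in $B_R \cap \Omega$ (because $g \in \mathcal{K}_{g,h}(\Omega,\Omega')$), the boundary Caccioppoli estimate of Lemma~\ref{sec5_lem1} applies to the truncations $(u - k)_+$ for every $k \in [\omega/8, M]$. I would combine this with the logarithmic estimate of Lemma~\ref{sec5_lem3}, which via Proposition~\ref{poincare} turns the above density into $L^p$-smallness of $\log((M - u + \vartheta\omega)/\vartheta\omega)$ on $B_{R/2}$, and with the fractional Sobolev embedding of Theorem~\ref{sobolev}. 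A standard dyadic iteration along the levels $k_j := M - 2^{-j}\omega$ then yields
$$
\esssup_{B_{\tau_0 R}} u \leq M - 2\vartheta\omega
$$
for some $\tau_0, \vartheta \in (0,1)$ depending only on $n, p, s, \delta_\Omega$. The datum $f$ enters each step only through $\|f\|_{L^\infty(B_R)}^{1/(p-1)} \leq \vartheta\omega$, and the tail at scale $R$ only through $\sigma\,\textup{Tail}(u;0,R) \leq \omega$; both are absorbable once $\vartheta$ and $\sigma$ are sufficiently small. Combined with $m \geq M - \omega$, this gives $\osc_{B_{\tau R}} u \leq (1 - 2\vartheta)\omega$ for every $\tau \in (0,\tau_0]$.

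For the tail term I would split at the scale~$R$, use $|u| \leq 9\omega/8$ on $B_R$ with Lemma~\ref{lem1}, and invoke $\sigma\,\textup{Tail}(u;0,R) \leq \omega$, to obtain
$$
\textup{Tail}(u;0,\tau R)^{p-1} \leq C\omega^{p-1} + \tau^{sp}\,\textup{Tail}(u;0,R)^{p-1} \leq C\omega^{p-1} + \tau^{sp}\sigma^{1-p}\omega^{p-1}.
$$
Picking $\sigma$ so small that $C^{1/(p-1)}\sigma \leq \vartheta/2$, and then $\tau_0$ so small that $\tau_0^{sp/(p-1)} \leq \sigma\vartheta/2$, produces $\sigma\,\textup{Tail}(u;0,\tau R) \leq \vartheta\omega$ for every $\tau \in (0,\tau_0]$. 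Adding this to the oscillation bound from the previous step yields precisely~\eqref{sec5.1_lem2_e1}; case~(C) is handled by the same tail calibration with the bound $\osc_{B_R}u < \omega/2$ replacing the De Giorgi step.

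\emph{Main obstacle.} The technical heart is the iteration in case~(A): the constants $\tau_0, \vartheta, \sigma$ must end up depending only on $n, p, s, \delta_\Omega$ (not on $\omega$), which forces one to track the source and the tail as scaling-invariant perturbations of the level-set measure information at each step; moreover, the fact that the dilations $\{\Phi_\lambda\}$ of $\h^n$ are non-isotropic prevents the usual reduction to $R = 1$ by rescaling, so that the homogeneous dimension $Q$ and the homogeneous norm $|\cdot|_{\h^n}$ have to be kept explicit throughout. The obstacle $h$ enters only through the hypothesis $g \in \mathcal{K}_{g,h}(\Omega,\Omega')$, which ensures that the admissible truncation level $k_+ = \omega/8$ in Lemma~\ref{sec5_lem1} dominates both $g$ and $h$ on $B_R$.
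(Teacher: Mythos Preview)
Your overall architecture matches the paper's: separate the tail from the oscillation, handle the tail by splitting at scale $R$ and choosing $\sigma,\tau_0$ small, and in the nontrivial case (your~(A), the paper's $\sup_B u\geq 3\omega/8$) use the logarithmic estimate together with the boundary density to pull the supremum down by~$2\vartheta\omega$. Your treatment of the tail and of the case split is essentially identical to the paper's, up to cosmetic thresholds.

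The point where your sketch is imprecise, and where the paper's route is cleaner, is the step converting density information into the pointwise bound. You cite Proposition~\ref{poincare} and then a ``standard dyadic iteration along the levels $k_j=M-2^{-j}\omega$''. The paper instead uses the \emph{density-weighted} Poincar\'e inequality of Lemma~\ref{sec5.1_lem1} (not Proposition~\ref{poincare}), applied to $\log\tilde w_+$ with $\tilde w_+:=w_+/\sup_B w_+$; this function equals~$1$ on $B\smallsetminus\Omega$, so $\log\tilde w_+$ vanishes there and Lemma~\ref{sec5.1_lem1} combined with Lemma~\ref{sec5_lem3} gives a uniform bound on $\dashint_{2\tau B}|\log\tilde w_+|^p$. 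A single application of Chebyshev then yields smallness of $|\{u\geq \sup_B u-4\vartheta\omega\}\cap 2\tau B|$, and a \emph{single} application of the boundary supremum estimate (Theorem~\ref{sec5_thm1_boundedness}) at the level $\tilde k=\sup_B u-4\vartheta\omega$ finishes. There is no explicit dyadic iteration on levels~$k_j$; the only De~Giorgi iteration is the one already hidden inside the boundedness theorem. Your description conflates this with a bare level-set iteration, and the plain Poincar\'e of Proposition~\ref{poincare} does not see the vanishing on $B\smallsetminus\Omega$, which is precisely how~\eqref{condizione misura} enters. If you replace Proposition~\ref{poincare} by Lemma~\ref{sec5.1_lem1} and replace the ``dyadic iteration'' by Chebyshev followed by one call to Theorem~\ref{sec5_thm1_boundedness}, your proof becomes the paper's.
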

   
   We will use the following lemma, whose proof can be done extending the one presented in~\cite[Lemma~7]{KKP16} and using the Poincaré inequality in Proposition~\ref{poincare}.
   \begin{lemma}\label{sec5.1_lem1}
   	Let us consider a set~$\Omega$ which satisfies condition~\eqref{condizione misura} for~$r_0>0$ and~$\delta_\Omega >0$ and let $B_r \equiv B_r(\xi_0)$ with~$\xi_0 \in \partial \Omega$ and~$r \in (0,r_0)$. Let us suppose that~$\psi \in W^{s,p}(B_r)$ and~$\psi \equiv 0$ in~$B_r \smallsetminus \Omega$. Then
   	\begin{equation}\label{sec5.1_lem1_e0}
   		\dashint_{B_r}|\psi|^p \dxi \leq \c \left(1-(1-\delta_\Omega)^{1-1/p}\right)^{-p}r^{sp} \int_{B_r} \ \dashint_{B_r} \frac{|\psi(\xi)-\psi(\eta)|^p}{|\eta^{-1}\circ \xi|_{\h^n}^{Q+sp}}\dxieta.
   	\end{equation}
   \end{lemma}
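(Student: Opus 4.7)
The plan is to follow the classical strategy of passing from the zero-average Poincar\'e inequality (Proposition~\ref{poincare}) to the zero-trace version via the measure-theoretic density assumption~\eqref{condizione misura}. The crucial observation is that since $\psi$ vanishes on $B_r\smallsetminus\Omega$, a set with relative measure at least $\delta_\Omega$, the mean value $(\psi)_{B_r}$ is quantitatively controlled by $\|\psi\|_{L^p(B_r)}$ with a constant strictly smaller than~$1$; once this is in hand, the term $|(\psi)_{B_r}|$ that appears after a triangle inequality can be absorbed into the left-hand side.

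Concretely, the first step is to exploit that $\psi\equiv 0$ on $B_r\smallsetminus\Omega$ to write $(\psi)_{B_r}=|B_r|^{-1}\int_{B_r\cap\Omega}\psi\,{\rm d}\xi$, and then bound this by H\"older's inequality:
\[
|(\psi)_{B_r}|\leq \left(\frac{|B_r\cap\Omega|}{|B_r|}\right)^{1-1/p}\left(\dashint_{B_r}|\psi|^p\,{\rm d}\xi\right)^{1/p}\leq (1-\delta_\Omega)^{1-1/p}\left(\dashint_{B_r}|\psi|^p\,{\rm d}\xi\right)^{1/p},
\]
using~\eqref{condizione misura} in the last step. Next, from the triangle inequality $\bigl(\dashint_{B_r}|\psi|^p\bigr)^{1/p}\leq \bigl(\dashint_{B_r}|\psi-(\psi)_{B_r}|^p\bigr)^{1/p}+|(\psi)_{B_r}|$ combined with the previous bound, the prefactor $1-(1-\delta_\Omega)^{1-1/p}$ can be transferred to the left, yielding
\[
\bigl(1-(1-\delta_\Omega)^{1-1/p}\bigr)^p\,\dashint_{B_r}|\psi|^p\,{\rm d}\xi \leq \dashint_{B_r}|\psi-(\psi)_{B_r}|^p\,{\rm d}\xi.
\]

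The proof is then completed by invoking the fractional Poincar\'e inequality in Proposition~\ref{poincare} to control the right-hand side by $c\,r^{sp}$ times the Gagliardo-type double integral, which gives~\eqref{sec5.1_lem1_e0} once one divides through by $|B_r|$ to recover the inner average in $\eta$. The only real obstacle is the absorption step, which demands $(1-\delta_\Omega)^{1-1/p}<1$; this is automatic from $\delta_\Omega\in(0,1)$ and $p>1$, and it is precisely this factor that produces the singular constant $\bigl(1-(1-\delta_\Omega)^{1-1/p}\bigr)^{-p}$ in the final estimate. All sub-Riemannian specifics are encapsulated in Proposition~\ref{poincare}, so no further adaptation to the Heisenberg setting is needed beyond what has already been established in Section~\ref{sec_preliminaries}.
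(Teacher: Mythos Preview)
Your proposal is correct and follows precisely the approach the paper indicates, namely the extension of \cite[Lemma~7]{KKP16} via the fractional Poincar\'e inequality of Proposition~\ref{poincare}: bound $|(\psi)_{B_r}|$ by H\"older using $\psi\equiv 0$ on $B_r\smallsetminus\Omega$ together with~\eqref{condizione misura}, absorb it via the triangle inequality, and then apply Proposition~\ref{poincare}. Nothing further is needed.
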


   \begin{proof}[Proof of Lemma~{\rm \ref{sec5.1_lem2}}]
   	Let us begin denoting with~$H = \vartheta /\sigma$ and~$B \equiv B_R(0)$. We can estimate the tail term as follows
   	\begin{align*}
   		\sigma^{p-1}{\rm Tail}(u;0,\tau R)^{p-1} & = \sigma^{p-1} (\tau R)^{sp} \int_{B \smallsetminus \tau B}\frac{|u(\xi)|^{p-1}}{|\xi|_{\h^n}^{Q+sp}} \dxi\\
   		& \quad + \sigma^{p-1}\tau^{sp}{\rm Tail}(u;0,R)^{p-1}\\
   		& \leq \c \,\sigma^{p-1}\omega^{p-1} + \tau^{sp}\omega^{p-1}.
   	\end{align*}
   	Hence, 
   	\begin{equation}
   		\sigma{\rm Tail}(u;0,\tau R) \leq \c \left(\frac{\vartheta}{H} + \tau^\frac{sp}{p-1}\right)\omega \leq \frac{2\c\vartheta}{H}\omega \leq \vartheta \omega
   	\end{equation} 
   	where we have chosen~$\tau_0 \leq \sigma^{(p-1)/(sp)}$ and~$H = 2\c $. Thus, we only have to show
   	\begin{equation}\label{sec5.1_lem2_e3}
   		\osc_{\tau B} u \leq (1-2\vartheta)\omega, \qquad \forall \tau \leq \tau_0.
   	\end{equation}
   	Let us adopt the following notation
   	$$
   	k_+ := \sup_B u -\frac{\omega}{4}, \quad, k_- := \inf_B u + \frac{\omega}{4}, \quad \e:= \vartheta \omega,
   	$$
   	and
   	$$
   	w_\pm := \sup_{B}(u-k_\pm)_\pm -(u-k_\pm)_\pm +\e, \quad \tilde{w}_\pm := \frac{w_\pm}{\sup_{B}w_\pm}.
   	$$
   	Clearly we can assume that~$\sup_B u \geq \frac{3}{8}\omega$ and~$\inf_B u \leq -\frac{3}{8}\omega$ since if the reverse inequalities hold true we have~$\osc_B u \leq \frac{3}{4}\omega$ and~\eqref{sec5.1_lem2_e3} follows choosing~$\vartheta \leq 1/8$.
   	
   	We consider the case~$\sup_B u \geq \frac{3}{8}\omega$ being the other one symmetric. Let us note that due to the condition~$u = g \leq \omega/8$ in~$B \smallsetminus \Omega$ then $\tilde{w}_+ = 1$ in $B \smallsetminus \Omega$. Hence, we apply Lemma~\ref{sec5.1_lem1}, Proposition~\ref{prop1} and Lemma~\ref{sec5_lem3} together with~\eqref{sec5.1_lem2_e0}, with $r = 2\tau R$ and $\tau_0 \leq \min(1/4 , \sigma^{2(p-1)/sp})$, obtaining
   	\begin{align*}
   		\dashint_{2\tau B} |\log \tilde{w}_+|^p \dxi & \leq \c \, (\tau R)^{sp}\int_{2\tau B} \ \dashint_{2\tau B}\left| \, \log \frac{\tilde{w}_+(\xi)}{\tilde{w}_+(\eta)}\right|^p \dd(\eta^{-1}\circ\xi)^{-Q-sp} \dxieta\\
   		& \leq \c \, \left(1+ (2\tau R)^{sp}\e^{1-p}\|f\|_{L^\infty(2\tau B)}+(\vartheta \omega)^{1-p} \left(2\tau\right)^{sp} {\rm Tail}((\tilde{w}_+)_-;0,R)^{p-1}\right)\\*
      & \leq \c	
   	\end{align*}
   	Combining Chebyshev's Inequality  with the previous estimate we get
   	\begin{align}\label{sec5.1_lem2_e4}
   		\frac{|2\tau B \cap \{|\log \tilde{w}_+| \geq |\log(20\vartheta)|\}|}{|2\tau B|}  &\leq  |\log (20 \vartheta)|^{-p} \dashint_{2\tau B}|\log \tilde{w}_+|^p \dxi\notag\\
   		&   		\leq \c \, |\log(20\vartheta)|^{-p}.
   	\end{align}
   	We want to estimate now the left-hand side of~\eqref{sec5.1_lem2_e4}. Recall that, by definition~$0 < \tilde{w}_+ \leq 1$ and~$\sup_B (u-k_+)_+ = \omega/4$, we obtain that, when~$\vartheta < 1/20$,
   	\begin{align*}
   		\{|\log \tilde{w}_+| \geq |\log(20\vartheta)|\} & = \{\tilde{w}_+ \leq 20\vartheta\}\\
   		& = \left\{ \frac{\omega}{4}-(u-k_+)_+ +\e \leq 20 \vartheta \left(\frac{\omega}{4}+\e\right)\right\} 
   		 \supset \big\{u \geq \sup_{B}u -4\vartheta\omega\big\}.
   	\end{align*}
   	Hence, denoting with~$\tilde{k}:= \sup_{B}u -4\vartheta\omega$ we eventually arrive at
   	\begin{align*}
   		\left(\,\dashint_{2\tau B}(u-\tilde{k})_+^p\dxi\,\right)^\frac{1}{p} & \leq 4\vartheta\omega \left(\frac{|2\tau B \cap\{u \geq \sup_{B}u-4\vartheta\omega\}|}{|2\tau B|}\right)^\frac{1}{p}
   		 \leq \frac{\c \vartheta\omega}{|\log(20\vartheta)|}.
   	\end{align*}
   	Since~$\tilde{k} \geq \sup_{B}g$ by Theorem~\ref{sec5_thm1_boundedness} we obtain that
   	$$
   	\sup_{\tau B}(u-\tilde{k})_+ \leq \delta \, {\rm Tail}((u-\tilde{k})_+;0,\tau R)+\c \, \delta^{-\gamma}\left( \  \dashint_{2\tau B}(u-\tilde{k})_+^p \dxi\right)^\frac{1}{p},
   	$$
   	for any~$\delta \in (0,1]$. Thus,
   	\begin{equation}\label{sec5.1_lem2_e5}
   		\sup_{\tau B} u \leq \sup_B u -4\vartheta\omega +\delta {\rm Tail}((u-\tilde{k})_+;0,\tau R)  + \frac{\c \delta^{-\gamma} \vartheta\omega}{|\log(20\vartheta)|}.
   	\end{equation}
   	We are left to estimating the tail term
   	\begin{align*}
   		{\rm Tail}((u-\tilde{k})_+;0,\tau R)^{p-1} & \leq (\tau R)^{sp} \int_{B \smallsetminus \tau B} \frac{(u-\tilde{k})_+^{p-1}}{|\xi|_{\h^n}^{Q+sp} }\dxi\\
   		& \qquad+ \tau^{sp}{\rm Tail}((u-\tilde{k})_+;0,R)^{p-1}\\
   		& \leq \c \, (\vartheta \omega)^{p-1} \left(1+\frac{\tau^{sp}}{\vartheta^{p-1}\sigma^{p-1}}\right) \leq c(\vartheta\omega)^{p-1},
   	\end{align*}
   	where we have taken~$\tau^{sp} \leq \tau_0^{sp} \leq (\sigma\vartheta)^{p-1}$ and we have used condition~\eqref{sec5.1_lem2_e0}. Then, up to taking~$\delta$ and~$\vartheta$ sufficiently small, from~\eqref{sec5.1_lem2_e5}, we have that
   	$$
   	\sup_{\tau B} \leq \sup_{B}u -2\vartheta\omega,
   	$$
   	and condition~\eqref{sec5.1_lem2_e3} follows as desired.
   \end{proof}
   We are now able to prove our main result.
   \begin{theorem}\label{sec5.1_thm1}
   	Under assumptions~\eqref{reg f},~\eqref{bound f} and~\eqref{monotonicity} let~$u \in \mathcal{K}_{g,h}(\Omega,\Omega')$ be the solution to the obstacle problem in~$\mathcal{K}_{g,h}(\Omega,\Omega')$ with~$s \in (0,1)$ and~$p \in (1,\infty)$ and assume~$\xi_0 \in \partial \Omega$ and~$B_{2R}(\xi_0) \subset \Omega'$.  If~$g \in \mathcal{K}_{g,h}(\Omega,\Omega')$ is H\"older continuous in~$B_R(\xi_0)$ and~$\Omega$ satisfies~\eqref{condizione misura} for all~$r \leq R$, then~$u$ is H\"older continuous in~$\Omega$ as well.
   \end{theorem}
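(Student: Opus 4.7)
The plan is to iterate the one-step boundary oscillation-decay Lemma~\ref{sec5.1_lem2} at a generic boundary point~$\xi_0\in\partial\Omega$ and then combine the resulting H\"older modulus at $\xi_0$ with the interior H\"older estimate from Theorem~\ref{sec4_thm2} via a standard chaining argument.

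As a first step, I would translate so that $\xi_0=0$ and, subtracting $g(0)$ from $u$, $g$ and $h$ (which leaves the obstacle problem invariant up to replacing $f(\xi,x)$ by $\tilde f(\xi,x):=f(\xi,x+g(0))$, still satisfying \eqref{reg f}--\eqref{monotonicity}), further reduce to $g(0)=0$. The H\"older continuity of $g$ on $B_R(0)$ then gives $\osc_{B_r}g\le L r^\beta$ for some $\beta\in(0,1)$, $L>0$ and every $r\le R$. Using the boundedness estimate of Theorem~\ref{sec5_thm1_boundedness} together with $u\in W^{s,p}(\Omega')\cap L^{p-1}_{sp}(\h^n)$, I would fix an initial radius $R_0\in(0,R]$ and a constant $\omega_0>0$ for which the three hypotheses of Lemma~\ref{sec5.1_lem2} hold at scale $R_0$, with $\sigma,\vartheta,\tau_0$ the constants produced by that lemma.

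Setting $r_k:=\tau_0^k R_0$, I would define inductively the admissible oscillation bounds
$$
\omega_{k+1}:=\max\big\{(1-\vartheta)\omega_k,\;8L r_{k+1}^{\beta},\;\vartheta^{-1}\|f\|_{L^\infty(B_{R_0})}^{1/(p-1)}\big\},
$$
so that all three assumptions of Lemma~\ref{sec5.1_lem2} remain in force at every scale~$r_k$. Applying the lemma inductively and summing the resulting geometric series in the standard way yields $\omega_k\le \c\, r_k^{\gamma}$ for a suitable small exponent $\gamma\in(0,\beta]$ depending on $n,p,s,\delta_\Omega$ and $\beta$, whence the boundary decay
$$
\osc_{B_r(0)} u + \sigma\,{\rm Tail}(u;0,r)\le \c\, r^{\gamma}\qquad\text{for every }r\in(0,R_0],
$$
i.e.~H\"older continuity at~$0$ with a modulus that is uniform as~$\xi_0$ varies on~$\partial\Omega$ (the constants in Lemma~\ref{sec5.1_lem2} depend only on $n,p,s,\delta_\Omega$).

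The main technical obstacle is precisely the third entry in the definition of~$\omega_k$: the hypothesis $\|f\|_{L^\infty(B_{R_0})}\le(\vartheta\omega_k)^{p-1}$ of Lemma~\ref{sec5.1_lem2} prevents $\omega_k$ from shrinking below the fixed threshold $\vartheta^{-1}\|f\|_\infty^{1/(p-1)}$, which would in principle destroy the H\"older decay at small scales. This is handled exactly as in the proof of Theorem~\ref{sec4_thm2}: at each iteration step one couples the decay from Lemma~\ref{sec5.1_lem2} with the $\varPsi$-type correction coming from the nonlocal weak Harnack inequality of Theorem~\ref{weak harnack}, which contributes a term that is itself H\"older in $r$ and therefore only lowers the final exponent~$\gamma$ without obstructing the H\"older character. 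Once the boundary modulus is established, a standard chaining argument that uses Theorem~\ref{sec4_thm2} in interior balls of radius comparable to $\mathrm{dist}(\xi,\partial\Omega)$ extends the estimate to a full H\"older modulus on a neighbourhood of $\partial\Omega$; combined with the interior H\"older continuity of Theorem~\ref{sec4_thm2}, this gives the H\"older continuity of $u$ in~$\Omega$, and, recalling that $u=g$ in $\h^n\smallsetminus\Omega$, in all of~$\Omega'$.
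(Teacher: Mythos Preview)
Your iteration of Lemma~\ref{sec5.1_lem2} at geometric scales is exactly the paper's strategy: it too reduces to $\xi_0=0$, $g(0)=0$, fixes an initial $\omega_0$, and iterates the lemma to obtain $\osc_{\tau^j B_0}u\le(1-\vartheta)^j\omega_0$, whence $\alpha=\log(1-\vartheta)/\log\tau$.

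The one substantive difference is the treatment of the $f$-hypothesis. Instead of your $\varPsi$-correction borrowed from the proof of Theorem~\ref{sec4_thm2}, the paper simply enlarges $\omega_0$ by the additive term $8\vartheta^{-1}\|f\|_{L^\infty(B_{R_0})}^{1/(p-1)}$ (see~\eqref{sec5.1_thm1_e1}) and then asserts that a single sufficiently small $\tau\le\tau_0$ makes both $\osc_{\tau^j B_0}g\le(1-\vartheta)^j\omega_0/8$ and $\|f\|_{L^\infty(\tau^j B_0)}\le\big(\vartheta(1-\vartheta)^j\omega_0\big)^{p-1}$ hold for every $j\ge0$; this yields the clean geometric decay directly, with no additive remainder and no separate chaining with the interior estimate. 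Your caution about the $f$-threshold is well founded: in the paper's formulation the right-hand side of the second condition decays in~$j$ while the left-hand side need not, so your $\varPsi$-type route---tracking the $f$-contribution through the proof of Lemma~\ref{sec5.1_lem2} as an additive H\"older term rather than imposing it as an a~priori smallness hypothesis---is the more robust way to close the iteration.
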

   \begin{proof}
   	With no loos of generality we may assume that~$\xi_0 =0$ and $g(0)=0$. Moreover, we can suppose that there exists~$R_0$ such that~$\osc_{B_0}g \leq \osc_{B_0}u$ for~$B_0 \equiv B_{R_0}(0)$ since otherwise the thesis follows by the H\"older continuity of~$g$.
   	
   	Let us define now, for~$\vartheta \in (0,1)$,
   	\begin{equation}\label{sec5.1_thm1_e1}
   		\omega_0 := 8 \left(\osc_{B_0}u + {\rm Tail}(u;0,R_0) +\frac{ \|f\|^\frac{1}{p-1}_{L^\infty(B_0)}}{\vartheta}\right).
   	\end{equation}
   	By Lemma~\ref{sec5.1_lem2} we have that there exist~$\tau_0$, $\sigma$ and $\vartheta$ depending only on $n$,~$p$,~$s$ and~$\delta_\Omega$ such that if
   	\begin{equation}\label{sec5.1_thm1_e2}
   		\osc_{B_r(0)} u + \sigma \, {\rm Tail}(u;0,r) \leq \omega \quad \osc_{B_r(0)}g \leq \frac{\omega}{8} \quad \mbox{and} \quad \|f\|_{L^\infty (B_r(0))} \leq \left(\vartheta \omega \right)^{p-1},
   	\end{equation}
   	hold for a ball~$B_r(0)$ and for~$\omega>0$, then
   	\begin{equation}\label{sec5.1_thm1_e3}
   		\osc_{B_{\tau r}(0)} u + \sigma \, {\rm Tail}(u;0,\tau r) \leq (1-\vartheta)\omega,
   	\end{equation}
   	holds for every~$\tau \in (0,\tau_0]$. Let us take~$\tau \leq \tau_0$ such that
   	\begin{equation}\label{sec5.1_thm1_e4}
   		\osc_{\tau^j B_0}g \leq (1-\vartheta)^j \frac{\omega_0}{8}, \quad \mbox{and} \qquad \|f\|_{L^\infty(\tau^j B_0)} \leq \big( \vartheta (1- \vartheta)^j \omega_0\big)^{p-1}, \quad \mbox{for every } j =0,1,\dots
   	\end{equation}
   	We now iterate~\eqref{sec5.1_thm1_e3} with~\eqref{sec5.1_thm1_e2} and~\eqref{sec5.1_thm1_e4}, noticing that the initial condition is satisfied if we choose~$\omega_0$ as in~\eqref{sec5.1_thm1_e1}. Thus, we obtain that
   	$$
   	\osc_{\tau^j B_0}u \leq (1-\vartheta)^j \omega_0 \quad \mbox{for every } j =0,1,\dots
   	$$
   	from which follows that~$u \in C^{0,\alpha}_{\rm loc}(B_0)$ with exponent~$\alpha = \log (1-\vartheta)/\log \tau \in (0,1)$.
   \end{proof}
   From the previous theorem we can easily show that the following result holds true as well.
   \begin{theorem}\label{sec5.1_thm2}
   Let~$s \in (0,1)$, $p \in (1,\infty)$ and, under assumptions~\eqref{reg f},~\eqref{bound f} and~\eqref{monotonicity}, let~$u$ solves the obstacle problem in~$\mathcal{K}_{g,h}(\Omega,\Omega')$. Assume~$\xi_0 \in \partial \Omega$ and $B_{2R}(\xi_0) \subset \Omega'$. If~$g \in \mathcal{K}_{g,h}(\Omega,\Omega')$ is continuous in $B_R(\xi_0)$ and~$\Omega$ satisfies condition~\eqref{condizione misura} for all~$r \leq R$, then~$u$ is continuous in~$B_R(\xi_0)$ as well.
   \end{theorem}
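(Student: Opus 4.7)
The plan is to adapt the iterative proof of Theorem~\ref{sec5.1_thm1} to the setting where the boundary datum $g$ is only continuous. Continuity of $u$ at interior points of $B_R(\xi_0)\cap\Omega$ follows from Corollary~\ref{sec4_corol1} (under the standing assumption that $h$ is continuous in $\Omega$ or $h\equiv-\infty$), while at points in $B_R(\xi_0)\smallsetminus\overline{\Omega}$ the equality $u=g$ a.e.\ together with the continuity of $g$ identifies a continuous representative. So I focus on an arbitrary boundary point of $\Omega$ inside $B_R(\xi_0)$, which up to translation I assume to be the origin with $g(0)=0$, and denote by $\omega_g(\rho):=\osc_{B_\rho(0)}g$ the modulus of continuity of $g$ at $0$, which vanishes as $\rho\to0^+$.

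Let $(\tau_0,\sigma,\vartheta)$ be the parameters produced by Lemma~\ref{sec5.1_lem2}, fix $\tau\in(0,\tau_0]$, and pick $R_0>0$ with $B_{R_0}\subset B_R$. Set
\[
\omega_0:=8\Bigl(\osc_{B_{R_0}}u+\sigma\,\textup{Tail}(u;0,R_0)+\vartheta^{-1}\|f\|_{L^\infty(B_{R_0})}^{1/(p-1)}\Bigr),
\]
define $r_j:=\tau^j R_0$, and build inductively
\[
\omega_{j+1}:=\max\Bigl((1-\vartheta)\omega_j,\;8\,\omega_g(r_{j+1}),\;\vartheta^{-1}\|f\|_{L^\infty(B_{r_{j+1}})}^{1/(p-1)}\Bigr).
\]
With this choice the triple of smallness conditions \eqref{sec5.1_thm1_e2} is satisfied at each scale $r_j$ with parameter $\omega_j$, so Lemma~\ref{sec5.1_lem2} yields
\[
\osc_{B_{r_{j+1}}}u+\sigma\,\textup{Tail}(u;0,r_{j+1})\leq(1-\vartheta)\omega_j\leq\omega_{j+1},
\]
which closes the induction. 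The desired continuity at $0$ follows once we establish $\omega_j\to0$.

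The main obstacle is precisely this last convergence: the first entry in the maximum contributes the clean geometric factor $(1-\vartheta)^j$, the second tends to zero by continuity of $g$ at $0$, but the third is a constant unless the $L^\infty$-norm of $f$ is controlled at small scales. As in the H\"older case of Theorem~\ref{sec5.1_thm1}, this is handled by an \emph{a priori} smallness of $\|f\|_{L^\infty(B_{R_0})}$ with respect to the target oscillation, obtained by preliminarily shrinking $R_0$. Once this is arranged, for any prescribed $\eta>0$ one chooses $R_0$ small enough that the $f$-contribution is $\leq\eta$, and then $j$ large enough that $(1-\vartheta)^j\omega_0\leq\eta$ and $\omega_g(r_j)\leq\eta$; the three terms in the maximum are then simultaneously $\leq\eta$, hence $\osc_{B_{r_j}}u\leq\eta$, which gives continuity of $u$ at the boundary point. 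The only new ingredient beyond the proof of Theorem~\ref{sec5.1_thm1} is the replacement of the H\"older control $\osc_{B_\rho}g\leq C\rho^\alpha$ by the generic modulus $\omega_g$, and the accompanying bookkeeping that shows the running maximum cannot stagnate at an intermediate value.
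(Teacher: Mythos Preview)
Your overall strategy is exactly what the paper has in mind: the paper gives no detailed argument for Theorem~\ref{sec5.1_thm2} and simply remarks that it follows from the proof of Theorem~\ref{sec5.1_thm1}. Replacing the geometric sequence $(1-\vartheta)^j\omega_0$ by the running maximum
\[
\omega_{j+1}=\max\Bigl((1-\vartheta)\omega_j,\;8\,\omega_g(r_{j+1}),\;\vartheta^{-1}\|f\|_{L^\infty(B_{r_{j+1}})}^{1/(p-1)}\Bigr)
\]
so that all three hypotheses of Lemma~\ref{sec5.1_lem2} are satisfied at each step, and then arguing that $\omega_j\to 0$, is the natural adaptation.

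There is, however, a genuine gap in your handling of the $f$-contribution. You write that the floor $\vartheta^{-1}\|f\|_{L^\infty(B_{R_0})}^{1/(p-1)}$ is made small ``by preliminarily shrinking $R_0$'', and then, for a given $\eta>0$, you choose $R_0$ so that this quantity is $\le\eta$. This does not work: under the standing assumption~\eqref{bound f} the function $f$ is merely locally bounded, and $\|f\|_{L^\infty(B_{R_0})}$ need not tend to zero as $R_0\to 0$. With your recursion as stated the sequence $\omega_j$ is trapped above the fixed positive level $\vartheta^{-1}\|f\|_{L^\infty}^{1/(p-1)}$, and the argument stalls.

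The correct fix is to look inside the proof of Lemma~\ref{sec5.1_lem2}: the hypothesis $\|f\|_{L^\infty(B_R)}\le(\vartheta\omega)^{p-1}$ is used only to bound the term $(2\tau R)^{sp}\varepsilon^{1-p}\|f\|_{L^\infty(B_R)}$ arising from Lemma~\ref{sec5_lem3}, with $\varepsilon=\vartheta\omega$. Hence what is really needed at scale $r$ is
\[
r^{sp}\,\|f\|_{L^\infty(B_r)}\ \le\ c\,(\vartheta\omega)^{p-1},
\]
and the corresponding floor in the iteration is $c\,\vartheta^{-1}r_j^{sp/(p-1)}\|f\|_{L^\infty}^{1/(p-1)}$, which does go to zero with $j$. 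With this correction your scheme closes: the three competitors in the maximum are $(1-\vartheta)\omega_j$, $8\omega_g(r_{j+1})$, and $c\,\vartheta^{-1}r_{j+1}^{sp/(p-1)}\|f\|_{L^\infty}^{1/(p-1)}$, all of which tend to zero, and the standard argument then gives $\omega_j\to 0$. (This same refinement is implicit in the paper's iteration for Theorem~\ref{sec5.1_thm1}, where the stated condition on $f$ in~\eqref{sec5.1_thm1_e4} suffers from the identical defect unless one reads it through the proof of Lemma~\ref{sec5.1_lem2}.)
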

 
  Combining together Theorem~\ref{sec4_thm2}, Corollary~\ref{sec4_corol1}, Theorem~\ref{sec5.1_thm1} and Theorem~\ref{sec5.1_thm2} we obtain Theorem~\ref{boundary reg}.
  
  \section{Properties of fractional weak supersolutions}\label{sec_weak_supersol}
   In this section we prove that weak supersolutions to~\eqref{problema2} enjoy some properties such as boundedness estimate and comparison principle. Moreover, we give  a characterization of their pointwise behaviour and recall some helpful facts about convergence of sequences of weak supersolutions that will be used in the proof of Theorem~\ref{perron}. All the forthcoming results are of independent interest in the analysis of nonlinear fractional equations in the Heisenberg group. They are the natural extension of the ones proven in~\cite{KKP17} for the homogeneous end Euclidean case.
  
   We always assume that~$f$ satisfies conditions~\eqref{reg f},~\eqref{bound f}, and~\eqref{monotonicity}. However, in most of the forthcoming properties condition~\eqref{monotonicity} is not strictly necessary, thus most of the following propositions simply work requiring only the regularity hypothesis in~\eqref{reg f} and~\eqref{bound f}. When not explicitly specified in the statements of the results we only assume that only this regularity conditions hold true.
  
  \subsection{A priori bounds for weak supersolutions}
  We show that weak supersolutions satisfy a Caccioppoli-type inequality with tail (Lemma~\ref{s2 lem2 Cacc} below). This estimate will then imply an upper bound on the fractional norm for weak supersolutions, namely Lemma~\ref{s3 bdd above seminorm}.
  
  Let us recall the following result whose proof is omitted being a plain extension of its Euclidean counterpart~\cite[Lemma~3]{KKP17} and of Theorem~\ref{teo_bdd}.
  
  \begin{lemma}\label{s2 lem1 bdd below}
  Let~$s \in (0,1)$, $p \in (1,\infty)$. Let~$u$ be a weak supersolution in~$\Omega$,~$h \in L^{p-1}_{sp}(\h^n)$ and assume that~$h \leq u \leq 0$ almost everywhere in~$\h^n$. Then, for all~$D \Subset \Omega$ there is a constant~$\c \equiv \c(n,p,s,\Lambda,\Omega,D,h)>0$ such that
  $$
  \essinf_D u \geq -\c.
  $$
  \end{lemma}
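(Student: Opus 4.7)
The approach I would take is to reduce the desired lower bound for~$u$ to the interior $L^\infty$\nobreakdash-bound for a nonnegative subsolution already supplied by Theorem~\ref{teo_bdd}. The key observation is that the principal part of~$\l$ is odd (since~$t\mapsto|t|^{p-2}t$ is odd), so setting~$v:=-u$ converts the weak supersolution inequality for~$u$ into a weak subsolution inequality for~$v$ with respect to the new source~$\tilde{f}(\xi,x):=-f(\xi,-x)$. This~$\tilde f$ still obeys~\eqref{reg f}--\eqref{bound f} with the same~$L^\infty$ bounds as~$f$, so Theorem~\ref{teo_bdd} applies to~$v$. The standing hypothesis~$h\leq u\leq 0$ a.~\!e.~in~$\h^n$ gives~$0\leq v\leq -h$ a.~\!e., so~$v_+=v$ and
$$
{\rm Tail}(v;\xi_0,r)\;\leq\;{\rm Tail}(-h;\xi_0,r)\;\leq\;\c(n,p,s,r,h)\;<\;\infty
$$
for every ball~$B_r(\xi_0)$, the finiteness being a direct consequence of~$h\in L^{p-1}_{sp}(\h^n)$ combined with Lemma~\ref{lem1}.

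I would then fix~$\xi_0\in D$ and a radius~$r<\dist(D,\partial\Omega)/2$, so that~$B_r(\xi_0)\Subset\Omega$, and apply Theorem~\ref{teo_bdd} to~$v$ on a one-parameter family of nested balls~$B_{\sigma r}(\xi_0)$,~$\sigma\in[\tfrac{1}{2},1]$. The tail term on the right-hand side is already controlled by a constant depending only on~$n,p,s,r,h$, as just noted. The real obstacle is the local~$L^p$-mean of~$v$: the pointwise bound~$v\leq -h$ supplies only~$L^{p-1}$-information on~$h$ locally (via~$h\in L^{p-1}_{sp}(\h^n)$), not~$L^p$. To bridge the gap I would use the interpolation
$$
v^p\;\leq\;\Big(\esssup_{B_{\sigma r}(\xi_0)}v\Big)\, v^{p-1}\;\leq\;\Big(\esssup_{B_{\sigma r}(\xi_0)}v\Big)\,(-h)^{p-1},
$$
which turns the~$L^p$-mean into the product of~$\esssup v$ raised to the power~$1/p$ and a constant depending only on the local~$L^{p-1}$-norm of~$h$.

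Writing~$M(\sigma):=\esssup_{B_{\sigma r}(\xi_0)}v$, the combination of the previous two steps with Theorem~\ref{teo_bdd} produces an inequality of the form
$$
M(\sigma/2)\;\leq\;\c_1\;+\;\c_2\,(\sigma'-\sigma)^{-\alpha}\,M(\sigma')^{1/p},\qquad \tfrac{1}{2}\leq\sigma<\sigma'\leq 1,
$$
with~$\c_1,\c_2$ depending only on~$n,p,s,\Lambda,\Omega,D,h$ and~$\|f\|_{L^\infty(\Omega)}$. A single application of Young's inequality converts~$\c_2 M(\sigma')^{1/p}$ into~$\tfrac{1}{2}M(\sigma')$ plus a (larger) constant, and the standard iteration lemma for nested sublevels (as used throughout~\cite{DKP14,DKP16,KKP17}) then yields~$M(1/2)\leq\c$. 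Since~$D\Subset\Omega$, a finite covering of~$\overline{D}$ by balls~$B_{r/2}(\xi_0)$ of the above type—uniform in the base point—produces the claimed bound~$\essinf_D u\geq -\c$. The main subtlety is precisely this absorption step: it forces the argument to run on a one-parameter family of nested balls rather than on a single fixed radius, but once the mechanism is set up, the remaining ingredients are routine adaptations of the interior theory recalled in Section~\ref{sec_preliminaries}.
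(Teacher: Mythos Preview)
Your approach is correct and is precisely the route the paper has in mind: it omits the proof, citing it as a direct extension of \cite[Lemma~3]{KKP17} combined with Theorem~\ref{teo_bdd}, which is exactly the reduction $v:=-u$ to a nonnegative subsolution, control of the tail via~$h\in L^{p-1}_{sp}(\h^n)$, and the interpolation\,+\,iteration step to pass from the~$L^p$-mean to the available~$L^{p-1}$-information on~$h$. The only slip is notational---the self-improving inequality should read $M(\sigma)\le \c_1+\c_2(\sigma'-\sigma)^{-\alpha}M(\sigma')^{1/p}$ for $\tfrac12\le\sigma<\sigma'\le1$ (and one should note that $u\in W^{s,p}_{\rm loc}(\Omega)$ gives $v\in L^p_{\rm loc}$, so~$M$ is finite and the iteration lemma applies)---but the mechanism is right.
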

  
  From Theorem~\ref{teo_caccioppoli} we can deduce another Caccioppoli-type estimate for weak supersolutions.
  
  \begin{lemma}\label{s2 lem2 Cacc}
  Let~$s \in (0,1)$, $p \in (1,\infty)$ and~$M >0$. Suppose that~$u$ is a weak supersolution in~$B_{2r} \equiv B_{2r}(\xi_0)$ such that~$u \leq M $ in~$B_{3r/2}$. Then, for a positive constant~$\c \equiv \c(n,p,s,\Lambda)$, it holds
  \begin{equation}
  	\int_{B_r} \ \dashint_{B_r} \frac{|u(\xi)-u(\eta)|^{p}}{|\eta^{-1}\circ \xi|_{\h^n}^{Q+sp}}\dxieta \leq \c \, \max(r^{-sp},1) \, H^p + \c \,\|f\|_{L^\infty(B_{3r/2})} \, H,
  \end{equation}
  where
  $$
  H := M + \left( \ \dashint_{B_{3r/2}}u_-^p(\xi) \dxi\right)^\frac{1}{p} + \textup{Tail}(u_-;\xi_0,3r/2) +\left( \ \dashint_{B_{3r/2}} f_-^\frac{p}{p-1}(\xi,u) \dxi\right)^\frac{1}{p}.
  $$
  \end{lemma}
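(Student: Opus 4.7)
The natural strategy is to derive the estimate from the Caccioppoli-type inequality for weak subsolutions already at our disposal (Theorem~\ref{teo_caccioppoli}), specialized to $\tilde{u}:=-u$. Indeed, since $L(a,b)=|a-b|^{p-2}(a-b)$ satisfies $L(-a,-b)=-L(a,b)$, the supersolution inequality for $u$ with datum $f(\xi,u)$ translates into $\tilde u$ being a weak subsolution with datum $\tilde f(\xi,y):=-f(\xi,-y)$; in particular $\|\tilde f(\cdot,\tilde u)\|_{L^\infty(B_{3r/2})}=\|f\|_{L^\infty(B_{3r/2})}$. The hypothesis $u\leq M$ on $B_{3r/2}$ becomes $\tilde u\geq -M$ on the same ball, which makes $k:=-M$ the natural truncation threshold. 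I would choose a cutoff $\varphi\in C^\infty_0(B_{5r/4})$ with $\varphi\equiv 1$ on $B_r$, $0\leq \varphi\leq 1$ and $|\nabla_{\h^n}\varphi|\leq C/r$, and apply Theorem~\ref{teo_caccioppoli} to $\tilde u$ on the ball $B_{3r/2}$ with this choice of $k$ and $\varphi$. The resulting truncation $w_+=(M-u)_+$ coincides with $M-u$ on $B_{3r/2}$ and globally obeys the bound $w_+\leq M+u_-$.

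The target estimate then follows once one observes that $|w_+(\xi)\varphi(\xi)-w_+(\eta)\varphi(\eta)|=|u(\xi)-u(\eta)|$ on $B_r\times B_r$, since there $\varphi\equiv 1$ and $w_+=M-u$; the left-hand side of the claim (prior to averaging) is therefore controlled by the left-hand side of Theorem~\ref{teo_caccioppoli}, and only the three terms on its right-hand side remain to estimate. For the gradient contribution, combining $|\varphi(\xi)-\varphi(\eta)|\leq C\min(|\eta^{-1}\circ\xi|_{\h^n}/r,1)$ with a standard dyadic splitting yields $\int_{B_{3r/2}}|\varphi(\xi)-\varphi(\eta)|^p|\eta^{-1}\circ\xi|_{\h^n}^{-Q-sp}\deta\leq Cr^{-sp}$ uniformly in $\xi$, so that $w_+\leq M+u_-$ and integration in $\xi$ produce a total bounded by $Cr^{Q-sp}H^p$. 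For the nonlocal tail factor, the elementary inequality $w_+^{p-1}\leq C(M^{p-1}+u_-^{p-1})$ combined with Lemma~\ref{lem1} (for the $M^{p-1}$ piece) and with the definition~\eqref{tail} of the tail (for the $u_-^{p-1}$ piece) gives, for $\eta\in\textup{supp}\,\varphi$, the estimate $\int_{\h^n\setminus B_{3r/2}}w_+^{p-1}(\xi)|\eta^{-1}\circ\xi|_{\h^n}^{-Q-sp}\dxi\leq Cr^{-sp}(M^{p-1}+\textup{Tail}(u_-;\xi_0,3r/2)^{p-1})$; multiplying by $\int_{B_{3r/2}}w_+\varphi^p\leq Cr^Q H$ and using $M^{p-1}+\textup{Tail}^{p-1}\leq CH^{p-1}$ delivers another $Cr^{Q-sp}H^p$. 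Finally the $\|f\|_{L^\infty(B_{3r/2})}$ coefficient appearing in Theorem~\ref{teo_caccioppoli} contributes $Cr^Q\|f\|_{L^\infty(B_{3r/2})}H$. Dividing everything by $|B_r|\sim r^Q$ and consolidating the two $r^{-sp}$ contributions under $\max(r^{-sp},1)$ yields the stated inequality.

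The main technical hurdle is the reduction, in the nonlocal tail estimate, from a family of kernels $|\eta^{-1}\circ\xi|_{\h^n}^{-Q-sp}$ indexed by $\eta\in\textup{supp}\,\varphi$ to a single tail centered at $\xi_0$. For this I would rely on the quasi-triangle argument worked out in Remark~\ref{sec3.1_rem2} (admissible thanks to Proposition~\ref{prop1}), which produces $|\eta^{-1}\circ\xi|_{\h^n}\geq c|\xi_0^{-1}\circ\xi|_{\h^n}$ whenever $\xi\in\h^n\setminus B_{3r/2}(\xi_0)$ and $\eta\in B_{5r/4}(\xi_0)$, with a purely geometric constant. Let me remark that the route above yields the claimed bound with $H$ replaced by the smaller quantity $M+(\dashint_{B_{3r/2}}u_-^p\,\dxi)^{1/p}+\textup{Tail}(u_-;\xi_0,3r/2)$, which is dominated by the $H$ of the statement; the extra summand $(\dashint_{B_{3r/2}}f_-^{p/(p-1)}(\xi,u)\,\dxi)^{1/p}$ inside $H$ is therefore absorbed for free. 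It does however arise naturally if one derives the estimate directly from the supersolution inequality with the polynomial test $\psi=(M-u+\varepsilon)\varphi^p$, applying Young's inequality $f_-V\leq\delta V^p+C_\delta f_-^{p/(p-1)}$ to the zero-order term $\int f(\xi,u)\psi\,\dxi$ (with $V=M-u+\varepsilon$) and absorbing the $\delta V^p$ contribution on the left-hand side; either path reaches the conclusion.
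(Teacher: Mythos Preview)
Your proposal is correct, and it takes a different route from the paper's proof. The paper argues directly: it tests the supersolution inequality with $\psi=(2H-u)\varphi^p$, where $\varphi\in C^\infty_0(B_{4r/3})$ with $\varphi\equiv 1$ on $B_r$, and then splits the resulting double integral into a local piece $I_1$ over $B_{3r/2}\times B_{3r/2}$ and a nonlocal piece $I_2$, estimating each by hand in the spirit of the proof of Theorem~\ref{teo_caccioppoli}; the zero-order term $\int f(\xi,u)\psi\,\dxi$ is bounded below by $-cH^p-c\|f\|_{L^\infty(B_{3r/2})}H$ via H\"older's inequality, and it is precisely this $-cH^p$ that forces the $\max(r^{-sp},1)$ in the statement and makes the summand $(\dashint_{B_{3r/2}}f_-^{p/(p-1)})^{1/p}$ in $H$ actually enter.

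Your approach is more modular: by the odd symmetry $L(-a,-b)=-L(a,b)$ you reduce to the already-available Caccioppoli inequality for subsolutions applied to $\tilde u=-u$ at level $k=-M$. This buys you a slightly sharper conclusion, since your chain of estimates delivers $cr^{-sp}H'^p+c\|f\|_{L^\infty(B_{3r/2})}H'$ with the smaller quantity $H'=M+(\dashint_{B_{3r/2}}u_-^p)^{1/p}+\textup{Tail}(u_-;\xi_0,3r/2)\leq H$, which of course implies the stated bound. The alternative you sketch at the end, with the test $\psi=(M-u+\varepsilon)\varphi^p$ and a Young splitting of the zero-order term, is the one closest to the paper's computation (the paper simply replaces $M$ by $2H$ in the shift, which makes $w=2H-u$ comparable to $H$ on $B_{3r/2}$ and avoids the $\varepsilon$-regularisation).
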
 
   \begin{proof}
   Let us choose a cut-off function~$\p \in C^\infty_0(B_{4r/3})$ such that~$0 \leq \p \leq 1 $ and $\p \equiv 1$ in~$B_r$ and~$ |\nabla_{\h^n}\p| \leq \c /r$. Setting~$ w := 2H -u$ and choosing as test function~$\psi := w\p^p$ in Definition~\ref{solution to inhomo pbm}, we get
   \begin{align}\label{s2 lem2 1}
   & \frac{1}{|B_r|}\int_{\h^n} f( \xi,u) w(\xi)\p^p(\xi) \dxi\notag\\
   &\leq \frac{1}{|B_r|}\int_{\h^n}\int_{\h^n}\frac{L(u(\xi),u(\eta))\left(w(\xi)\p^p(\xi)-w(\eta)\p^p(\eta)\right)}{\dd(\eta^{-1}\circ \xi)^{Q+sp}}\dxieta\notag\\
   & = -\frac{1}{|B_r|} \int_{B_{3r/2}}\int_{B_{3r/2}}\frac{L(w(\xi),w(\eta))(w(\xi)\p^p(\xi)-w(\eta)\p^p(\eta))}{\dd(\eta^{-1}\circ \xi)^{Q+sp}}\dxieta\notag\\
   & \quad +\frac{2}{|B_r|}\int_{\h^n \smallsetminus B_{3r/2}}\int_{B_{3r/2}}\frac{L(u(\xi),u(\eta))w(\xi)\p^p(\xi)}{\dd(\eta^{-1}\circ \xi)^{Q+sp}}\dxieta\notag\\
   & =: -I_1+2I_2.
   \end{align}
   We estimate separately the integrals~$I_1$ and~$I_2$ in the righthand side of~\eqref{s2 lem2 1}. We start with~$I_1$. As in the proof of Theorem~\ref{teo_caccioppoli}, assuming without losing generality,~$w(\xi) \geq w(\eta)$ we can deduce that
   \begin{align}\label{s2 lem2 2}
   I_1 & \geq \frac{1}{\c}\int_{B_{3r/2}}\ \dashint_{B_{3r/2}} \frac{|u(\xi)-u(\eta)|^p}{|\eta^{-1}\circ \xi|_{\h^n}^{Q+sp}}\left(\max(\p(\xi),\p(\eta))\right)^p\dxieta\notag\\
   & \quad -\c\int_{B_{3r/2}} \ \dashint_{B_{3r/2}} (2H -u(\xi))^p\frac{|\p(\xi)-\p(\eta)|^p}{|\eta^{-1}\circ \xi|_{\h^n}^{Q+sp}} \dxieta\notag\\
   & \geq \frac{1}{\c}\int_{B_{3r/2}} \ \dashint_{B_{3r/2}} \frac{|u(\xi)-u(\eta)|^p}{|\eta^{-1}\circ \xi|_{\h^n}^{Q+sp}}\left(\max(\p(\xi),\p(\eta))\right)^p\dxieta - \c \, r^{-sp}H^p,
   \end{align}
   where in the last inequality we have used Lemma~\ref{lem1} in order to estimate the singular integral given by the homogeneous norm~$|\cdot|_{\h^n}$ in the same spirit of Lemma~1.4 in~\cite{MPPP21}.
   
   For the second integral~$I_2$ we have
   \begin{align}\label{s2 lem2 3}
   I_2 & \leq \c\int_{\h^n \smallsetminus B_{3r/2}}\int_{B_{4r/3}}(H^{p-1}+u^{p-1}_-(\eta))(2H+u_-(\xi))|\eta^{-1}\circ \xi_0|_{\h^n}^{-Q-sp}\dxieta\notag\\
   & \leq \c \, r^{-sp}H^p +\c \, H \int_{\h^n \smallsetminus B_{3r/2}}u_-^{p-1}(\eta)|\eta^{-1}\circ \xi_0|_{\h^n}^{-Q-sp}\deta \notag\\
   & \leq \c \, r^{-sp}H^p.
   \end{align}
   We are left with estimating the term on the lefthand side of~\eqref{s2 lem2 1}
   \begin{align}\label{s2 lem2 4}
   \frac{1}{|B_r|}\int_{\h^n}f(\xi,u)w(\xi)\p^p(\xi) \dxi & \geq  -\frac{1}{|B_r|}\int_{B_{3r/2}}f_-(\xi,u)(2H+u_-(\xi)) \dxi \notag\\
   & \geq -\c \left(H \ \dashint_{B_{3r/2}}f_-(\xi,u)\dxi + \dashint_{B_{3r/2}} f_-(\xi,u)u_-(\xi) \dxi\right)\notag\\
   & \geq -\c \, H^p -\c \, \|f\|_{L^\infty(B_{3r/2})}H.
   \end{align}
   Combining~\eqref{s2 lem2 2},~\eqref{s2 lem2 3} and~\eqref{s2 lem2 4} we obtain the desired inequality.
   \end{proof}
    
    Putting together all the results above we can extend with no difficulty the proof used in~\cite[Lemma~5]{KKP17} to show a uniform bound in $W^{s,p}$ for weak supersolutions.
    
    \begin{lemma}\label{s3 bdd above seminorm}
    Let $s \in (0,1)$, $p \in (1,\infty)$, $M>0$ and $h \in L^{p-1}_{sp}(\h^n)$ with $h \leq M$. Let $u$ be a weak supersolution in $\Omega$ such that $u \geq h$ almost everywhere in $\h^n$ and $u \leq M$ almost everywhere in $\Omega$. Then, for all $D \Subset\Omega$ there is a constant $\c = \c (n,p,s,\Lambda, \Omega, D, M, h)>0$ such that
    \begin{equation}\label{s3 1}
    \int_D \int_D \frac{|u(\xi)-u(\eta)|^p}{|\eta^{-1}\circ \xi|_{\h^n}^{Q+sp}}\dxieta \leq \c.
    \end{equation}
    \end{lemma}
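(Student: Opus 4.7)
The proof follows the Euclidean strategy of \cite[Lemma~5]{KKP17}. The plan is threefold: (i) upgrade the qualitative bound $u\geq h$ into a quantitative $L^\infty$ lower bound on a slight enlargement of $D$; (ii) feed this into the Caccioppoli-type inequality of Lemma~\ref{s2 lem2 Cacc} along a finite cover of $\overline{D}$; (iii) assemble the local pieces into an estimate on $D\times D$.

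For step~(i), one cannot invoke Lemma~\ref{s2 lem1 bdd below} directly: it demands $u\leq 0$ throughout $\h^n$, whereas here $u\leq M$ is only guaranteed inside $\Omega$. The natural remedy is to truncate. Set $w:=\min(u,M)-M$. Then $w\leq 0$ a.~\!e.\ in $\h^n$, $w=u-M$ in $\Omega$, and $w\geq \min(h,M)-M\in L^{p-1}_{sp}(\h^n)$. The truncation identity for the nonlocal operator $\l$ shows that $w$ is still a weak supersolution, while the monotonicity condition~\eqref{monotonicity} ensures that the corresponding source term inherits the $L^\infty$ bound in~\eqref{bound f}. Picking $D\Subset D'\Subset \Omega$ and applying Lemma~\ref{s2 lem1 bdd below} to $w$ yields
$$
\essinf_{D'} u \, \geq\, M-\c,\qquad \c=\c(n,p,s,\Lambda,\Omega,D',M,h).
$$

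For step~(ii), I would cover $\overline{D}$ by finitely many balls $B_{r_i/2}(\xi_i)$ so that $B_{2r_i}(\xi_i)\subset D'$ for each $i$. On every such ball one has $u\leq M$, so Lemma~\ref{s2 lem2 Cacc} applies, and the auxiliary quantity $H$ appearing in that lemma is uniformly bounded by the data: the term $M$ is given, the local $L^p$-norm of $u_-$ on $B_{3r_i/2}(\xi_i)$ is controlled by the pointwise bound from step~(i), the tail $\textup{Tail}(u_-;\xi_i,3r_i/2)$ is majorized by $\textup{Tail}(h_-;\xi_i,3r_i/2)<\infty$ since $u_-\leq h_-$ a.~\!e.\ and $h\in L^{p-1}_{sp}(\h^n)$, and the source contribution is absorbed by $\|f\|_{L^\infty(\Omega)}$ via~\eqref{bound f}. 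For step~(iii) I would split
$$
\int_D\int_D \frac{|u(\xi)-u(\eta)|^p}{|\eta^{-1}\circ\xi|_{\h^n}^{Q+sp}}\dxieta \, =\, I_{\rm near}+I_{\rm far}
$$
at a scale $\delta$ smaller than all the $r_i$. By the covering property, $I_{\rm near}$ is dominated by $\sum_i \int_{B_{r_i}(\xi_i)}\int_{B_{r_i}(\xi_i)}$, controlled by step~(ii); whereas $I_{\rm far}$ is estimated trivially using $|u|\leq M+\c$ on $D'$ together with Lemma~\ref{lem1} to integrate the kernel away from the diagonal.

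The one genuinely delicate point is step~(i), namely verifying that the truncation by a constant preserves the weak supersolution property for the full, inhomogeneous nonlocal operator on the Heisenberg group. The algebraic manipulation on the kernel is identical to the Euclidean one since it only uses the sign of the increments $u(\xi)-u(\eta)$; what is genuinely needed from the non-homogeneous setting is exactly the monotonicity assumption~\eqref{monotonicity}, which guarantees that truncating $u$ above by $M$ does not destroy the sign inequality coming from the source term $f(\xi,u)$.
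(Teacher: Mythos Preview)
Your proposal is correct and follows precisely the strategy the paper invokes, namely the argument of \cite[Lemma~5]{KKP17} combined with Lemma~\ref{s2 lem1 bdd below} and Lemma~\ref{s2 lem2 Cacc}; the paper gives no further details beyond this reference.

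One small inaccuracy worth flagging: your last paragraph overstates the role of the monotonicity hypothesis~\eqref{monotonicity} in step~(i). Since by assumption $u\leq M$ a.~\!e.\ in $\Omega$, the truncation $\min(u,M)$ coincides with $u$ throughout $\Omega$, so the source term $f(\xi,u)$ appearing in the weak formulation against any $\psi\in C^\infty_0(\Omega)$ is unchanged. The truncation therefore only modifies $u$ in $\h^n\smallsetminus\Omega$, where the supersolution inequality is preserved by the monotonicity of $b\mapsto L(a,b)$ and $a\mapsto L(a,b)$ alone (exactly the ``algebraic manipulation on the kernel'' you mention). The shifted source $\tilde f(\xi,x):=f(\xi,x+M)$ then automatically satisfies~\eqref{reg f} and~\eqref{bound f}, which is all Lemma~\ref{s2 lem1 bdd below} needs. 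This is consistent with the paper's remark at the beginning of Section~\ref{sec_weak_supersol} that condition~\eqref{monotonicity} is not strictly necessary for most of the results there, including this one.
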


   \subsection{Comparison principle for weak supersolutions}
   We show now that weak supersolutions enjoy a comparison principle. This result constitutes a fundamental tool in the whole PDE theory. The proof of the forthcoming Proposition differs to the class of problem considered in~\cite{KKP17} due to the presence of the datum~$f$. In particular, as already carefully explained in the Introduction, the monotonicity hypothesis~\eqref{monotonicity} is necessary in order to derive the desired comparison principle.
   
   \begin{prop}[{\bf Comparison principle}]\label{comparison a.e.}
   Let~$\Omega$ be a bounded open subsets of~$\h^n$. Let~$u \in W^{s,p}(\Omega)$ be a weak supersolution to~\eqref{problema2} in~$\Omega$ and let~$v\in W^{s,p}(\Omega)$ be a weak subsolution to~\eqref{problema2} in~$\Omega$ such that~$ v \leq u$ almost everywhere in~$\h^n \smallsetminus \Omega$ and 
    $$
   	\limsup_{\eta \rightarrow \xi}v(\eta) \leq \liminf\limits_{\eta \rightarrow \xi} u(\eta), \qquad \mbox{for any}~\xi \in \partial \Omega.
   $$ 
   Then, under assumptions~\eqref{reg f},~\eqref{bound f} and~\eqref{monotonicity}, we have that~$v \leq u$ a.\!~e. in~$\Omega$ as well.
   \end{prop}
   \begin{proof}
   We begin noticing that there exists a compact set~$K$ such that~$\{v > u\} \Subset K \Subset \Omega$. Hence, the function~$\psi := (v-u)_+ \in W^{s,p}_0(K)$ is a proper test function. Considering the weak formulation both for~$v$ and~$u$ in Definition~\ref{solution to inhomo pbm} with~$\psi$ defined above, we obtain
  \begin{equation}\label{s3 comp 2}
  \int_{\h^n} f(\xi,v) \psi(\xi) \dxi \geq \int_{\h^n}\int_{\h^n} \frac{L(v(\xi),v(\eta))(\psi(\xi)-\psi(\eta))}{\dd(\eta^{-1} \circ \xi)^{Q+sp}}\dxieta,
  \end{equation}
  and
   \begin{equation}\label{s3 comp 3}
  	\int_{\h^n} f(\xi,u) \psi(\xi) \dxi \leq \int_{\h^n}\int_{\h^n} \frac{L(u(\xi),u(\eta))(\psi(\xi)-\psi(\eta))}{\dd(\eta^{-1} \circ \xi)^{Q+sp}}\dxieta.
  \end{equation}
  Then, summing together~\eqref{s3 comp 2} with~\eqref{s3 comp 3} we eventually arrive at
   \begin{align}\label{s3 comp 1}
   0 & \leq \int_{\{v > u\}} (f(\xi,v)-f(\xi,u))(v(\xi)-u(\xi)) \dxi\notag\\
     & \quad + \iint_{\{v > u \}} \frac{\big(L(u(\xi),u(\eta))-L(v(\xi),v(\eta))\big)\big(v(\xi)-u(\xi)-v(\eta)+u(\eta)\big)} {\dd(\eta^{-1}\circ \xi)^{Q+sp}}\dxieta\notag\\
     & \quad + 2\int_{\{v \leq u\}}\int_{\{v > u \}} \frac{\big(L(u(\xi),u(\eta))-L(v(\xi),v(\eta)\big)\big(v(\xi)-u(\xi)\big)}{ \dd(\eta^{-1}\circ \xi)^{Q+sp}}\dxieta\notag\\
     & \leq 0
   \end{align}
  By the monotonicity of the function~$L(a,b)$ and by condition~\eqref{monotonicity}.
  Thus, we obtain that all terms in~\eqref{s3 comp 1} must be~$0$. This implies that~$\psi \equiv0$ almost everywhere on~$\{v > u  \}$. Hence, we obtain that~$|\{v > u\}|=0$. 
  \end{proof}

   \subsection{Pointwise behaviour of weak supersolutions}
   Let us focus now on the pointwise behaviour of weak supersolutions. Before stating our main result, i.~\!e.~Theorem~\ref{teo lsc}, some considerations are needed. First of all, due to the presence of the datum~$f$, we could not trivially prove that the function~$v:= u(\xi_0)-u$ for any~$\xi_0 \in \Omega$ is a weak subsolution to~\eqref{problema2}, as done for the homogeneous case in~\cite{KKP17}. Nevertheless such a function satisfies a Caccioppoli-type inequality with tail which is the starting point in proving a boundedness estimate as Theorem~\ref{teo_bdd}. Inequality~\eqref{eq_bdd} will have a central role in proving the forthcoming theorem. Indeed, thanks to the its ductility, we can choose a proper interpolation parameter~$\delta \in (0,1]$ in order to carefully estimate the nonlocal contributions and the local ones. Extending the procedure used in~\cite{KKP17} we have
   
   \begin{theorem}\label{teo lsc}
   Let~$s \in (0,1)$ and~$p \in (1,\infty)$ and let~$u$ be a weak supersolution in~$\Omega$ to problem~\eqref{problema2}. Then we have
   $$
   u(\xi) := \textup{ess}\liminf_{\eta \rightarrow \xi} u(\eta), \qquad \mbox{for a.~\!e.~}\xi \in \Omega.
   $$
   In particular, $u$ has a lower semicontinuous representative.
   \end{theorem}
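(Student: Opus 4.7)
The plan is to introduce the lower semicontinuous envelope
\[
u_*(\xi) := \textup{ess}\liminf_{\eta \to \xi} u(\eta),
\]
which is automatically lower semicontinuous on~$\Omega$, and to prove that~$u = u_*$ almost everywhere in~$\Omega$. This identification immediately yields both claims of the theorem.

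The inequality~$u_* \leq u$ a.e.\ is the easy direction. Since~$u \in W^{s,p}_{\rm loc}(\Omega) \subset L^p_{\rm loc}(\Omega)$, at every Lebesgue point~$\xi_0$ of~$u$ (a set of full measure, by the Lebesgue differentiation theorem on the doubling space~$(\h^n, \texttt{d})$) we have
\[
u_*(\xi_0) \, = \, \lim_{r \to 0^+} \essinf_{B_r(\xi_0)} u \, \leq \, \lim_{r \to 0^+} \dashint_{B_r(\xi_0)} u \, \dxi = u(\xi_0).
\]

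For the reverse inequality, which is the core of the proof, I would fix a Lebesgue point~$\xi_0$ of~$u$ with~$|u(\xi_0)|<\infty$ and an arbitrary~$k \in \r$ with~$k<u(\xi_0)$. It is enough to show
\[
\esssup_{B_{r/2}(\xi_0)}(k-u)_+ \to 0 \quad \text{as } r \to 0^+,
\]
for then~$\liminf_{r \to 0^+}\essinf_{B_r(\xi_0)} u \geq k$, and letting~$k \uparrow u(\xi_0)$ yields~$u_*(\xi_0) \geq u(\xi_0)$. The key observation is that~$v := k - u$ satisfies a subsolution inequality in~$\Omega$: the pointwise identity~$\l(-u) = -\l u$ together with the invariance of~$\l$ under additive constants converts the supersolution inequality for~$u$ into~$\l v \leq \tilde{f}$ weakly in~$\Omega$, with~$\tilde{f}(\xi) := -f(\xi, u(\xi)) \in L^\infty_{\rm loc}(\Omega)$ by~\eqref{bound f}. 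After a routine truncation~$u \wedge M$ reducing to the case~$u$ locally bounded above, the Caccioppoli-with-tail estimate of Theorem~\ref{teo_caccioppoli} and the local boundedness of Theorem~\ref{teo_bdd} apply to~$v_+ = (k-u)_+$ and give, for every~$\delta \in (0,1]$ and every~$r$ with~$B_{2r}(\xi_0) \Subset \Omega$,
\[
\esssup_{B_{r/2}(\xi_0)}(k - u)_+ \leq \delta \, {\rm Tail}((k-u)_+; \xi_0, r/2) + \c \, \delta^{-\gamma} \left(\, \dashint_{B_r(\xi_0)}(k - u)_+^p \, \dxi\right)^{\!1/p},
\]
with~$\c$ depending on~$n,p,s,\Lambda$ and~$\|f\|_{L^\infty(B_r)}$.

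It remains to let~$r \to 0^+$. The tail stays uniformly bounded on a compact neighborhood of~$\xi_0$ (where the truncation keeps~$(k-u)_+$ bounded, and the far-field contribution is controlled via~$u_- \in L^{p-1}_{sp}(\h^n)$ together with Lemma~\ref{lem1}), while the mean term tends to zero since~$(k-u)_+ \leq |u - u(\xi_0)|$ pointwise and~$\xi_0$ is a Lebesgue point. A balanced choice~$\delta = \delta(r) \to 0$ with~$\delta^{-\gamma} M(r) \to 0$, where~$M(r)$ denotes the mean term, forces the whole right-hand side to vanish. The main obstacle I foresee is exactly this reduction to a subsolution argument: the datum~$f = f(\cdot, u)$ depends on the unknown and~$\l$ is nonlinear, so the identity~$\l(-u) = -\l u$ together with the uniform~$L^\infty$ control of~$f(\cdot,u)$ supplied by~\eqref{bound f} is precisely what makes the Moser iteration behind Theorems~\ref{teo_caccioppoli}--\ref{teo_bdd} transfer unchanged to~$v_+$.
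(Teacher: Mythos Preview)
Your proposal is correct and follows essentially the same route as the paper, which does not spell out a proof but refers to the procedure in~\cite{KKP17} together with the boundedness estimate~\eqref{eq_bdd} applied to~$v=u(\xi_0)-u$ (or, as you do, to~$v=k-u$). Your observation that~$v$ is a genuine weak subsolution to~$\l v = \tilde f$ with the \emph{frozen} datum~$\tilde f(\xi)=-f(\xi,u(\xi))\in L^\infty_{\rm loc}(\Omega)$ is in fact a cleaner way of phrasing what the paper says more obliquely, namely that although~$v$ is not a subsolution to~\eqref{problema2} itself (because of the $u$-dependence in~$f$), it nonetheless satisfies the Caccioppoli inequality of Theorem~\ref{teo_caccioppoli}; the truncation step you mention is unnecessary, since~$(k-u)_+\le |k|+u_-$ with~$u_-\in L^{p-1}_{sp}(\h^n)$ already controls the tail, and local boundedness below of~$u$ follows from Theorem~\ref{teo_bdd} applied to~$-u$.
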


   \subsection{Convergence result for weak supersolutions}
      We conclude this section recalling some convergence results for sequences of weak supersolutions from the Euclidean setting. 
      This properties will turn out to be of crucial importance in the following section.

      \begin{theorem}
      Let~$s \in (0,1)$ and~$ p \in (1,\infty)$. Consider~$g,h \in L^{p-1}_{sp}(\h^n)$ such that~$h \leq g$ in~$\h^n$. Let~$\{u_j\}$ be a sequence of weak supersolutions in~$\Omega$ such that~$h \leq u_j \leq g $ almost everywhere in~$\h^n$ and~$u_j$ is uniformly essentially bounded from above in~$\Omega$. Let
      $$
      u(\xi):= \lim_{j \rightarrow \infty}u_j(\xi), \qquad \mbox{for a.~\!e.} \ \xi \in \h^n.
      $$
      Then, under assumptions~\eqref{reg f},~\eqref{bound f} and~\eqref{monotonicity},~$u$ is a weak supersolution in~$\Omega$.
      \end{theorem}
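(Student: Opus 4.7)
The plan is to pass to the limit in the weak supersolution inequality
$$
\int_{\h^n}\int_{\h^n} \frac{L(u_j(\xi),u_j(\eta))(\psi(\xi)-\psi(\eta))}{\dd(\eta^{-1}\circ\xi)^{Q+sp}} \dxieta \geq \int_{\Omega} f(\xi,u_j)\psi(\xi)\,\dxi,
$$
satisfied by each $u_j$ for every non-negative $\psi \in C^\infty_0(\Omega)$, and to show that the same inequality holds for $u$. Fix such a $\psi$ and pick $D \Subset \Omega$ containing a neighbourhood of $\textup{supp}\,\psi$. The uniform essential upper bound on $\Omega$ together with $u_j \geq h \in L^{p-1}_{sp}(\h^n)$ place us exactly in the hypotheses of Lemma~\ref{s3 bdd above seminorm}, which yields a uniform bound on $[u_j]_{W^{s,p}(D)}$. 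Combined with the pointwise control $|u_j| \leq \max(|h|,|g|)+M$ a.e., we obtain a bounded sequence in $W^{s,p}(D)$; extracting a (non-relabelled) subsequence gives weak convergence in $W^{s,p}(D)$ to the a.e.\ limit $u$.

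For the non-local term we decompose the integration domain as $(D\times D) \cup \bigl((\h^n \times \h^n) \smallsetminus (D\times D)\bigr)$. On $D\times D$ the integrand converges pointwise a.e., and H\"older's inequality gives
$$
\frac{|L(u_j(\xi),u_j(\eta))||\psi(\xi)-\psi(\eta)|}{\dd(\eta^{-1}\circ\xi)^{Q+sp}} \leq \c \, \frac{|u_j(\xi)-u_j(\eta)|^{p-1}|\psi(\xi)-\psi(\eta)|}{|\eta^{-1}\circ\xi|_{\h^n}^{Q+sp}},
$$
whose $L^1(D\times D)$ norm is bounded by $[u_j]_{W^{s,p}(D)}^{p-1}[\psi]_{W^{s,p}(D)}$ uniformly in~$j$; together with the smoothness of $\psi$ this yields the equi-integrability required to invoke Vitali's theorem and pass to the limit. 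Outside $D\times D$, the support condition on $\psi$ forces $\dd(\eta^{-1}\circ\xi)$ to stay bounded below by $\textup{dist}(\textup{supp}\,\psi,\partial D)>0$, so the kernel is not singular; the uniform pointwise majorisation
$$
|L(u_j(\xi),u_j(\eta))| \leq \c\bigl(|h(\xi)|^{p-1}+|g(\xi)|^{p-1}+|h(\eta)|^{p-1}+|g(\eta)|^{p-1}+M^{p-1}\bigr),
$$
combined with $g,h \in L^{p-1}_{sp}(\h^n)$ and Lemma~\ref{lem1} (as in Remark~\ref{sec3.1_rem2}), provides a common $L^1$-majorant and dominated convergence yields the desired passage.

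For the forcing term, hypothesis \eqref{bound f} supplies a uniform $L^\infty(\Omega)$-bound on $f(\cdot,u_j(\cdot))$, while the continuity of $f(\xi,\cdot)$ from \eqref{reg f} together with $u_j \to u$ a.e.\ yields $f(\xi,u_j) \to f(\xi,u)$ almost everywhere; dominated convergence then sends $\int_\Omega f(\xi,u_j)\psi\,\dxi$ to $\int_\Omega f(\xi,u)\psi\,\dxi$. Collecting the three passages produces the weak supersolution inequality for $u$; since $u_- \leq h_- \in L^{p-1}_{sp}(\h^n)$ we also have $u_- \in L^{p-1}_{sp}$, so $u$ qualifies as a weak supersolution in~$\Omega$ in the sense of Definition~\ref{solution to inhomo pbm}. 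The principal obstacle is precisely the equi-integrability step on the local block $D\times D$ of the quadratic form, which rests on the a priori fractional Sobolev bound supplied by Lemma~\ref{s3 bdd above seminorm}; the monotonicity condition \eqref{monotonicity} does not enter this argument directly, being used elsewhere in the paper only through the existence and comparison theory.
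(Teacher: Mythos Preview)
Your argument is correct and follows essentially the same route as the paper, which omits the proof as a verbatim repetition of \cite[Theorem~10]{KKP17}: the uniform $W^{s,p}$-bound from Lemma~\ref{s3 bdd above seminorm} controls the local block (your equi-integrability via the H\"older splitting $F_j\in L^{p'}$ bounded, $G\in L^p$ fixed, is precisely the mechanism used there), the tail block is handled by dominated convergence through the $L^{p-1}_{sp}$ barriers $h,g$, and the passage in the forcing term relies only on~\eqref{reg f}--\eqref{bound f}. Your closing remark that~\eqref{monotonicity} plays no role here is consistent with the paper's own comment preceding the statement.
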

      The proof of the previous theorem is omitted since it is a verbatim repetition of the one presented in~\cite[Theorem~10]{KKP17}. Indeed, thanks to the regularity assumption made on the datum~$f$ in~\eqref{reg f} and~\eqref{bound f}, we can easily pass to the limit on the righthand side of the integral inequality in Definition~\ref{solution to inhomo pbm}. The lefthhand side can then be treated as in the Euclidean case presented in~\cite{KKP17}. As main consequence of the previous theorem we have    
      \begin{corol}\label{corol sequence weak solutions}
      Let~$s \in (0,1)$, $p \in (1,\infty)$, $h$,~$g \in L^{p-1}_{sp}(\h^n)$ and let~$\{u_j\}$ be a sequence of continuous weak solutions in~$\Omega$ such that~$h \leq u_j \leq g$ and~$u:=\lim_{j \rightarrow \infty}u_j$ exists almost everywhere in~$\h^n$. Then, under assumptions~\eqref{reg f},~\eqref{bound f} and~\eqref{monotonicity},~$u$ exists at every point of $\Omega$ and is a continuous weak solution in~$\Omega$.
      \end{corol}

     \section{The Perron method}\label{sec_perron} 
     We have everything needed to prove Theorem~\ref{perron}. Let us remark that we assume as main conditions~$s \in (0,1)$,~$p\in(1,\infty)$ and~$f$ satisfying conditions~\eqref{reg f},~\eqref{bound f} and~\eqref{monotonicity}, without recalling them every time in the statements of the results presented below.
           
      \begin{defn}[{\bf Perron solutions}]\label{perron classes and solutions} Let~$\Omega$ be an open and bounded subset of~$\h^n$ and assume that~$g \in W^{s,p}(\h^n)$. We define the \textup{upper class} ~$\mathcal{U}_g$ as the family of all functions~$u$ such that
      \begin{enumerate}[\rm(i)]
      \item $u: \h^n \rightarrow (-\infty,+\infty]$ is lower semicontinuous in~$\Omega$ and
      there exists~$h \in L^{p-1}_{sp}(\h^n)$ such that~$u \leq h$;
      \item  Given~$D \Subset \Omega$ and $v \in C(\overline{D})$ a weak solution in~$D$ to~\eqref{problema2} such that~$v \leq u$ almost everywhere in $\h^n \smallsetminus D$ and
             $$
            v(\xi) \leq \liminf\limits_{\eta \rightarrow \xi} u(\eta), \qquad \forall \xi \in \partial D.
             $$ 
             Then,~$v \leq u$ a.\!~e. in~$D$ as well;
      \item  $\liminf\limits_{\eta \rightarrow \xi \atop \eta \in \Omega}u(\eta) \geq \textup{ess}\limsup\limits_{\eta \rightarrow \xi \atop \eta \in \h^n \smallsetminus \Omega}g(\eta)$ for all~$\xi \in \partial \Omega$;
      \item $u \geq  g $ a.\!~e. in~$\h^n \smallsetminus \Omega$.
      \end{enumerate}
      The \textup{lower class} is~$\mathcal{L}_g:=\big\{ u: -u \in \mathcal{U}_{-g}\}$. The function~$\overline{H}_g := \inf\{u : u \in \mathcal{U}_g\}$ is the \textup{upper Perron solution} with boundary datum~$g$ in~$\Omega$ and~$\underline{H}_g := \sup\{u : u \in \mathcal{L}_g\}$ is the \textup{lower Perron solution} with boundary datum~$g$ in~$\Omega$.
      \end{defn}
     
     Some comments about the definition above must be made. First, let us note that by~\rm(i) it follows that~$|u| \neq \infty$ a.~\!e. in~$\h^n$, otherwise it can not be~$h \in L^{p-1}_{sp}(\h^n)$ such that~$u \leq h$ in~$\h^n$. Moreover, the upper class~$\mathcal{U}_g$ contains all lower semicontinuous weak supersolutions satisfying the boundary data.  Analogously the same holds true for weak subsolutions and for the lower class~$\mathcal{L}_g$. 
     
    In Definition~\ref{perron classes and solutions} we have assumed the boundary datum~$g$ in~$W^{s,p}(\h^n)$. In the lemma below we show that we can require less regularity on~$g$ in order to have a not empty upper class~$\mathcal{U}_g$.
     
     \begin{lemma}\label{s4 l1}
     If~$g \in L^{p-1}_{sp}(\h^n) \cap L^\infty(\h^n)$, then the upper class~$\mathcal{U}_g$ is not empty.
     \end{lemma}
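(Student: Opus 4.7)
The strategy is to exhibit a concrete element of $\mathcal{U}_g$. Set $M := \|g\|_{L^\infty(\h^n)}$ and, for a constant $c > 0$ to be chosen, define
\[
u(\xi) := \begin{cases} M+c, & \xi \in \Omega,\\[0.3ex] g(\xi), & \xi \in \h^n \smallsetminus \Omega.\end{cases}
\]
First I would dispatch the easy conditions \textup{(i)}, \textup{(iii)}, \textup{(iv)} directly: $u$ is constant on $\Omega$, hence continuous and in particular lower semicontinuous there; the pointwise bound $u \leq M+c$ together with the finiteness of $\int_{\h^n}(1+|\xi|_{\h^n})^{-Q-sp}\,\dxi$ put the constant $M+c$ in $L^\infty(\h^n) \subset L^{p-1}_{sp}(\h^n)$, giving \textup{(i)}; \textup{(iv)} is obvious from the definition of $u$ outside $\Omega$; and for \textup{(iii)} the boundary inequality $\liminf_{\eta \to \xi,\,\eta \in \Omega} u(\eta) = M+c \geq M \geq \textup{ess}\limsup g$ holds at every $\xi \in \partial \Omega$.

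The main work is condition \textup{(ii)}. I would verify it by showing that $u$ is itself a weak supersolution to~\eqref{problema2} in $\Omega$ and then invoking the comparison principle (Proposition~\ref{comparison a.e.}) against the admissible continuous solution~$v$ on~$D$. To this end, observing that the contribution to $\mathcal{L}u(\xi)$ from integration over $\eta \in \Omega$ vanishes for $\xi \in \Omega$ (since $u$ is constant there), one finds
\[
\mathcal{L}u(\xi) \,=\, \int_{\h^n \smallsetminus \Omega}\frac{(M+c-g(\eta))^{p-1}}{\dd(\eta^{-1}\circ\xi)^{Q+sp}}\,\deta \,\geq\, c^{p-1}\!\int_{\h^n \smallsetminus \Omega} \frac{\deta}{\dd(\eta^{-1}\circ\xi)^{Q+sp}},
\]
having used $|g| \leq M$ to bound $M+c-g(\eta) \geq c$. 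Since $\Omega$ is bounded, picking $R_0 > 0$ so that $\Omega \subset B_{R_0}(\xi)$ (with $R_0$ of the order of $\textup{diam}\,\Omega$) and invoking Proposition~\ref{prop1} together with a polar-coordinate computation analogous to Lemma~\ref{lem1} yields the existence of a positive constant $C_\Omega = C_\Omega(n,s,p,\Lambda,\textup{diam}\,\Omega)$ such that the last integral is bounded below by $C_\Omega$. On the other hand, the monotonicity~\eqref{monotonicity} together with~\eqref{bound f} gives $f(\xi,M+c) \leq f(\xi,M) \leq \|f(\cdot,M)\|_{L^\infty(\Omega)} =: F < \infty$ for a.\,e.\ $\xi \in \Omega$. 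Choosing $c$ so that $c^{p-1} C_\Omega \geq F$, i.\,e.\ $c \geq (F/C_\Omega)^{1/(p-1)}$, forces $\mathcal{L}u \geq f(\cdot,u)$ weakly in $\Omega$; hence $u$ is the desired supersolution and \textup{(ii)} follows via Proposition~\ref{comparison a.e.} applied on each $D \Subset \Omega$.

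The principal difficulty is precisely the supersolution verification: one needs the tail contribution $c^{p-1}\!\int_{\h^n \smallsetminus \Omega}\dd^{-Q-sp}$ to dominate $f(\cdot,u)$ uniformly on $\Omega$, including at points deep inside where the outside tail is smallest. The decisive input is the monotonicity hypothesis~\eqref{monotonicity}: it prevents $f(\cdot, M+c)$ from growing with $c$ and allows the inequality to be closed by a single choice of $c$. Without~\eqref{monotonicity}, raising $c$ could simultaneously inflate the right-hand side and defeat the estimate, which is why the simplest alternative—taking $u$ to be a genuine constant everywhere—fails when $f(\cdot,M)$ is not non-positive.
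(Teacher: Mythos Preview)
Your proposal is correct and follows essentially the same construction as the paper: take $u$ to be a sufficiently large constant on~$\Omega$ and equal to $g$ outside, verify directly that $u$ is a weak supersolution in~$\Omega$, and deduce condition~(ii) from the comparison principle. One small remark: the paper bounds $f(\cdot,u)$ on~$\Omega$ directly via the uniform $L^\infty$ control in~\eqref{bound f} rather than through~\eqref{monotonicity}, so your closing claim that monotonicity is ``decisive'' for this particular lemma somewhat overstates its role here, though invoking it does no harm to the argument.
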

     
     \begin{proof}
     Let us choose a constant~$M>0$ such that
     \begin{equation}\label{s4 l1 1}
     \sup_{\h^n} g +\c \, \|f\|_{L^\infty(\Omega)}^\frac{1}{p-1} \leq M < \infty,
     \end{equation}
     with the constant~$\c$ defined as follows
     $$
     \c^{-\frac{1}{p-1}} := 2 \, \min_{\xi \in \Omega} \int_{\h^n \smallsetminus B_r}\dd(\eta^{-1}\circ \xi)^{-Q-sp}\deta, \qquad \mbox{for } B_r: \Omega \Subset B_r.
     $$
     Note that in~\eqref{s4 l1 1} in the choice of the constant~$M$ we have also taken in consideration the non-homogeneity of the problem given by the presence of the supremum norm of the function~$f$.
     
     Take 
     $$
     u := M \, \chi_\Omega + g \, \chi_{\h^n \smallsetminus \Omega} \in W^{s,p}_{\rm loc}(\Omega)\cap L^{p-1}_{sp}(\h^n).
     $$ 
     Clearly Definition~\ref{perron classes and solutions}\rm(i) and~\rm(iii-iv) are satisfied.   
     In order to prove condition~\rm(ii) we show that~$u$ is a weak supersolution according to Definition~\ref{solution to inhomo pbm}. 
      
     Testing against a nonnegative test function~$\psi \in C^\infty_0(\Omega)$ gives
     \begin{align*}
    \int_{\h^n}\int_{\h^n} & L(u(\xi),u(\eta))(\psi(\xi)-\psi(\eta))\dd(\eta^{-1} \circ \xi)^{-Q-sp}\dxieta\\
     & = 2\int_{\h^n\smallsetminus \Omega}\int_{\Omega}  L(M,u(\eta))\psi(\xi)\dd(\eta^{-1} \circ \xi)^{-Q-sp}\dxieta\\
       & \geq  2\int_{\h^n\smallsetminus B_r}\int_{\Omega}  \c^{p-1} \, \|f\|_{L^\infty(\Omega)}\psi(\xi)\dd(\eta^{-1} \circ \xi)^{-Q-sp}\dxieta\\
       & \geq \int_{\Omega}f(\xi,u)\psi(\xi)\dxi.
     \end{align*}
      Hence,~$u$ is a weak supersolution in~$\Omega$ to~\eqref{problema2}. Thus, it also satisfies the comparison principle in~$\Omega$. Then,~$u \in \mathcal{U}_g$.
     \end{proof}
     
     \subsection{The Perron solutions}
     In this section we prove Theorem~\ref{perron}. 
     
     The first result is proving that our method is consistent. Indeed, let us show that if problem~\eqref{problema2} admits a weak solution~$h_g$ in~$\Omega$, then 
     $$
     \overline{H}_g = h_g = \underline{H}_g.
     $$
     \begin{lemma}\label{s4 l2}
     	Let us assume that~$h_g \in C(\overline{\Omega})$ is a weak solution in~$\Omega$ to~\eqref{problema2} such that
     	$$
     	\lim_{\eta \rightarrow \xi \atop \eta \in \Omega}h_g(\eta)=g(\xi) \ \mbox{for every} \ \xi \in \partial \Omega \qquad \mbox{and} \qquad h_g=g \ \mbox{a.~\!e. in } \h^n \smallsetminus \Omega,
     	$$
     	for some~$g \in C(\Omega') \cap L^{p-1}_{sp}(\h^n)$ with~$\Omega \Subset \Omega'$. Then,~$\overline{H}_g =h_g=\underline{H}_g$.
     \end{lemma}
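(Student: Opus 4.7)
The plan is to verify that $h_g$ belongs simultaneously to the upper class $\mathcal{U}_g$ and to the lower class $\mathcal{L}_g$. Since by definition $\overline{H}_g = \inf \mathcal{U}_g$ and $\underline{H}_g = \sup \mathcal{L}_g$, this will immediately yield $\overline{H}_g \leq h_g \leq \underline{H}_g$, and the equality chain $\overline{H}_g = h_g = \underline{H}_g$ will follow once the Perron bracketing $\underline{H}_g \leq \overline{H}_g$ is in hand.

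For $h_g \in \mathcal{U}_g$ I would check the four conditions of Definition~\ref{perron classes and solutions} in order. Condition~(i) is immediate: $h_g \in C(\overline{\Omega})$ is in particular lower semicontinuous in $\Omega$, and the function $h := \|h_g\|_{L^\infty(\Omega)} \chi_\Omega + |g|\chi_{\h^n \smallsetminus \Omega}$ is a pointwise majorant of $h_g$ lying in $L^{p-1}_{sp}(\h^n)$, since $g$ does. Condition~(iii) follows from the prescribed limit $\lim_{\eta \to \xi} h_g(\eta) = g(\xi)$ combined with the continuity of $g$ on $\Omega'$, which together force $\liminf_{\eta \to \xi,\,\eta \in \Omega} h_g(\eta) = g(\xi) = \textup{ess}\limsup_{\eta \to \xi,\,\eta \in \h^n \smallsetminus \Omega} g(\eta)$ for every $\xi \in \partial \Omega$. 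Condition~(iv) is evident from $h_g = g$ a.e.\ in $\h^n \smallsetminus \Omega$.

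The crucial step is~(ii). Given $D \Subset \Omega$ and $v \in C(\overline{D})$ a weak solution in $D$ with $v \leq h_g$ a.e.\ in $\h^n \smallsetminus D$ and $v(\xi) \leq \liminf_{\eta \to \xi} h_g(\eta)$ for every $\xi \in \partial D$, I would observe that $h_g$ is itself a weak solution in $\Omega \supset D$ and, as such, belongs to $W^{s,p}_{\rm loc}(\Omega) \subset W^{s,p}(D)$; I would then invoke Proposition~\ref{comparison a.e.} on the domain $D$, treating $v$ as weak subsolution and $h_g$ as weak supersolution. The continuity of $v$ gives $\limsup_{\eta \to \xi} v(\eta) = v(\xi)$, while that of $h_g$ gives $\liminf_{\eta \to \xi} h_g(\eta) = h_g(\xi)$ for $\xi \in \partial D \subset \Omega$, so the boundary hypothesis of Proposition~\ref{comparison a.e.} is met and yields $v \leq h_g$ a.e.\ in $D$, confirming~(ii). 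An analogous argument with reversed roles of sub-/supersolutions (equivalently, checking that $-h_g \in \mathcal{U}_{-g}$, where the transformed nonlinearity $\tilde f(\xi,x):=-f(\xi,-x)$ still satisfies \eqref{reg f}--\eqref{monotonicity}) gives $h_g \in \mathcal{L}_g$.

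The main obstacle is the reverse inequality $\underline{H}_g \leq \overline{H}_g$, i.e., the Perron bracketing $w \leq u$ a.e.\ in $\h^n$ for every $u \in \mathcal{U}_g$ and $w \in \mathcal{L}_g$. Outside $\Omega$ it is immediate from $w \leq g \leq u$ (conditions~(iv) of the two classes), and on $\partial \Omega$ the semicontinuity/essential-limit conditions for $u$ and $w$ match up through $g$; inside $\Omega$ I would establish it via an exhaustion $D_k \Subset \Omega$ with $D_k \nearrow \Omega$, at each stage applying condition~(ii) of $\mathcal{U}_g$ (or the mirror one for $\mathcal{L}_g$) together with Proposition~\ref{comparison a.e.} and then passing to the limit. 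The monotonicity hypothesis~\eqref{monotonicity} is indispensable here, as it is precisely what makes Proposition~\ref{comparison a.e.} applicable to the non-homogeneous operator $\mathbf{A}$ at every step. Combining this bracketing with the sandwich $\overline{H}_g \leq h_g \leq \underline{H}_g$ obtained above yields $\overline{H}_g = h_g = \underline{H}_g$.
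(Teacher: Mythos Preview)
Your verification that $h_g\in\mathcal{U}_g\cap\mathcal{L}_g$ is correct and matches the paper's first step. The divergence is in how you reach the reverse inequalities. The paper does \emph{not} appeal to the general Perron bracketing $\underline{H}_g\leq\overline{H}_g$; instead it uses property~(ii) of the upper class the other way around, with $h_g$ playing the role of the weak solution~$v$. Concretely, given any $u\in\mathcal{U}_g$ and $\e>0$, the boundary behaviour of $h_g$ together with conditions~(iii)--(iv) for $u$ produces an open set $D\Subset\Omega$ with $h_g\leq u+\e$ a.e.\ in $\h^n\smallsetminus D$ and $h_g(\xi)\leq\liminf_{\eta\to\xi}u(\eta)+\e$ on $\partial D$; property~(ii) then yields $h_g\leq u+\e$ a.e.\ in $D$, and letting $\e\downarrow 0$ and taking the infimum over $u$ gives $h_g\leq\overline{H}_g$. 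This is self-contained and avoids the heavier Poisson-modification machinery.

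Your route is in principle valid provided the bracketing $\underline{H}_g\leq\overline{H}_g$ is available, but the sketch you give for it does not work as written. Property~(ii) only allows comparison of $u\in\mathcal{U}_g$ against a continuous \emph{weak solution} $v$ on $D$, and generic members $w\in\mathcal{L}_g$ are not weak solutions, so an exhaustion using~(ii) alone cannot force $w\leq u$. In the paper the bracketing is obtained only after Proposition~\ref{s4 p2} (both Perron solutions are continuous weak solutions), at which point Proposition~\ref{comparison a.e.} applies directly to $\overline{H}_g$ and $\underline{H}_g$; invoking that chain here makes your proof of the lemma rest on essentially the full proof of Theorem~\ref{perron}, whereas the paper's argument for the lemma is elementary and logically prior to that development.
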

     \begin{proof}
     	We prove the lemma for the upper solution~$\overline{H}_g$ being the other case symmetric. Clearly~$h_g$ belongs to~$\mathcal{U}_g$. Hence,~$h_g \geq \overline{H}_g$. To show that the reverse inequality holds true let~$u \in \mathcal{U}_g$. Then, for any~$\e>0$ there exists an open set~$D \Subset \Omega$ such that~$u+\e > h_g$ almost everywhere in~$\h^n \smallsetminus D$ and $h_g \leq \liminf_{\eta \rightarrow \xi}u(\eta) +\e$ for any $\xi \in \partial D$. Hence, by comparison principle we have that~$u +\e \geq h_g$ almost everywhere in~$D$ as well. Taking the infimum over~$\e>0$ and then over~$u \in \mathcal{U}_g$ we have that~$\overline{H}_g \geq h_g$ for a.~\!e.~$\xi \in \Omega$.  By the continuity
     	of $h_g$ and $\overline{H}_g$ (Proposition~\ref{s4 p2} below) we have $\overline{H}_g \geq h_g$ for any $\xi \in \Omega$.
     \end{proof}
     
     In order to prove Theorem~\ref{perron}, we have to introduce the {\it Poisson modification} of a function~$u$ in a set~$D$.

     \begin{defn}[{\bf Poisson modification of a continuous function}]\label{poisson mod}
     Let us consider a set~$D \Subset \Omega$ such that~$\h^n \smallsetminus D$ satisfies the measure density condition~\eqref{condizione misura} and let~$u \in C(\overline{D}) \cap L^{p-1}_{sp}(\h^n)$. We call \textup{Poisson modification} of~$u$ in the domain~$D$ the solution to the obstacle problem in~$D$ with boundary datum~$u$, i.~\!e.~$P_{u,D} \in \mathcal{K}_{u,-\infty}(D,\Omega) \cap C(\overline{D})$.
     \end{defn}
     
     Note that by Corollary~\ref{sec3.1_corol1}, we have that~$P_{u,D}$ is the weak solution of the following problem
     \begin{equation}\label{poisson modification}
     \begin{cases}
     \l w = f , \ & \mbox{in}\ D,\\
     w= u,  \ & \mbox{in}\ \h^n \smallsetminus D.
     \end{cases}
     \end{equation}
     
     By a density argument we extend the Definition~\ref{poisson mod} to all functions belonging to~$\mathcal{U}_g$.
     
     \begin{prop}\label{s4 p1}
     Let~$u \in \mathcal{U}_g$. Let~$D \Subset \Omega$ be an open subset of~$\Omega$ such that~$\h^n \smallsetminus D$ satisfies the measure density condition~\eqref{condizione misura}.
     Then, the Poisson modification of~$u$ in~$D$ satisfies 
     \begin{enumerate}[\rm(i)]
      \item $P_{u,D} \leq u$;
      \item $P_{u,D}$ is a continuous weak solution to~\eqref{problema2} in~$D$;
      \item $P_{u,D} \in \mathcal{U}_g$.
     \end{enumerate}
     \end{prop}
     
     \begin{proof}
      Applying the same procedure in~\cite[Lemma~9]{KKP17} we can construct an increasing sequence~$\{u_j\}$, $u_j \in C(\overline{D})$, of weak supersolutions to~\eqref{problema2} in~$D$, converging to~$u$ pointwise in~$\h^n$.
      Clearly by Definition~\ref{perron classes and solutions}\rm(i) we have~$u_j \leq u \leq h$, with~$h \in L^{p-1}_{sp}(\h^n)$.
      
      Define the Poisson modification of the function~$u_j$ in the set~$D$,~$P_{u_j,D} \in \mathcal{K}_{u_j,-\infty}(D,\Omega) \cap C(\overline{D})$ and denote with
      $$
      P_{u,D} := \lim_{j \rightarrow \infty}P_{u_j,D}.
      $$
     As consequence of the comparison principle in Proposition~\ref{comparison a.e.} and the fact that the sequence~$\{u_j\}$ is increasing we have that~$\{P_{u_j,D}\}$ is increasing as well and~$P_{u_j,D} \leq u$. 
     
     Hence, the pointwise limit
     $$
     P_{u,D}(\xi) = \lim_{j \rightarrow \infty}P_{u_j,D}(\xi),
     $$
     exists and satisfies~$P_{u,D} \leq u$. Moreover, the pointwise limit is lower semicontinuous. 
     
     Since the sequence~$\{P_{u_j,D}\}$ is increasing and bounded above by~$u$, we have that for any~$j \in \mathbb{N}$, $P_{u_1,D} \leq P_{u_j,D} \leq h$, with~$P_{u_1,D},h \in L^{p-1}_{sp}(\h^n)$. Thus, since is the limit of an increasing sequence of weak solutions in~$D$, by Corollary~\ref{corol sequence weak solutions}, we obtain that~$P_{u,D}$ is a continuous weak solution in~$D$. Hence, Proposition~\ref{s4 p1}\rm(i-ii) follow.
     
     We have to prove that~$P_{u,D} \in \mathcal{U}_g$. Note that we only have to prove Definition~\ref{perron classes and solutions}\rm(ii). Take an open set~$U \Subset \Omega$ and consider a weak solution~$v \in C(\overline{U})$ to problem~\eqref{problema2} such that~$v \leq P_{u,D}$ almost everywhere in~$\h^n \smallsetminus U$ and 
    $$
    v(\xi) \leq \liminf_{\eta \rightarrow \xi}P_{u,D}(\eta), \qquad \forall \xi \in \partial U.
    $$
    We claim that~$v \leq P_{u,D}$ almost everywhere in~$U$. Since~$P_{u,D} \leq u$ we have that~$v \leq u$ a.\!~e. in~$\h^n \smallsetminus U$ and 
     $$
    v(\xi) \leq \liminf_{\eta \rightarrow \xi}P_{u,D}(\eta) \leq \liminf_{\eta \rightarrow \xi}u(\eta), \qquad \forall \xi \in \partial U.
    $$
    Hence, $v \leq u$ almost everywhere in~$U$ as well. Now, if $U \cap D = \emptyset$ we are done since~$P_{u,D}\equiv u$ outside~$D$. Assume~$U \cap D \neq \emptyset$. We have that~$P_{u,D}$ is a weak solution in~$U \cap D$. In the set~$\h^n \smallsetminus U$, it holds $v \leq P_{u,D}$ almost everywhere and in~$U \cap (\h^n \smallsetminus D)$ we have that~$v \leq u \equiv P_{u,D}$ as shown above.
    
    Hence, in~$\h^n \smallsetminus (U \cap D)$ we have that~$v \leq P_{u,D}$ almost everywhere and on the boundary it holds
     $$
     \left\{
     \begin{array}{lr}
     v(\xi) \leq \liminf\limits_{\eta \rightarrow \xi \atop \eta \in  U \cap D}P_{u,D}(\eta), \qquad & \mbox{when}\	 \xi \in \partial U \cap D,\\
     v(\xi) \leq u(\xi) \leq \liminf\limits_{\eta \rightarrow \xi \atop \eta \in U \cap D }P_{u,D}(\eta), \qquad & \mbox{when}\	  \xi \in  U \cap \partial D.
     \end{array}
     \right.
    $$
   Hence by the comparison principle of Proposition~\ref{comparison a.e.} it follows that~$v \leq P_{u,D}$ a.\!~e. in~$U \cap D$ as well.
   \end{proof}
    
   Using Poisson modification and the H\"older continuity result of Theorem~\ref{teo_holder} by repeating the procedure used in~\cite{LL16}, we can prove the following proposition.
     
    \begin{prop}\label{s4 p2}
    The upper and lower Perron solutions~$\overline{H}_g$ and~$\underline{H}_g$ are continuous weak solution to~\eqref{problema2} in~$\Omega$, according to Definition~\ref{solution to inhomo pbm}.
    \end{prop}

  Finally we can prove our last main result.
  \begin{proof}[Proof of Theorem~{\rm \ref{perron}}]
  We divide the proof into two parts. Firstly we show that
  \begin{equation}\label{perron 1}
  \overline{H}_g \, \geq \, \underline{H}_g.
  \end{equation} 
  Let us choose~$\e>0$ and consider two function~$u \in \mathcal{U}_g$ and~$v \in \mathcal{L}_g$ such that
  $$
  \overline{H}_g + \e \geq u, \qquad \mbox{and} \qquad \underline{H}_g - \e \leq v.
  $$
  By Definition~\ref{perron classes and solutions}\rm(iii) we have that
  $$
  \liminf_{\eta \rightarrow \xi \atop \eta \in \Omega }\overline{H}_g(\eta)
   \geq \limsup_{\eta \rightarrow  \xi \atop \eta \in  \Omega}\underline{H}_g(\eta), \qquad \mbox{for all}~\xi \in \partial \Omega.
  $$
  Hence, since by Proposition~\ref{s4 p2}, both Perron solutions are weak solutions in~$\Omega$ to~\ref{problema2} and~$\overline{H}_g \geq \underline{H}_g$ a.~\!e. in~$\h^n \smallsetminus \Omega$,  we have that the desired inequality~\eqref{perron 1} follows by the comparison principle of Proposition~\ref{comparison a.e.}.
  
  We are left to prove the reverse inequality of~\eqref{perron 1} in~$\Omega$. With no loss of generality we may assume that the function~$g < \infty$ almost everywhere in~$\Omega$. Then, let us consider the solution to the obstacle problem~$u \in \mathcal{K}_{g,g}(\Omega,\Omega')$, with~$\Omega \Subset \Omega'$. Recalling that we assume the boundary datum~$g$ belonging to~$W^{s,p}(\h^n)$ we have that, by Theorem~\ref{t1 obst existence},~$u$ is a weak supersolution in~$\Omega$ and~$u \in W^{s,p}(\h^n)$. Hence,~$u \in \mathcal{U}_g$.

  Let us consider an exhaustion of~$\Omega$ with open sets~$\{D_j\}$:
  $$
  D_1 \, \subset \, D_2 \, \subset \, \dots \, \subset \, \Omega \, = \, \bigcup_{j=1}^\infty D_j,
  $$
  such that~$\h^n \smallsetminus D_j$ satisfies the measure density condition~\eqref{condizione misura}, for any~$j \in \mathbb{N}$, and take the sequence~$\{P_{u,D_j}\}$ of the Poisson modifications of~$u$ in~$\{D_j\}$.
  
  Clearly, by Proposition~\ref{s4 p1}, we have that~$P_{u,D_j}$ is a weak solution in~$D_j$, $P_{u,D_j} \equiv g$ in~$\h^n \smallsetminus \Omega$ and~$P_{u,D_j} \in \mathcal{U}_g$.
  
  Moreover,~$\{P_{u,D_j}\}$ is a non-increasing sequence by the comparison principle of Proposition~\ref{comparison a.e.} and
  $$
  u \geq P_{u,D_1} \geq P_{u,D_2} \geq \dots \atop
  \|u\|_{L^p(\h^n)} \geq \|P_{u,D_1}\|_{L^p(\h^n)} \geq \|P_{u,D_2}\|_{L^p(\h^n)} \geq \dots
  $$
  Hence, the pointwise limit~$P_{u,\Omega}=\lim_jP_{u,D_j}$ does exist almost everywhere and belongs to~$L^p(\h^n)$.
  
 Then, by in Definition~\ref{perron classes and solutions}\rm(i) we have that~$ P_{u,\Omega} \leq P_{u,D_j} \leq h $ with~$P_{u,\Omega},h \in L^{p-1}_{sp}(\h^n)$. By Corollary~\ref{corol sequence weak solutions} it follows that $P_{u,\Omega}$~is a weak solution in~$\Omega$ to~\eqref{problema2}. 
 
 Let us note that the function~$P_{u,\Omega}$ may not belong to the upper class~$\mathcal{U}_g$. Nevertheless, since~$P_{u,D_j} \in \mathcal{U}_g$, we have that for any~$\xi \in \h^n$
 $$
 \overline{H}_g (\xi) \leq P_{u,D_j}(\xi), \qquad   \overline{H}_g (\xi) \leq \lim_{j \rightarrow \infty}P_{u,D_j}(\xi) =P_{u,\Omega}(\xi).
 $$ 
 Thus, $\overline{H}_g \leq P_{u,\Omega}$.
 
 Repeating the same argument for the lower Perron solution we have that~$\underline{H}_g \geq P_{u,\Omega}$. Hence we have show that~$\overline{H}_g \leq \underline{H}_g$.
  
 We have that~$\overline{H}_g =\underline{H}_g =H_g$ and the proof ends applying Proposition~\ref{s4 p2}.
 \end{proof}

%
%

\vspace{5mm}

\end{document}